\documentclass[a4paper,11pt]{article}
\title {Tesis Szyld}

\usepackage{hyperref}
\usepackage[latin1]{inputenc}
\usepackage[spanish]{babel}
\usepackage{color}
\usepackage{verbatim}
\usepackage{amsmath, amsthm, amsfonts, amssymb}
\usepackage[all]{xy}
\usepackage{graphicx}

	\def \A{\mathbb{A}}
	\def \C{\mathbb{C}}
	\def \T{\mathbb{T}}
	\def \Cat{\mathcal{C}}
	\def \Dat{\mathcal{D}}
	\def \Bat{\mathcal{B}}
	\def \Xat{\mathcal{X}}
	\def \Rat{\mathcal{R}}
	\def \Vat{\mathcal{V}}
	\def \Ens{\mathcal{E}ns}
	\def \G{\mathbb{G}}
	\def \Gr{\mathbb{G}r}

	\def \Z{\mathbb{Z}}
	
	\def \be{\begin{enumerate}}
	\def \en{\end{enumerate}}

\def \de{definition}
\def \te{theorem}
\def \prop{proposition}
\def \ej {example}

\newcommand{\mr}[1]{\buildrel {#1} \over \longrightarrow}

\newcommand{\dr}[1]
          {
           \ar@<4pt>@{-}'+<0pt,-6pt>[d] 
           \ar@<-4pt>@{-}'+<0pt,-6pt>[d]^{#1}
          }

\newcommand{\dd}{\ar@2{-}[d]}

\newcommand{\op}[1]
          {
           \ar@{-}[ld] 
           \ar@{-}[rd] 
           \ar@{}[d]|{#1}  
          }

\newcommand{\cl}[1]
          { 
           \ar@{-}[ur] 
           \ar@{}[u]|{#1} 
           \ar@{-}[ul] 
          }

\newcommand{\ig}[1]
          {
           \ \ \ \ar@{}[d]|{\stackrel{#1}{=}}
          }

\newcommand{\dcell}[1]
          {
           \ar@<4pt>@{-}'+<0pt,-6pt>[d] 
           \ar@<-4pt>@{-}'+<0pt,-6pt>[d]^{#1}
          }

\newcommand{\did}       
         {
          \ar@{=}[d]
         }

\newcommand{\pmr}[2]
{
\xymatrix@C=5ex@R=2.4ex
         {
          {} \ar@<1.6ex>[r]^{#1} 
	     \ar@<-1.1ex>[r]^{#2} & {}
         }
}

\newcommand{\pml}[2]
{
\xymatrix@C=5ex@R=2.4ex
         {
            {} 
          & {} \ar@<1.0ex>[l]_{#1} 
	       \ar@<-1.7ex>[l]_{#2}
         }
}

\newcommand{\cellr}[3]
{
\xymatrix@C=7ex@R=2.4ex
         {
          {} \ar@<1.6ex>[r]^{#1} 
          \ar@{}@<-1.3ex>[r]^{\!\! #2 \, \!\Downarrow}
                                         \ar@<-1.1ex>[r]_{#3} & {}
         }
}

\newcommand{\celll}[3]
{
\xymatrix@C=7ex@R=2.4ex
         {
            {} 
          & {} \ar@<1.0ex>[l]^{#1} 
          \ar@{}@<-1.7ex>[l]^{\!\! #2 \, \!\Downarrow}
	                                 \ar@<-1.7ex>[l]_{#3}
         }
}

\newcommand{\cc}{\mathcal}

\theoremstyle{plain}
		\newtheorem{theorem}{Teo}[section]
		\newtheorem{lemma}[theorem]{Lema}
\theoremstyle{definition}
		\newtheorem{remark}[theorem]{Obs}
	
\numberwithin{equation}{section}	
		
\pagestyle{plain}

\begin{document}

\newcommand{\colim}{\mathop{\hbox{colím}}}

\begin{center}

\textbf{On Tannaka duality}

\vspace{3ex}

Tesis de Licenciatura\footnote{Departamento de Matematica, F.C.E. y
  N., University of Buenos Aires.} - December 23, 2009

\vspace{3ex}

\textit{Martín Szyld}

\vspace{7ex}

\textbf{Abstract}
\end{center}

\footnotesize
The purpose of this work is twofold: to expose the existing
similarities 
between the generalizations of the Tannaka and Galois theories,  and
on the other hand, to develop in detail our own treatment of part of
the 
content of Joyal and Street \cite{JS} paper, generalizing from vector
spaces to an abstract tensor category. We also develop in detail the
proof of the Tannaka equivalence of categories in the case of vector
spaces.  

Saavedra Rivano \cite{Saavedra}, Deligne and Milne \cite{DM}
generalize classical Tannaka theory to the context of $K$-linear
tensor (or monoidal)  categories.  They  obtain a 
lifting-equivalence into a category of ``group representations'' for a finite-dimensional vector space valued monoidal functor. 
This lifting theorem is similar to the one of Grothendieck Galois theory \cite{SGA} for a finite sets valued functor. On the other hand, Joyal and Street \cite{JS} work on the algebraic side of the 
duality between algebra and geometry, and also obtain a lifting-equivalence, but now to the category of finite-dimensional comodules over a Hopf algebra.

In this work we follow the ideas of  Joyal and Street and prove their lifting-equivalence, and then we show how it corresponds, by dualizing, to the ones of Saavedra Rivano \cite{Saavedra} and Deligne and Milne \cite{DM}.

Text is in spanish, but a brief english introduction is provided.
\normalsize

\vspace{7ex}

\begin{center}
\textbf{Abbreviated Introduction}
\end{center}

\vspace{1ex}

Consider the following Galois context, as developed in \cite {SGA}:

$$\xymatrix { & c\Ens_{<\infty}^{\pi} \ar[d]^{U} \\
				\Cat \ar[ur]^{\tilde{F}} \ar[r]^{F} & \Ens_{<\infty}. }$$

Starting with a category $\Cat$ and a functor $F: \Cat \rightarrow
\Ens_{<\infty}$ into the category of finite sets, Grothendieck constructs a lifting $\tilde{F}: \Cat \rightarrow c\Ens_{<\infty}^{\pi}$, into the category of continuous finite actions of the pro-finite group $\pi = Aut(F)$. He then proves a theorem where he gives conditions for $\Cat$ and $F$ under which $\tilde{F}$ is an equivalence of categories.

\vspace{1ex}

We now turn to the development of modern Tannaka theory. Broadly
speaking, starting with a category $\Cat$ and a functor $F: \Cat
\rightarrow Vec_K^{<\infty}$ into the category of finite dimensional
vector spaces, we construct a lifting and obtain conditions under
which this lifting is an equivalence of categories. This lifting can
be into the category of representations of an (algebraic group) (as in \cite{Saavedra}
and \cite{DM}), or into the category of comodules of a Hopf algebra (as in \cite{JS}). The theorems that specify this conditions are \cite{Saavedra}, p.198, 3.2.2.3, \cite{DM}, p.130, Theorem 2.11 and \cite{JS}, p. 456, Theorem 3.

We consider a  Tannaka context which consists of a pair $(\cc{V}_0,
\cc{V})$, where $\cc{V}$ is a monoidal closed category, and  $\cc{V}_0
\subset \cc{V}$ is a small category of objects with duals, and a
functor $\cc{C} \mr{F} \cc{V}_0$.
$$\xymatrix { & Comod_{<\infty}(End^\lor(F)) \ar[d]^{U} \\
				\Cat \ar[ur]^{\tilde{F}} \ar[r]^{F} &
                                \cc{V}_0. }$$

Starting with $\Cat$ and $F$, a coalgebra  $End^\lor(F)$ (which
classifies endomorphisms of the functor F) and a lifting $\tilde{F}$
are constructed, and conditions are given under which $End^\lor(F)$ is
a Hopf algebra in $\cc{V}$. Then, in the particular case where $\cc{V} = Vec_K$
and $\cc{V}_0 = Vec_K^{<\infty}$,  if $\cc{C}$ is abelian, and $F$ is
exact, $\tilde{F}$ an equivalence of categories. Throughout this
thesis we develop all this, and show explicitely in the last section
how the approach in \cite{Saavedra}
and \cite{DM} is equivalent to the one in \cite{JS}.

\vspace{1ex}

The content of this thesis can be split into two: sections \ref{sec:CatTens}-\ref{sec:Ends} and sections \ref{sec:NatPredual}-\ref{sec:OtroLadoDualidad}. The first four sections are mainly introductory, since we develop there 
the concepts that we will use in the other four. 

\begin{itemize}
	\item In section \ref{sec:CatTens} we define the basic concepts of tensor categories. It is within them that modern Tannaka thoery is developed, so this definitions are fundamental for our work. We also develop here the \textit{``elevators calculus''} invented by E.Dubuc, which is the tool that will allow us to do calculations in these categories. We prove a simple coherence theorem for symmetric tensor categories, which is a particular case of the one of \cite{JS1992} for braided categories, that justifies the use of this \textit{calculus}.
	\item In section \ref{sec:Dualidad} we define the important concept of dual pairing. In \cite{JS} and in \cite{DM} the authors use two (a priori) different definitions of duality, so we expose both and establish relations between them. We prove that they are equivalent, but in the process we also obtain a formulation of the concept of rigid category (of \cite{DM}) equivalent to the usual one but with fewer axioms and valid also in the non-symmetric case. 
	\item In section \ref{sec:DefAlg} we define the different algebraic structures that we'll consider, in an abstract tensor category $\Vat$. We will work with functors $\Cat \rightarrow \Vat$, where $\Vat$ plays the role of $Vec$, so as usual, definitions and proofs are given in terms of arrows of $\Vat$, and not elements.
	\item Finally, in section \ref{sec:Ends} we introduce the ends and coends that, even though they are not used explicitely in \cite{JS}, give a simple and convenient way of defining the object $Nat^\lor(F,G)$ and working with it.
\end{itemize}

In the last four sections is where we actually develop Tannaka theory in an abstract setting, and compare it with Galois theory as we explained before. The following observations should also be made concerning this development:

\begin{itemize}
	\item In section \ref{sec:NatPredual} we give Joyal's definition of $Nat^\lor(F,G)$, though properly generalized to the abstract case, and without assuming that $\Vat$ is symmetric. We obtain the coalgebra structure of $End^\lor(F) := Nat^\lor(F,F)$ as a consequence of the $\Vat$-cocategory structure for the functors $\Cat \rightarrow \Vat$.
	\item In section \ref{sec:EndPredual} we focus on the coalgebra $End^\lor(F)$ and show under which conditions it is a bialgebra and a Hopf algebra. We also show how the lifting $\Cat \stackrel{\tilde{F}}{\rightarrow} Comod_0(End^\lor(F))$ into the category of comodules is constructed from $\Cat \stackrel{F}{\rightarrow} \Vat$.
	\item In section \ref{sec:Vec} we show that in the $\Vat = Vec_K$ case, under certain additional hypothesis this lifting is an equivalence of categories.
	\item To compare this Tannaka theory with Galois theory, in section \ref{sec:OtroLadoDualidad} we show in detail how the Hopf algebra structure of $End^\lor(F)$ corresponds to a group object structure in the category dual of the category of algebras, and that the representations also correspond. This way, we express the Tannaka equivalence theorem in a geometric Galois lenguage.
\end{itemize}

We state explicitally the following theorem (\ref{teofinal}, the last result of the thesis) that shows this phenomenon.

\vspace{2ex}

\noindent \textbf{Theorem.} Let $\Cat$ be a tensor category with a right duality, $\Vat$ a symmetric cocomplete tensor category with internal Hom, $\Vat_0$ a full subcategory of $\Vat$ in which every object has a dual object, and $F: \Cat \rightarrow \Vat_0$ a tensor functor. Then we have the lifting
	
$$\xymatrix {  & Rep_0(G) \ar[d]^{U} \\
							{\Cat} \ar[ur]^{\tilde{F}} \ar[r]^{F} & \Vat_0, }$$
							

	where $G:=Spec(End^\lor(F))$ plays the role of the group $Aut(F)$ of Galois theory.
	
	In addition, in the case $\Vat = Vec_K$, if $\Cat$ is abelian and $F$ is exact and faithful, then $\tilde{F}$ is an equivalence of categories.

\pagebreak

\small
\tableofcontents
\normalsize

\pagebreak

\pagestyle{plain}

\section{Introducción}

El objetivo principal de esta tesis es exponer las relaciones existentes entre las generalizaciones de las teorías de Tannaka y de Galois, con el objetivo a futuro de ensayar una teoría que generalice ambas.

Para ello expondremos, en esta introducción, en primer lugar los enunciados principales de la teoría de Galois de Grothendieck como está desa- \linebreak rrollada en \cite{SGA} y \cite{Dubuc}, luego la teoría clásica de Tannaka como está explicada en \cite{JS} y, por último, la teoría moderna de Tannaka según sus dos vertientes principales, la de \cite{DM} y la de \cite{JS}.

Como mostraremos a continuación, la relación entre la teoría moderna de Tannaka en su versión \cite{JS} y la teoría de Galois de Grothendieck no es tan directa como la de la versión \cite{DM}. En el cuerpo principal de esta tesis, trabajaremos siguiendo las definiciones de \cite{JS}, pero al final de la misma mostraremos cómo nuestros resultados se pueden traducir también al lenguaje de \cite{DM}, más cercano al de la teoría de Galois de Grothendieck. Esto está explicitado con mayor detalle en la sección \ref{sub:contenido}.

Por dudas sobre la notación al leer esta introducción, referirse primero a la sección \ref{notac}.

\subsection{La generalización de Grothendieck de la teoría de Galois} \label{sec:introGal}

En el SGA I \cite{SGA}, Grothendieck desarrolla una teoría que tiene como caso particular la correspondencia de Galois entre subextensiones de una extensión galoisiana $L/K$ y subgrupos del grupo de Galois $G(L/K)$. Más precisamente, a partir de una categoría $\Cat$ y de un funtor $F: \Cat \rightarrow \Ens_{<\infty}$ Grothendieck realiza la contrucción de un funtor \linebreak $\tilde{F}: \Cat \rightarrow c\Ens_{<\infty}^{\pi}$, de $\Cat$ en la categoría de acciones continuas finitas del grupo pro-finito $\pi = Aut(F)$ al que se le da una topología conveniente, y demuestra un teorema en el que da condiciones que deben cumplir $\Cat$ y $F$ para que $\tilde{F}$ sea una equivalencia de categorías. Notemos que una tal acción es un morfismo $\pi \rightarrow Aut(S)$, tomando en $Aut(S)$ la topología discreta.

Grothendieck demuestra el teorema pasando al límite ciertas igualdades que obtiene para algunos objetos de $\Cat$ a los que denomina galoisianos, y que bajo las hipótesis del teorema son cofinales en el diagrama de $F$. Podemos pensar como ejemplo inspirador que $\Cat$ es la categoría (dual) de todas las subextensiones galoisianas finitas de una clausura algebraica $\bar{K}/K$. Luego, como se ve al estudiar teoría de Galois de cuerpos, tenemos correspondencias de Galois para las subextensiones galoisianas $L/K$, y estas subextensiones son cofinales en el diagrama de todas las subextensiones de la clausura algebraica. Luego tenemos dos tipos de extensiones distintas, para las galoisianas podemos demostrar directamente una correspondencia entre todas sus subextensiones y los subgrupos de su grupo de Galois, mientras que para demostrar todas estas correspondencias a la vez tenemos que pasarlas al límite. 

En \cite{Dubuc}, Dubuc y S. de la Vega ponen de manifiesto esta diferencia estableciendo los axiomas correspondientes al caso galoisiano (al que llaman \textit{representable connected}), y obteniendo a partir de este, mediante un paso al límite, otro caso (al que llaman \textit{connected}) y el caso de Grothendieck. 


Podemos pensar gráficamente el teorema de \cite {SGA}, mirando el siguiente diagrama.
\begin{equation} \label{diagramaGalois}
\xymatrix { & c\Ens_{<\infty}^{\pi} \ar[d]^{U} \\
				\Cat \ar[ur]^{\tilde{F}} \ar[r]^{F} & \Ens_{<\infty} }
\end{equation}				

En este sentido, podemos decir que $\tilde{F}$ es un "`levantamiento"' de $F$. Luego podemos pensar en dos enunciados distintos, a los que más adelante les buscaremos equivalentes formales en la teoría de Tannaka.
\begin{enumerate}
	\item A partir de un grupo profinito $\pi$ (por definición esto es un límite cofiltrante de grupos finitos discretos con la topología límite) y del funtor de olvido $c\Ens_{<\infty}^{\pi} \stackrel{U}{\rightarrow} \Ens_{<\infty}$ se obtiene un morfismo $\pi \rightarrow Aut(U)$ que resulta un isomorfismo 
	\item A partir de una categoría $\Cat$ y un funtor $F: \Cat \rightarrow \Ens_{<\infty}$, podemos dar condiciones que deben cumplir $\Cat$ y $F$ para que el levantado $\tilde{F}$ sea una equivalencia de categorías.
\end{enumerate}

\subsection{La generalización de Joyal-Street de la teoría de Tannaka} \label{sub:introJoyal}

La teoría de la dualidad de Tannaka comienza en \cite{Tannaka}, donde se recupera un grupo topológico compacto a partir de sus representaciones. La descripción de la misma que daremos es la de \cite{JS}, cap.1, pues tiene un enfoque más cercano a lo que desarrollaremos en este trabajo. Lo primero que haremos para ello es recordar algunas nociones básicas de representaciones de grupos.

\subsubsection*{Representaciones de Grupos}


Sean $G$ un grupo, y $\A$ una categoría. Por definición, una representación de $G$ en $\A$ es elegir un objeto $A$ de $\A$, y elegir para cada elemento de $G$ un endomorfismo de $A$ de forma tal que al producto de dos elementos de $\G$ le corresponda la composición de los endomorfismos correspondientes. Observemos que para cualquier elemento $g \in G$, $F(g)$ debe ser un automorfismo de $A$, pues su inversa es $F(g^{-1})$. Luego, otra forma de expresar esto es decir que una representación de $G$ en un objeto $A$ (de una categoría prefijada $\A$) es un morfismo de grupos $F:G \longrightarrow Aut(A)$.

\begin{remark} \label{represcomofuntor}
	Si se interpreta al grupo $G$ como una categoría $\G$ (con un solo objeto y una flecha por cada elemento de $G$, donde la composición viene dada por el producto), entonces una representación de $G$ en $\A$ es un funtor $\G \rightarrow \A$. Los funtores covariantes corresponden a representaciones a izquierda, y los contravariantes a representaciones a derecha.
\end{remark}

Si enriquecemos ahora a $G$ y a $Aut(A)$ con una estructura de espacio topológico, podemos pedir que el morfismo $F$ sea continuo. Estas se llamarán representaciones continuas. Si $A$ es un espacio vectorial $V$ sobre $\C$ de dimensión $n$, tomando una base de $V$, notamos que $Aut(V)$ recibe una topología canónica como subespacio de $\C^{n \times n}$. En esta introducción, cuando hablemos de representaciones a secas nos estaremos refiriendo a las representaciones continuas de grupos topológicos en los $\C$-espacios vecto- \linebreak riales $V$ de dimensión finita.

Dada una representación $F$ de $G$ en $V$, podemos armar otras representaciones. Consideramos en primer lugar el espacio topológico $\overline{V}$ cuyos elementos están en biyección con los de $V$, pero donde la multiplicación está conjugada en el siguiente sentido: si notamos con $\overline{v}$ a los elementos de $\overline{V}$ para cada $v \in V$, entonces $\bar{} : V \longrightarrow \overline{V}$, la función que manda $v$ a $\overline{v}$, resulta un isomorfismo antilineal. O sea, $\overline{\alpha v + \beta w} = \bar{\alpha}\bar{v} + \bar{\beta}\bar{w} \  \forall v,w \in V$, $\alpha,\beta \in \C$. Luego podemos considerar la representación $\overline{F}$ de $G$ en $\overline{V}$ dada por $\overline{F}(g)(\bar{v}) = \overline{F(g)(v)}$. $\overline{F}(g)$ es un endomorfismo de $\overline{V}$ pues, en general, si $T$ es una transformación lineal de $V$ a $W$ y definimos $\overline{T}: \overline{V} \longrightarrow \overline{W} $ según $\overline{T}(\bar{v}) = \overline{T(v)}$, entonces $\overline{T}$ es una transformación lineal de $\overline{V}$ a $\overline{W}$:
\begin{eqnarray} \label{eq:Tbarra}
\overline{T}(\alpha \bar{x} + \beta \bar{y}) & = & \overline{T(\bar{\alpha} x + \bar{\beta} y)} \nonumber \\
& = & \overline{\bar{\alpha} T(x) + \bar{\beta} T(y)} \nonumber \\
& = & \alpha \overline{T(x)} + \beta \overline{T(y)} \nonumber \\
& = & \alpha \overline{T}(\bar{x}) + \beta \overline{T}(\bar{y}) 
\end{eqnarray}  
Resta verificar que $\overline{F}(gg') = \overline{F}(g) \circ \overline{F}(g')$. En efecto: 
\begin{eqnarray*} 
\overline{F}(gg')(\bar{v}) & = & \overline{F(gg')(v)} \\
& = & \overline{F(g)(F(g')(v))} \\
& = & \overline{F}(g) (\overline{F(g')(v)}) \\
& = & \overline{F}(g) (\overline{F}(g')(\bar{v})) \\
& = & (\overline{F}(g) \circ \overline{F}(g'))(\bar{v}) \hbox{ si } \bar{v} \in \overline{V}
\end{eqnarray*}
En segundo lugar, miramos $V^\lor$, el espacio dual de $V$ (usualmente notado como $V^*$), y definimos la representación $F^\lor : G \longrightarrow End(V^\lor)$ según $F^\lor (g)(\varphi)(v) = \varphi (F(g^{-1})(v)) \ \forall \varphi \in V^\lor, \ v \in V$. En otras palabras, \linebreak $F^\lor (g) = F(g^{-1})^\lor$, donde si $T: V \longrightarrow W$ es una transformación lineal entre espacios vectoriales, $T^\lor : W^\lor \longrightarrow V^\lor$ es la transformación adjunta definida por $T^\lor(\varphi)(v) = \varphi (T(v)) \ \forall \varphi \in W^\lor, \ v \in V$. Es conocido el hecho que $T^\lor$ es una tranformación lineal, resta verificar que $F^\lor(gg') = F^\lor(g) \circ F^\lor(g')$:
\begin{center}
$F^\lor(gg')(\varphi)(v) = \varphi (F((gg')^{-1})(v)) = \varphi (F(g'^{-1}g^{-1})(v)) = \varphi (F(g'^{-1})(F(g^{-1})(v))) = F^\lor(g')(\varphi) (F(g^{-1})(v)) = (F^\lor(g) (F^\lor(g')(\varphi)))(v) = (F^\lor(g) \circ F^\lor(g'))(\varphi)(v)$ si $\varphi \in V^\lor$, $v \in V$.
\end{center}

Por otro lado, dadas dos representaciones $F$ y $H$ de $G$ en $V$ y $W$ \linebreak respectivamente, podemos armar su producto tensorial $F \otimes H$ de $G$ en $V \otimes W$ definido según $F \otimes G (g)(v \otimes w) = F(g)(v) \otimes G(g)(w)$. Tenemos además un neutro para este producto tensorial que es la representación trivial unidimensional $I: G \longrightarrow End(\C)$ que a cada $g \in G$ le asigna la identidad.

\vspace{2ex}

Dados un grupo $G$, dos objetos $A$ y $B$ de una categoría $\A$ y dos representaciones $F:G \rightarrow Aut(A)$ y $H:G \rightarrow Aut(B)$, un morfismo de representaciones $\theta$ entre $F$ y $H$ es, por definición, una flecha $\theta : A \rightarrow B$ en la categoría $\A$ de forma tal que $\forall g \in G$, el cuadrado
\begin{equation} \label{morfrep}
 \xymatrix{ A \ar[r]^{F(g)} \ar[d]_\theta & A \ar[d]^\theta \\
					 B \ar[r]^{H(g)} & B}
\end{equation}					 
conmuta.

\begin{remark}
Interpretando las representaciones como funtores como en la observación \ref{represcomofuntor}, un morfismo de representaciones equivale a una transformación natural $\theta: F \Rightarrow H$. 
\end{remark}

De esta forma, el conjunto de las representaciones de $G$ en $\A$ resulta una categoría. A la categoría de representaciones de $G$ (en los $\C$-e.v. de dimensión finita) la notamos $Rep(G,\C)$.
 
\begin{remark}
Observemos que, por definición, una representación de $G$ en \linebreak $S \in \Ens$ no es otra cosa que una acción de $G$ en $S$. También observemos que un morfismo entre dos de estas representaciones equivale a un morfismo de acciones.
\end{remark}
 
Un morfismo de representaciones $\theta$ entre $F$ y $G$ en $Rep(G,\C)$ viene dado entonces por una transformación lineal $\theta : V \rightarrow W$ que induce, como vimos en \eqref{eq:Tbarra}, una transformación lineal $\bar{\theta}: \overline{F} \rightarrow \overline{G}$, que resulta también un morfismo entre las representaciones conjugadas: debemos verificar que 
$$
 \xymatrix{ {\overline{V}} \ar[r]^{\overline{F}(g)} \ar[d]_{\overline{\theta}} & {\overline{V}} \ar[d]^{\overline{\theta}} \\ 
					 {\overline{W}} \ar[r]_{\overline{H}(g)} & {\overline{W}} }
$$
conmuta, o sea que $\forall \bar{v} \in \overline{V}$, $\bar{\theta}(\overline{F}(g)(\bar{v})) = \overline{H}(g)(\bar{\theta}(\bar{v}))$, i.e.
\begin{center}
$\bar{\theta}(\overline{F(g)(v)}) = \overline{H}(g)(\overline{{\theta}(v)})$, i.e.

$\overline{\theta(F(g)(v))} = \overline{H(g)(\theta(v))}$. 
\end{center}
Pero esta ecuación es la conjugada de la que obtuvimos en \eqref{morfrep}.

Se deja al lector realizar un trabajo similar con las representaciones duales para obtener $\theta^\lor$.

\subsubsection*{Teoría Clásica de Tannaka}

Ahora sí estamos en condiciones de describir la teoría clásica de la dua-\linebreak lidad de Tannaka como está expuesta en \cite{JS}.

Sea $G$ un grupo topológico, y sea $Rep(G,\C)$ la categoría de las representaciones de $G$ en los $\C$-e.v. de dimensión finita, donde las flechas son los morfismos de representaciones. Tenemos un funtor de olvido \linebreak $U: Rep(G,\C) \longrightarrow Vec^{<\infty}_{\C}$, que manda una representación $F:G\longrightarrow V$ a $V$ y un morfismo $\theta : V \longrightarrow W$ a $\theta$. Nos interesa entender el conjunto $End(U)$.

Una transformación natural $u:U\Rightarrow U$ es, por definición, una familia de transformaciones lineales $u_{F,V} \in End(V)$, una para cada representación $F:G\longrightarrow V$, de forma tal que para cada $\theta : V \longrightarrow W$ morfismo entre las representaciones $F$ y $H$, el diagrama
\begin{equation} \label{utn}
\xymatrix{ V \ar[r]^\theta \ar[d]_{u_{F,V}} & W \ar[d]^{u_{G,W}} \\
					 V \ar[r]^\theta & W}
\end{equation}
conmuta.

A $End(U)$ le podemos dar la siguiente estructura:
\begin{itemize}
	\item Para cada $(F,V) \in Rep(G,\C)$, tenemos una función \linebreak $End(U) \rightarrow End(V) \subset \C^{n \times n}$ que a cada $u$ le asigna $u_{F,V}$. Podemos darle a $End(U)$ la topología inicial respecto de estas funciones.
	\item $End(U)$ tiene una estructura de $\C$-álgebra con la estructura de $\C$-e.v. punto a punto y con la composición $(u \circ u')_{F,V} = u_{F,V} \circ u'_{F,V}$.
	\item También tenemos una conjugación $\bar{} : End(U) \longrightarrow End(U)$, definiendo $\bar{u}_{F,V}(v) = \overline{u_{\overline{F},\overline{V}}(\bar{v})}$. Se verifica que $\bar{u} \in End(U)$ pues eso significa (observando el diagrama \eqref{utn}) que $\theta(\bar{u}_{F,V}(v)) = \bar{u}_{G,W}(\theta(v))$, y esto equivale por definición de $\bar{u}$ a $\theta(\overline{u_{\bar{F},\bar{V}}(\bar{v})}) = \overline{u_{\bar{G},\bar{W}}(\overline{\theta(v)})}$, lo cual se obtiene del diagrama \eqref{utn} aplicado a $\overline{F}$, $\overline{G}$ y $\bar{\theta}$. Diremos que \linebreak $u \in End(U)$ es autoconjugada si $\bar{u} = u$.
\end{itemize}

Como vimos anteriormente, dadas dos representaciones $F$ y $H$ de $G$ en $V$ y $W$ respectivamente, podemos armar su producto tensorial $F \otimes H$ que es una representación de $G$ en $V \otimes W$. También definimos la representación $I$, que es el neutro para este producto tensorial. En la sección \ref{sec:CatTens} veremos esto con mayor generalidad, pero esta es una estructura de categoría tensorial de $Rep(G,\C)$ que convierte a $U$ en un funtor entre categorías tensoriales. Más adelante definiremos en general qué significa que $u \in End(U)$ preserve este producto, pero en este caso se traduce a que $u_{F \otimes H, V \otimes W} = u_{F,V} \otimes u_{H,W}$, y que $u_{I,\C} = id_{\C}$.

Observemos ahora que cada $g \in G$ nos da un $u^g \in End(U)$ según \linebreak $u^{g}_{F,V} = F(g)$. Así obtenemos una función $u^{(-)} : G \longrightarrow End(U)$ Para poder recuperar $G$ a partir de $End(U)$, un camino posible es caracterizar las $u \in End(U)$ tales que $u = u^g$ para algún $g \in G$, o sea la imagen de esta función. Este camino es el de la dualidad de Tannaka.

La caracterización es la siguiente: sea $\T(G)$ el conjunto de las $u$ de $End(U)$ que preservan el producto tensorial y son autoconjugadas. Entonces:
\begin{itemize}
	\item Pidiendo solamente que $G$ sea un monoide, $\T(G)$ es un monoide con la composición y $u^{(-)} : G \longrightarrow \T(G)$ es un morfismo de monoides.
	\item $\T(G)$ es un grupo topológico compacto si $G$ lo es.
	\item $u^{(-)} : G \longrightarrow \T(G)$ es, en este caso, un isomorfismo de grupos topológicos.
\end{itemize}

Notemos la similitud de estas propiedades con el primer enunciado de la teoría de Galois de Grothendieck como fue expuesta en la sección \ref{sec:introGal}: en ambos casos a partir de un grupo y del funtor de olvido de la categoría ya sea de representaciones o de acciones, se trata de recuperar al grupo, definiendo un morfismo del grupo al grupo de automorfismos del funtor de olvido y a continuacion demostrando que es un isomorfismo.

Las tres propiedades anteriores están probadas en el capítulo 1 de \cite{JS} y no las demostraremos en esta tesis pues lo que nos interesa son los resultados categóricos que motivan.

\subsubsection*{Teoría Moderna de Tannaka}

Las mayores referencias para este tema son, en orden cronológico, \cite{Saavedra}, \cite{DM} y \cite{JS}. En esta tesis, el trabajo que seguiremos más de cerca es el de Joyal-Street. En los tres trabajos, ya no se trata solamente de recuperar un objeto a partir de sus representaciones. A partir de una categoría $\Cat$ y un funtor $F$ a los espacios vectoriales de dimensión finita, se buscan propiedades para $\Cat$ y $F$ que permitan recuperar a la categoría como las representaciones de un objeto (en \cite{Saavedra} y \cite{DM}) o como los comódulos sobre un objeto (en \cite{JS}). Este tipo de teorema se puede encontrar en \cite{Saavedra}, p.198, 3.2.2.3, \cite{DM}, p.130, Theorem 2.11 y \cite{JS}, p. 456, Theorem 3 y corresponde al item 2' de la enumeración del final de esta sección.

En \cite{Saavedra} y \cite{DM}, los autores utilizan propiedades y terminología (y también probablemente el modo de pensar) de la geometría algebraica y la cohomología no-abeliana. Esto tiene sin duda una ventaja, que es que se obtiene un teorema mucho más similar al de \cite{SGA} que en \cite{JS}, por ejemplo el teorema de \cite{DM} recién mencionado dice que bajo ciertas condiciones para una categoría $\Cat$ y un funtor $w: \Cat \rightarrow Vec_K^{<\infty}$, tenemos un esquema grupo afín $G$ que re- \linebreak presenta al funtor $Aut^{\otimes}(w)$ y una equivalencia de categorías $\Cat \rightarrow Rep_K(G)$. Luego este teorema se puede interpretar de forma directa como un análogo vectorial al de la teoría de Galois de Grothendieck. Observemos que no es una generalización del mismo.

Sin embargo, todo el bagaje teórico que utilizan estos dos trabajos tiene también una desventaja. En primer lugar, lo cual no es en nuestra opinión algo menor, los hacen de difícil acceso a los matemáticos y estudiantes no adentrados en estos temas (entre ellos el autor de esta tesis). Pero además, y este sí es un motivo más objetivo para haber elegido seguir a \cite{JS}, no se puede observar de forma tan clara cuáles son las condiciones para la categoría y el funtor realmente necesarias para el teorema, y cuáles sólo para poder encuadrarlo en este marco teórico. Ante un objetivo a futuro de desarrollar una teoría más general, tener bien en claro cuáles son las hipótesis que no se pueden omitir es algo fundamental.

Una forma sintética de diferenciar el artículo de \cite{JS} de los otros dos, es decir que Joyal trabaja "`del lado algebraico de la dualidad"' (veremos esto con todo detalle en la sección \ref{sec:OtroLadoDualidad}). Dicho con un poco más de precisión, en lugar de trabajar con un esquema grupo afín $G$, Joyal construye a partir de un funtor $F: \Cat \rightarrow Vec_K^{<\infty}$ un espacio vectorial $End^\lor(F)=Nat^\lor(F,F)$ que cumple que $Hom(End^\lor(F),K)$ coincide con el espacio vectorial $End(F)$ de los endomorfismos de $F$ (escribimos $Hom(End^\lor(F),K)$ y no $(End^\lor(F))^\lor$ pues no será esta una dualidad en el sentido fuerte que veremos más adelante). Luego Joyal muestra la distinta estructura que $End^\lor(F)$ va obteniendo, a medida que se aumentan las hipótesis sobre $\Cat$ y $F$: 
\begin{itemize}
	\item Sin ninguna hipótesis adicional $End^\lor(F)$ resulta una $K$-coálgebra.
	\item Si $\Cat$ y $F$ son tensoriales, $End^\lor(F)$ resulta una $K$-biálgebra.
	\item Si todo objeto de $\Cat$ tiene un dual a izquierda, $End^\lor(F)$ resulta una $K$-álgebra de Hopf.
\end{itemize}

Los $K$-e.v. $FC$ obtienen una estructura de $End^\lor(F)$-comódulos, y se construye así un levantamiento de $F$ según el diagrama

\begin{equation} \label{diagramaJoyal}
\xymatrix { & Comod_{<\infty}(End^\lor(F)) \ar[d]^{U} \\
				\Cat \ar[ur]^{\tilde{F}} \ar[r]^{F} & Vec_K^{<\infty} }
\end{equation}	

Surge entonces la pregunta de qué condiciones sobre $\Cat$ y $F$ nos permiten probar que $\tilde{F}$ es una equivalencia de categorías, es decir qué propiedades caracterizan a las categorías de comódulos sobre una coálgebra.

Podemos encontrar, en el desarrollo de Joyal, dos enunciados correspondientes a los que ya vimos para la teoría de Galois de Grothendieck, de los cuales el primero también corresponde al desarrollo que expusimos de la teoría clásica de Tannaka. Los notaremos con 1' y 2' para marcar esta analogía.

\begin{enumerate}
	\item[1'.] Dada una $K$-coálgebra $E$, y el funtor de olvido \linebreak $U: Comod_{<\infty}E \rightarrow Vec_K^{<\infty}$, se tiene un morfismo $\tilde{\alpha}: End^\lor(U) \rightarrow E$ que resulta un isomorfismo de $K$-coálgebras. (\cite{JS}, 6, p.453, proposition 5)
	\item[2'.] Si $\Cat$ es abeliana y enriquecida en $Vec_K$, y $F$ es aditivo, exacto y fiel, entonces en el diagrama \eqref{diagramaJoyal} $\tilde{F}$ es una equivalencia de categorías (\cite{JS}, 7, p. 456, Theorem 3)
\end{enumerate}

\subsection{El contenido de esta tesis} \label{sub:contenido}

	
	El contenido de esta tesis puede separarse en dos: las secciones \ref{sec:CatTens}, \ref{sec:Dualidad}, \ref{sec:DefAlg} y \ref{sec:Ends}; y las secciones \ref{sec:NatPredual}, \ref{sec:EndPredual}, \ref{sec:Vec} y \ref{sec:OtroLadoDualidad}. Puede pensarse que las primeras cuatro secciones son principalmente introductorias, pues en ellas desarrollamos más que nada los conceptos que vamos a utilizar en las siguientes cuatro. Sin embargo, el desarrollo que hacemos en ellas de los diferentes conceptos lo realizamos apuntando al trabajo que haremos después. Más precisamente:
	
\begin{itemize}
	\item En la sección \ref{sec:CatTens} definimos los conceptos básicos de las categorías tensoriales. Dentro de estas categorías se encuadra la teoría moderna de Tannaka, por lo que estas son fundamentales para este trabajo. En esta sección desarrollamos también el "`cálculo de ascensores"' ideado por E. Dubuc, que es la herramienta que nos permitirá realizar cálculos dentro de estas categorías. La validez del uso que de él hacemos está justificada por el teorema de coherencia para categorías tenso- \linebreak riales simétricas, que probamos también en esta sección.
	\item En la sección \ref{sec:Dualidad} definimos qué quiere decir en una categoría tensorial que un objeto tenga un dual. En \cite{JS} y en \cite{DM} se utilizan definiciones de dualidad a priori diferentes, por lo que exponemos ambas y mostramos qué relaciones podemos establecer entre ellas. Concluimos, al probar la equivalencia de ambas definiciones, que algunas hipótesis de la definición de \cite{DM} se pueden omitir. Además diferenciamos el caso en el que la categoría tensorial es simétrica de aquel en que no.
	\item En la sección \ref{sec:DefAlg} damos las definiciones de las distintas estructuras algebraicas que vamos a considerar, sobre una categoría tensorial. En lugar de trabajar siempre con funtores llegando a $Vec_K^{<\infty}$, en ciertas partes de esta tesis trabajamos de forma más general con funtores llegando a una subcategoría $\Vat_0$ de una categoría tensorial $\Vat$, en la cual los objetos de $\Vat_0$ tengan un dual en el sentido de la sección \ref{sec:Dualidad}. Por lo tanto, es necesario para poder realizar esto dar las deficiones como en esta sección. 
	\item Finalmente, en la sección \ref{sec:Ends} introducimos a los ends y coends que, si bien no son usados en \cite{JS}, son una forma cómoda y compacta de introducir a $Nat^\lor(F,G)$ y de trabajar con este objeto, y los utilizaremos intensivamente en las secciones siguientes para definirlo y describir su estructura. Ademas mostramos algnas propiedades generales de estos objetos y del producto tensorial entre funtores que también nos servirán luego.
\end{itemize}
	
En las últimas cuatro secciones es que desarrollamos realmente la teoría moderna de Tannaka de forma similar a como fue expuesta en \ref{sub:introJoyal}. Rea- \linebreak lizamos sin embargo algunas modificaciones:

	
	Como ya mencionamos, en algunas partes de esta tesis trabajamos reemplazando la categoría $Vec_K$ por una categoría tensorial $\Vat$ cualquiera. Más precisamente, en las secciones \ref{sec:NatPredual}, \ref{sec:EndPredual} y \ref{sec:OtroLadoDualidad} hacemos esto, mientras que en la sección \ref{sec:Vec} (donde están las demostraciones de los teoremas que se hacen "`con elementos"') nos vemos obligados (por ahora) a trabajar en $Vec_K$.
	 Una de las ventajas de trabajar en una categoría $\Vat$ abstracta es que se pone de ma- nifiesto qué condiciones de la categoría son necesarias para cada propiedad que uno quiera obtener. En particular en nuestro caso, definiendo $Nat^\lor$ de una forma ligeramente diferente a la de Joyal pero que coincide con la de él en el caso simétrico (con duales a derecha en lugar de a izquierda, y relajando la hipótesis de que ambos funtores deban caer en $\Vat_0$), obtenemos que $Nat^\lor$ da una estructura de cocategoría enriquecida en $\Vat$, y por lo tanto $End^\lor(F)$ resulta una coálgebra en $\Vat$, aún en el caso en que $\Vat$ no sea simétrica. La simetría de $\Vat$ sí resulta fundamental en nuestro caso (entre otras condiciones) para que $End^\lor(F)$ tenga una estructura de biálgebra y de álgebra de Hopf.
	
	Otra ventaja de trabajar con $\Vat$ es que nos obliga a trabajar con flechas y no con elementos. Para ayudarnos a realizar los cálculos de composiciones utilizamos el sistema gráfico de ascensores desarrollado en la sección \ref{sec:CatTens}.
	
	
	
	En la sección \ref{sec:NatPredual} damos la definición de $Nat^\lor(F,G)$ de Joyal, aunque ge- \linebreak neralizada adecuadamente al caso en el que la categoría $\Vat$ no es simétrica, y encuadramos a la coevaluación, la cocomposición y la counidad en una estructura que llamamos de "`cocategoría enriquecida"' dual a la de categoría enriquecida. De esta forma obtenemos la estructura de coálgebra de $End^\lor(F)$ de forma dual a como $[C,C]$ (u $Hom(C,C)$) es un monoide con la composición en el caso usual. 
	
	Luego, en la sección \ref{sec:EndPredual} nos concentramos en este objeto y obtenemos para él todo lo siguiente:
\begin{itemize}
	\item A partir de una coálgebra $C$ y el funtor de olvido \linebreak $U: Comod_0C \rightarrow \Vat_0$, obtenemos un morfismo de coálgebras \linebreak $\tilde{\rho}: End^\lor (U) \rightarrow C$. En la sección \ref{sec:Vec} mostraremos que en el caso $\Vat_0 = Vec_K^{<\infty}$ este morfismo resulta biyectivo, es decir probaremos 1' como está expresado en la sección \ref{sub:introJoyal}.
	\item A partir de un funtor $F: \Cat \rightarrow \Vat_0$, construimos un levantamiento de este a la categoría de $End^\lor(F)$-comódulos. En la sección \ref{sec:Vec} mostraremos que en el caso $\Vat_0 = Vec_K^{<\infty}$ bajo ciertas hipótesis adicionales este levantamiento resultará una equivalencia de categorías (es decir probaremos 2').
	\item Obtenemos para $End^\lor(F)$ las mismas propiedades de biálgebra y álgebra de Hopf que Joyal pero ahora sobre $\Vat$, con alguna pequeña diferencia (que desaparece en el caso simétrico) debida a que adoptamos para nuestra definición duales a derecha. 
\end{itemize}

	 
	 Como ya mencionamos antes, y se observa comparando los diagramas \eqref{diagramaJoyal} y \eqref{diagramaGalois}, la similitud de 2' con 2 no es tan directa como en \cite{DM}. Para obtenerla, en la sección \ref{sec:OtroLadoDualidad} mostramos cómo la estructura de álgebra de Hopf de $End^\lor(F)$ se corresponde con ser un objeto grupo en la categoría dual de las álgebras, definimos las representaciones de este objeto grupo en un objeto $V$ de $\Vat$ y mostramos que estas se corresponden con una estructura de $End^\lor(F)$-comódulo para $V$. De esta forma 2' nos quedará expresado en una forma análoga a la de \cite{Saavedra}, \cite{DM} y \cite{SGA}.

\subsection{Notaciones y otras aclaraciones} \label{notac}

El lector de este trabajo debería manejar los conceptos de categoría, funtor, transformación natural, límite, colímite, colímite filtrante, Yoneda y adjunción. Todos ellos están desarrollados en \cite{ML}. Se intentó adoptar para este trabajo un estilo explicativo, tratando de dejar sólo detalles a cargo del lector.

Notaremos con $\Ens$ a la categoría de los conjuntos, con $\Gr$ a la categoría de los grupos, con $Vec_K$ a la categoría de los $K$-espacios vectoriales y con $Vec_K^{<\infty}$ a la subcategoría de los $K$-e.v. de dimensión finita. Si $\Cat$ y $\Dat$ son dos categorías, notaremos con $\Cat^\Dat$ a la categoría de funtores de $\Dat$ a $\Cat$, donde las flechas son las transformaciones naturales.

Si $\Cat$ es una categoría, y $C$, $C'$ son objetos de $\Cat$, notaremos con $[C,C']$ al conjunto de flechas entre $C$ y $C'$, para diferenciarlo de $Hom(C,C')$ que tendrá otro sentido cuando $\Cat$ tenga hom internos. Al conjunto de tranformaciones naturales entre dos funtores $F$ y $G$ lo notaremos $nat[F,G]$. A las identidades de un objeto, a menudo (especialmente dentro de diagramas) las notaremos como al objeto, por ejemplo a $id_C$ la notaremos $C \stackrel{C}{\rightarrow} C$. Cuando tengamos un isomorfismo (o una biyección), varias veces notaremos con el mismo nombre al isomorfismo y a su inversa. También representaremos una biyección como una raya horizontal que separa a un elemento genérico de cada uno de los conjuntos que están en biyección.

\pagebreak

\section {Categorías Tensoriales} \label{sec:CatTens}

\subsection{Definiciones y propiedades básicas}

La idea es extender los conceptos de producto tensorial y de dualidad de la categoría $Vec_K$ a otras categorías. 

Una primera definición, informal, es que una categoría $\Cat$ es una categoría tensorial si tiene una operación binaria funtorial $\otimes$ entre sus objetos y sus flechas, que es asociativa y tiene neutro "`salvo isomorfismos naturales"'. Además, uno puede pedirle a una categoría tensorial que la operación sea conmutativa, también "`salvo un isomorfismo natural"'. Estas categorías se llaman simétricas. Para evitar repeticiones, introduciremos entre corchetes esa noción, de forma tal que los enunciados que incluyan una parte entre corchetes puedan leerse con o sin ellos dando lugar a dos enunciados dife- \linebreak rentes. También cabe observar que toda categoría tensorial simétrica es una categoría tensorial, por lo tanto los enunciados y definiciones para categorías tensoriales también son válidos para categorías tensoriales simétricas.

Podemos realizar entonces una primera

\begin {\de} 
	Sea $\Cat$ es una categoría. Consideremos $F: \Cat \times \Cat \rightarrow \Cat \times \Cat$ el funtor $(X,Y) \rightarrow (Y,X)$. Una 6[+1]-upla $(\Cat,\otimes,\phi[,\psi],I,l,r)$ es una categoría tensorial [simétrica] si 
\begin{enumerate}
	\item $\otimes: \Cat \times \Cat \rightarrow \Cat$ es un funtor (Notamos $(X,Y) \stackrel{\otimes}{\mapsto} X \otimes Y$ y $(f,g) \stackrel{\otimes}{\mapsto} f \otimes g$)
  \item $\phi : \otimes \circ (id_\Cat \times \otimes) \stackrel{\cong}{\Rightarrow} \otimes \circ (\otimes \times id_\Cat)$ es un isomorfismo natural \linebreak ($\phi_{X,Y,Z} : X \otimes (Y \otimes Z) \stackrel{\cong}{\rightarrow} (X \otimes Y) \otimes Z$ es la asociatividad de $\otimes$ salvo isomorfismo)
  \item \ [ $\psi : \otimes \stackrel{\cong}{\Rightarrow} \otimes \circ F$ es un isomorfismo natural y su inversa coincide con sí mismo ($\psi_{X,Y} : X \otimes Y \stackrel{\cong}{\rightarrow} Y \otimes X$ es la conmutatividad de $\otimes$ salvo isomorfismo) ]
  \item 
  Los funtores $I \otimes (-), (-) \otimes I : \Cat \rightarrow \Cat$ son equivalencias de categorías con inversa $id_\Cat$ y transformaciones naturales $l: I \otimes (-) \Rightarrow id_\Cat$ y $r: (-) \otimes I \rightarrow id_\Cat$ respectivamente ($l_X: I \otimes X \stackrel{\cong}{\rightarrow} X$ y $r_X: X \otimes I \stackrel{\cong}{\rightarrow} X$ muestran que $I$ es un neutro para $\otimes$ salvo isomorfismo).
  \item los diagramas triángulo, pentágono [y hexágono] que mostramos a continuación conmutan.
\end{enumerate}
\end {\de}


Combinando los isomorfismos $\phi$[, $\psi$], $l$ y $r$ podemos armar los siguientes diagramas triángulo, pentágono [y hexágono] que pedimos por definición que conmuten (omitimos en general los subíndices de estas transformaciones naturales):


\xymatrix { X \otimes (I \otimes Y) \ar[dr]_{X \otimes l} \ar[rr]^{\phi} & & (X \otimes I) \otimes Y \ar[dl]^{r \otimes Y} \\
							& X \otimes Y }

\xymatrix {  & (X \otimes Y) \otimes (Z \otimes T) \ar[dr]^{\phi} & \\
						X \otimes (Y \otimes (Z \otimes T)) \ar[ur]^{\phi} \ar[d]_{X \otimes \phi} & & ((X \otimes Y) \otimes Z) \otimes T \\
						X \otimes ((Y \otimes Z) \otimes T) \ar[rr]^{\phi} & & (X \otimes (Y \otimes Z)) \otimes T \ar[u]_{\phi \otimes T} }

\xymatrix { & (X \otimes Y) \otimes Z \ar[r]^{\psi} & Z \otimes (X \otimes Y) \ar[dr]^{\phi} & \\
						X \otimes (Y \otimes Z) \ar[ur]^{\phi} \ar[dr]_{X \otimes \psi} & & & (Z \otimes X) \otimes Y \\
						& X \otimes (Z \otimes Y)  \ar[r]^{\phi} & (X \otimes Z) \otimes Y \ar[ur]_{\psi \otimes Y} }

Abusaremos la notación de la siguiente manera: cuando digamos que una categoría $\Cat$ es tensorial [simétrica], y no explicitemos nada más, supondremos dadas $\otimes$, $\phi$[, $\psi$] e $I$ y usaremos siempre esas letras para ellos, aun en categorías distintas (si la categoría es $\Cat'$ quizás pongamos $\phi'$[, $\psi'$] e $I'$). También omitiremos en general los subíndices de $\phi$ y $\psi$.

Se deduce de los axiomas anteriores que $l_I = r_I : I \otimes I \rightarrow I$ (ver \cite{JS1992}, Proposition 1.1, Page 3). También se tiene la siguiente

\begin{\prop} \label{unsoloneutro}
	Los neutros son todos isomorfos
\end{\prop}

\begin{proof}
	Considerar \xymatrix@1 {I & I \otimes I' \ar[l]_>>>>>>{r'_I} \ar[r]^>>>>{l_{I'}} & I'}
\end{proof}

%

Veamos con más detalle en qué consiste la naturalidad de $\psi$, puesto que luego la utilizaremos. $\psi$ es una transformación natural entre los funtores $\otimes$ y $\otimes \circ F$, eso significa que para flechas $f: C \rightarrow D$ y $f': C' \rightarrow D'$, el cuadrado
\begin{equation} \label{psinat}
\xymatrix { C \otimes C' \ar[r]^{f \otimes f'} \ar[d]_{\psi} & D \otimes D' \ar[d]^{\psi} \\
						C' \otimes C \ar[r]^{f' \otimes f} & D' \otimes D }
\end{equation}
conmuta.

\subsection{Coherencia para categorías tensoriales}

A partir de que los diagramas de la definición (sin el hexágono) conmuten se obtiene que "`todos"' los diagramas de esta forma lo hacen, es decir que se pueden realizar los productos tensoriales en cualquier orden (respecto de la asociatividad, o sea sin permutar) y agregando o quitando al neutro $I$ dando lugar a objetos isomorfos con un único isomorfismo canónico (ver \cite{ML}, VII, 2, pp.161-166). Este hecho se conoce como las propiedades de coherencia de Mac Lane, y lo usaremos en este trabajo realizando los siguientes abusos:
\begin{itemize}
	\item Por la coherencia de la asociatividad, escribiremos todos los productos tensoriales sin paréntesis.
	\item Omitiremos casi siempre escribir al neutro para el producto tensorial, y a las flechas $l$ y $r$.
\end{itemize}

Otra forma de expresar este hecho es definiendo como en \cite{DM}, 1, pp.106-108, para cualquier conjunto finito $A$, un funtor que extiende $\otimes$ a \linebreak $\displaystyle \otimes_{a \in A}: C^A \rightarrow C$ satisfaciendo ciertas propiedades generales de asociatividad, que nos permite realizar los productos tensoriales en algún orden canónico, por ejemplo de izquierda a derecha.

\subsubsection*{Cálculo de Ascensores (concebido por E. Dubuc)}

La coherencia de Mac Lane nos permitirá escribir las composiciones de flechas de forma vertical, con un sistema gráfico que nos facilitará mucho los cálculos. Los diagramas expresan distintas composiciones de productos tensoriales de flechas, donde se omiten los símbolos $\otimes$ y se reemplazan las identidades por dobles líneas. Las composiciones deben leerse de arriba hacia abajo. La funtorialidad del producto tensorial nos permitirá subir o bajar flechas que no tengan "`obstáculos"' en el camino (de allí el nombre de "`ascensores"'), por ejemplo dadas flechas $f:C \rightarrow D$, $f':C' \rightarrow D'$ tenemos las igualdades

\begin{equation}  \label{ascensor}
\xymatrix@C=-0.2pc @R=0.5pc{C {\ar@<4pt>@{-}'+<0pt,-6pt>[dd] \ar@<-4pt>@{-}'+<0pt,-6pt>[dd]^{f}}  & & C' \ar@2{-}[dd] 	& & & 			& & 		& & &  				C \ar@2{-}[dd]  & & C' {\ar@<4pt>@{-}'+<0pt,-6pt>[dd] \ar@<-4pt>@{-}'+<0pt,-6pt>[dd]^{f'}} \\
									 &&&&&	C {\ar@<4pt>@{-}'+<0pt,-6pt>[dd] \ar@<-4pt>@{-}'+<0pt,-6pt>[dd]^{f}} & & C' {\ar@<4pt>@{-}'+<0pt,-6pt>[dd] \ar@<-4pt>@{-}'+<0pt,-6pt>[dd]^{f'}} \\
									 D \ar@2{-}[dd] & & C' {\ar@<4pt>@{-}'+<0pt,-6pt>[dd] \ar@<-4pt>@{-}'+<0pt,-6pt>[dd]^{f'}}  & \textcolor{white}{XX} = \textcolor{white}{XX} & & 			& & 							& \textcolor{white}{XX} = \textcolor{white}{XX} & & 	C {\ar@<4pt>@{-}'+<0pt,-6pt>[dd] \ar@<-4pt>@{-}'+<0pt,-6pt>[dd]^{f}} & & D' \ar@2{-}[dd] \\
									 &&&&& D & & D' \\
									 D & & D'							& & & 										& &									& & & 											D & & D' }
\end{equation}
		
Otro ejemplo es el cuadrado \eqref{psinat} que con estos diagramas nos queda

\begin{equation} \label{swap}
 \xymatrix@C=-0.2pc{ C {\ar@<4pt>@{-}'+<0pt,-6pt>[d] \ar@<-4pt>@{-}'+<0pt,-6pt>[d]^{f}} & & C' {\ar@<4pt>@{-}'+<0pt,-6pt>[d] \ar@<-4pt>@{-}'+<0pt,-6pt>[d]^{f'}}		& & & & & & 				C \ar@2{-}[rrd]|{\psi} & & C' \ar@2{-}[lld]|{\psi} \\
									 D \ar@2{-}[rrd]|{\psi} & & D' \ar@2{-}[lld]|{\psi}	& & \textcolor{white}{XX} = \textcolor{white}{XX} & & & & 		C' {\ar@<4pt>@{-}'+<0pt,-6pt>[d]        
                  \ar@<-4pt>@{-}'+<0pt,-6pt>[d]^{f'}} & & C {\ar@<4pt>@{-}'+<0pt,-6pt>[d]        
                  \ar@<-4pt>@{-}'+<0pt,-6pt>[d]^{f}} \\
									 D & & D'															& & & & & & 									D' & & D }
\end{equation}
									 
Es decir, la naturalidad de $\psi$ nos permite subir o bajar flechas que vengan antes o después que $\psi$. También tenemos, como $\psi$ es su propia inversa, la igualdad

	\begin{equation} \label{psipsi}
 \xymatrix@C=-0.2pc @R=0.5pc {& & 		& & & & & & C \ar@2{-}[rrdd]|{\psi} & & D \ar@2{-}[lldd]|{\psi}  \\
 									 C \ar@2{-}[dd] & & D \ar@2{-}[dd] \\
									  & & 		& & & \textcolor{white}{XX} = \textcolor{white}{XX} & & & D \ar@2{-}[rrdd]|{\psi} & & C \ar@2{-}[lldd]|{\psi}	\\ 
									 C & & D \\
									 	& & 		& & & & & & C & & D}
\end{equation}	

Una vez que usamos la coherencia de Mac Lane, notamos que en las categorías tensoriales simétricas cualquier flecha que involucre a $\phi$, $l$, $r$ y $\psi$ se puede representar solamente escribiendo las $\psi$, por ejemplo el diagrama hexágono nos queda la igualdad

\begin{equation} \label{hex}
\xymatrix@C=-0.3pc@R=0.5pc { X \ar@2{-}[dd] & & Y \ar@2{-}[rrdd]|{\psi} & & Z \ar@2{-}[lldd]|{\psi} & & & & & &  \\	
								 & & & & & & & & & & X \ar@2{-}[rrdd] & & Y \ar@2{-}[rrdd] & & Z \ar@2{-}[lllldd]|{\psi} \\
								 X \ar@2{-}[rrdd]|{\psi} & & Z \ar@2{-}[lldd]|{\psi} & & Y \ar@2{-}[dd]   & & \textcolor{white}{XX} = \textcolor{white}{XX} & & & & \\							 
								 & & & & & & & & & & Z & & X & & Y \\
								 Z & & X & & Y & & & & & & & }
\end{equation}
								 
	que podemos pensar que (usándola repetidas veces) nos da la forma de describir cualquier $\psi$ que involucre a más de dos objetos, en términos de $\psi$ que involucran sólo dos. Notemos también que, como $\psi$ es su propia inversa, y las inversas de las dos flechas descriptas en el diagrama de arriba también deben ser las mismas, tenemos también la igualdad
	
\begin{equation} \label{hex2}
\xymatrix@C=-0.3pc @R=0.5pc{ X \ar@2{-}[rrdd]|{\psi} & & Y \ar@2{-}[lldd]|{\psi} & & Z \ar@2{-}[dd] & & & & & &  \\	
								 & & & & & & & & & & X \ar@2{-}[rrrrdd]|{\psi} & & Y \ar@2{-}[lldd] & & Z \ar@2{-}[lldd] \\
								 Y \ar@2{-}[dd] & & X \ar@2{-}[rrdd]|{\psi} & & Z \ar@2{-}[lldd]|{\psi}   & & \textcolor{white}{XX} = \textcolor{white}{XX} & & & & \\							 
								 & & & & & & & & & & Y & & Z & & X \\
								 Y & & Z & & X & & & & & & & }
\end{equation}	
	
		Recordemos que el grupo simétrico de permutaciones $S_n$ es el grupo libre generado por los ciclos $s_i=(i,i+1)$ para $i=1,...,n-1$ sujetos a las condiciones:
\begin{itemize}
	\item $s_i^2 = 1$, lo cual tenemos para las $\psi$ por \eqref{psipsi} (relajando esta hipótesis se tienen las "`braided tensor categories"' por ejemplo en \cite{JS1992}).
	\item $s_i$ conmuta con $s_j$ si $|i-j|>1$, lo cual tenemos para las $\psi$ por \eqref{ascensor} (si $|i-j|>1$ las $\psi$ no se cruzan al subir y bajar).
	\item $(s_i s_{i+1})^3 = 1$, lo cual tenemos para las $\psi$ por el siguiente argumento:
\end{itemize}

\xymatrix@C=-0.3pc@R=0.5pc { 	
X \ar@2{-}[rrdd]|{\psi} & & Y \ar@2{-}[lldd]|{\psi} & & Z \ar@2{-}[dd] & & & 						X \ar@2{-}[rrdd]|{\psi} & & Y \ar@2{-}[lldd]|{\psi} & & Z \ar@2{-}[dd] & & & X \ar@2{-}[rrdd]|{\psi} & & Y \ar@2{-}[lldd]|{\psi} & & Z \ar@2{-}[dd] 					\\
\\
Y \ar@2{-}[dd] & & X \ar@2{-}[rrdd]|{\psi} & & Z \ar@2{-}[lldd]|{\psi} & & & 						Y \ar@2{-}[rrdd] & & X \ar@2{-}[rrdd] & & Z \ar@2{-}[lllldd]|{\psi}		& & & Y \ar@2{-}[rrdd] & & X \ar@2{-}[rrdd] & & Z \ar@2{-}[lllldd]|{\psi}						\\
& & & & & & & & & & & & & & & & & & & & & X \ar@2{-}[dd] & & Y \ar@2{-}[dd] & & Z \ar@2{-}[dd] \\
Y \ar@2{-}[rrdd]|{\psi} & & Z \ar@2{-}[lldd]|{\psi} & & X \ar@2{-}[dd] & \textcolor{white}{XX} = \textcolor{white}{XX} & & 						Z \ar@2{-}[dd] & & Y \ar@2{-}[rrdd]|{\psi} & & X \ar@2{-}[lldd]|{\psi}	& \textcolor{white}{XX} = \textcolor{white}{XX}  & & 	Z \ar@2{-}[rrrrdd]|{\psi} & & Y \ar@2{-}[lldd] & & X \ar@2{-}[lldd]		& \textcolor{white}{XX} = \textcolor{white}{XX}  & & 				\\
& & & & & & & & & & & & & & & & & & & & & X & & Y & & Z \\
Z \ar@2{-}[dd] & & Y \ar@2{-}[rrdd]|{\psi} & & X \ar@2{-}[lldd]|{\psi} & & & Z \ar@2{-}[rrrrdd]|{\psi} & & X \ar@2{-}[lldd] & & Y \ar@2{-}[lldd]		& & &  Y \ar@2{-}[rrdd]|{\psi} & & X \ar@2{-}[lldd]|{\psi} & & Z \ar@2{-}[dd]						\\
\\
Z \ar@2{-}[rrdd]|{\psi} & & X \ar@2{-}[lldd]|{\psi} & & Y \ar@2{-}[dd] & & & 						X & & Y & & Z 		& & & 				X & & Y & & Z 		 						\\
\\
X \ar@2{-}[dd] & & Z \ar@2{-}[rrdd]|{\psi} & & Y \ar@2{-}[lldd]|{\psi} & & & 								& & & 						\\
\\
X & & Y & & Z 																											& & & 								& & & 						}

\hspace{1ex}

La primer igualdad es por los diagramas \eqref{hex} y \eqref{hex2}, la segunda por la naturalidad de $\psi$ y la tercera pues $\psi$ es su propia inversa.

Luego, cualquier igualdad que sea válida en $S_n$ nos dará una igualdad en cualquier categoría tensorial simétrica al traducir las permutaciones a las $\psi$. En otras palabras, y esto será lo que utilizaremos en este trabajo, distintas combinaciones de $\psi$ serán iguales si lo son en $S_n$, es decir si realizan la misma permutación de los objetos. Gracias al sistema gráfico que utilizaremos para representarlas, será sencillo ver si dos combinaciones de $\psi$ son iguales siguiendo a cada uno de los objetos involucrados de arriba hacia abajo y viendo si en ambos casos terminan en la misma posición. Esta propiedad es un caso particular de la que se tiene en \cite{JS1992}, Corollary 2.6, allí se generaliza este hecho a "`braided tensor categories"', donde el grupo de trenzas ocupa el lugar que para las categorías tensoriales simétricas ocupa el grupo simétrico. La enunciamos a continuación con total precisión:

\begin{\prop}[Coherencia para categorías tensoriales simétricas] \label{cohpsi}

	En cualquier categoría tensorial simétrica, a cada flecha que sea una composición de flechas "`básicas"', donde una flecha es básica si está armada a partir de distintas $\psi$ e identidades con el funtor $\otimes$, se le puede asociar una permutación de $S_n$, donde $n$ es la cantidad de objetos que están siendo multiplicados en el dominio (y el codominio) de la flecha. Esta asignación se realiza mandando a cada flecha "`básica"' al elemento de $S_n$ que realiza las permutaciones que realizan las $\psi$ en los lugares que las realizan, y a la composición de flechas al producto en $S_n$, que corresponde a la composición de las biyecciones.
	
	Entonces, dos de estas flechas son iguales si y solo si tienen asociada la misma permutación de $S_n$.
\end{\prop}

\subsection {Funtores y transformaciones naturales tensoriales}

Definiremos qué significa para un funtor respetar la estructura de categoría tensorial [simétrica]. Informalmente hablando, un funtor será tensorial si respeta el producto tensorial, la asociatividad, [la conmutatividad] y el neutro (salvo isomorfismos). 

\begin{\de} 
	Sean $\Cat$ y $\Cat'$ dos categorías tensoriales [simétricas]. Una 3-upla $(F,f,s)$ es un funtor tensorial si 
	
\begin{enumerate}
	\item $F$ es un funtor $F: \Cat \rightarrow \Cat'$ 
	
	\item $FI$ es un neutro para el producto tensorial de $\Cat'$ y $f: I' \stackrel{\cong}{\leftrightarrow} FI$ es un isomorfismo.
	
	\item $s: \otimes \circ (F \times F) \stackrel{\cong}{\Rightarrow} F \circ \otimes$ es un isomorfismo natural. ($s_{X,Y} : F(X) \otimes F(Y) \stackrel{\cong}{\rightarrow} F(X \otimes Y)$ dice que F respeta el producto tensorial salvo isomorfismo)
	
	\item los siguientes 3[+1] diagramas conmutan.
	
\end{enumerate}
	
\end{\de}


De forma similar a la sección anterior, surgen distintas formas de ver con estos isomorfismos ($s$ y $f$) que distintas configuraciones de $F$ y $\otimes$ son isomorfas. Tenemos los siguientes diagramas que pedimos por definición que conmuten (omitiremos en general los subíndices de s):

1. \xymatrix { X \ar[r]^{F} \ar[d]_{(r_X)^{-1}} & FX  \ar[r]^{(-) \otimes I'} & FX \otimes I' \ar[d]^{FX \otimes f} \\
						X \otimes I \ar[r]^{F} & F(X \otimes I) \ar[r]^{s} & FX \otimes FI }
												
2. El diagrama análogo multiplicando por $I$ e $I'$ a izquierda.

3. \xymatrix { FX \otimes (FY \otimes FZ) \ar[r]^{FX \otimes s} \ar[d]_{\phi} & FX \otimes F (Y \otimes Z) \ar[r]^{s} & F (X \otimes (Y \otimes Z)) \ar[d]^{F(\phi)} \\
						(FX \otimes FY) \otimes FZ \ar[r]^{s \otimes FZ} & F(X \otimes Y) \otimes FZ \ar[r]^{s} & F ((X \otimes Y) \otimes Z) }

4. \xymatrix { FX \otimes FY \ar[r]^{s} \ar[d]^{\psi} & F (X \otimes Y) \ar[d]_{F(\psi)} \\
						FY \otimes FX \ar[r]^{s} & F(Y \otimes X) }




Abusaremos la notación cuando digamos que un funtor es tensorial sin explicitar los isomorfismos, y usando siempre las letras $s$ y $f$ para estos isomorfismos. Definiremos ahora qué significa para una transformación natural respetar la estructura tensorial de los funtores. 

\begin{\de}
	Sean $\Cat$ y $\Cat'$ dos categorías tensoriales [simétricas], y $F,G: \Cat \rightarrow \Cat'$ dos funtores tensoriales. Una transformación natural $\theta: F \Rightarrow G$ es tensorial si los siguientes diagramas conmutan:
	
1. \xymatrix@1 {F(X) \otimes F(Y) \ar[r]^{s} \ar[d]_{\theta_X \otimes \theta_Y} & F(X\otimes Y) \ar[d]^{\theta_{X \otimes Y}} \\
					 G(X) \otimes G(Y) \ar[r]^{s} & G(X\otimes Y)  	}						 
 \ \ \ \ \  2. \xymatrix@1 { I' \ar[r]^{f} \ar[dr]_{f} & FI \ar[d]^{\theta_U} \\
						& GI } 
\end{\de}

\begin{\de}
	Un funtor $F: \Cat \rightarrow \Cat'$ tensorial es una equivalencia de categorías tensoriales si es una equivalencia de categorías entre las categorías $\Cat$ y $\Cat'$.
\end{\de}

La definición anterior tiene su motivación en \cite{Saavedra} 1, 4.4, p.70, donde se prueba la siguiente

\begin{\prop}
	Si un funtor $F: \Cat \rightarrow \Cat'$ tensorial es una equivalencia de categorías entre las categorías $\Cat$ y $\Cat'$, entonces existe un funtor tensorial \linebreak $G: \Cat' \rightarrow \Cat$ e isomorfismos naturales tensoriales $\alpha: id_\Cat \Rightarrow GF$ y \linebreak $\beta: FG \Rightarrow id_{\Cat'}$.
\end{\prop}

También se tiene la siguiente propiedad, que no utilizaremos en este trabajo, pero que es importante pues nos dice que bajo ciertas condiciones el conjunto $End(F)$ de los endomorfismos tensoriales de un funtor tensorial es igual al grupo de sus automorfismos. Su demostración se encuentra en \cite{DM}, 1, Proposition 1.13, p.117.

\begin{\prop}
Si dos categorías tensoriales $\Cat$ y $\Cat'$ son rígidas (ver sección \ref{sub:homint}), entonces cualquier transformación natural tensorial entre dos funtores tensoriales $F,G: \Cat \rightarrow \Cat'$ es un isomorfismo tensorial.
\end{\prop}

\subsection{Categorías tensoriales estrictas} \label{sub:estrictas}

\begin{\de}
	Una categoría tensorial $(\Cat,\otimes,\phi,I,l,r)$ es una categoría tensorial estricta si $\phi,l$ y $r$ son identidades, es decir si $X \otimes (Y \otimes Z) = (X \otimes Y) \otimes Z$ y $X \otimes I = X = I \otimes X$.
\end{\de}

Toda categoría tensorial $\Cat$ es (tensorialmente) equivalente a una ca- \linebreak tegoría tensorial estricta. En efecto, definimos $st(\Cat)$ la categoría tensorial estricta que tiene como objetos a las "`palabras"' cuyas letras son los objetos de $\Cat$ (incluso la palabra vacía, que es $I$) y como producto tensorial a la concatenación. Consideremos la asignación realización $r: st(\Cat) \rightarrow \Cat$, definida en los objetos de $st(\Cat)$ como $r(C_1...C_n) = C_1 \otimes ... \otimes C_n$, donde se realizan los productos tensoriales de izquierda a derecha (es indiferente en qué orden se realizan pero es necesario prefijar uno). Luego podemos definir las flechas en $st(\Cat)$ entre dos palabras $P$ y $P'$ como las flechas entre $r(P)$ y $r(P')$, de forma tal que $r$ resulta trivialmente un funtor tensorial, y podemos definir el funtor tensorial inclusión $i: \Cat \rightarrow st(\Cat)$, $i(C)=C$. La composición $ri$ es igual a $id_\Cat$, mientras que $ir(C_1...C_n) = C_1 \otimes ... \otimes C_n$ visto como palabra de una sola letra. $\theta_{C_1...C_n}: C_1...C_n \rightarrow C_1 \otimes ... \otimes C_n$, la flecha correspondiente a la identidad (pues $r(C_1...C_n) = r(C_1 \otimes ... \otimes C_n)$), es un isomorfismo natural $\theta: id_{st(\Cat)} \Rightarrow ir$.

\pagebreak
\section {Dualidad} \label{sec:Dualidad}

\subsection {Objeto dual} \label{sub:objdual}

	Recordemos que, en $Vec_K^{<\infty}$, para cada objeto $V$ tenemos un objeto $V^{*}$, su dual, que es el espacio vectorial $Hom(V,K)$, y para cualquier base $\{v_1,...,v_n\}$ de $V$ tenemos su base dual, que es una base $\{v_1^*,...,v_n^*\}$ de $V^*$ tal que $v_i^* (v_j) = \delta_{ij}$. Es un ejercicio de álgebra lineal que, para cualquier $v \in V$ y para cualquier $\phi \in V^*$, valen las igualdades 
	\begin{equation} \label{dual1}
		v = v_1^*(v) \cdot v_1 + ... + v_n^*(v) \cdot v_n
	\end {equation}
	\begin{equation} \label{dual2}
		\phi = \phi(v_1) \cdot v_1^* + ... + \phi(v_n) \cdot v_n^* 
	\end {equation}

Notando con $\varepsilon : V^* \otimes V \rightarrow K$ a la evaluación $\varepsilon (\phi,v) = \phi(v)$ y con $\eta : K \rightarrow V \otimes V^*$ a la función lineal definida según $\eta(1) = v_1 \otimes v_1^* + ... + v_n \otimes v_n^*$ para alguna base ${v_1,...,v_n}$ de $V$, obtenemos que las igualdades \eqref{dual1} y \eqref{dual2} se pueden expresar mediante los diagramas

\xymatrix { (V\otimes V^*) \otimes V \ar[r]^{\cong} & V \otimes (V^* \otimes V) \ar[d]^{V \otimes \varepsilon} & V^* \otimes (V \otimes V^*) \ar[r]^{\cong} & (V^*  \otimes V) \otimes V^* \ar[d]^{\varepsilon \otimes V^*} \\
						K \otimes V \ar[u]^{\eta \otimes V} & V \otimes K \ar[d]^{\cong} & V^* \otimes K \ar[u]^{V^* \otimes \eta} & K \otimes V^* \ar[d]^{\cong} \\
						V \ar[u]^{\cong} \ar[r]^{id} & V & V^* \ar[u]^{\cong} \ar[r]^{id} & V^* } 

que, omitiendo los isomorfismos, nos dan las igualdades triangulares

	1. \xymatrix { & V \otimes V^* \otimes V \ar[dr]^{V \otimes \varepsilon} \\
						 V \ar[ur]^{\eta \otimes V} \ar[rr]^{id} & & V }
	\ \ \ \ 2. \xymatrix { & V^* \otimes V \otimes V^* \ar[dr]^{\varepsilon \otimes V^*} \\
						 V^* \ar[ur]^{V^* \otimes \eta} \ar[rr]^{id} & & V^* }
\\

Por último notemos que tenemos un isomorfismo entre $Hom(V,W)$ y $W \otimes V^*$ que, si $B=\{v_1,...,v_n\}$ es una base de $V$ y $B'=\{w_1,...,w_n\}$ es una base de $W$, está definido en bases por
\begin{equation} \label{isovec}
Hom(V,W) \stackrel{\cong}{\rightarrow} W \otimes V^*
\end{equation}
$${v_i}^*(-) \cdot w_j \mapsto w_j \otimes {v_i}^*$$

Cada morfismo $f$ de $Hom(V,W)$ tiene asociada una matriz $T=M_{B,B'}(f)$ que actúa multiplicando a izquierda $T \cdot (v)_B = (f(v))_{B'}$. Como \linebreak $f=\sum T_{ji} \cdot {v_i}^* \cdot w_j$, le corresponde $\sum T_{ji} \cdot w_j \otimes {v_i}^*$. Notemos que esta escritura se corresponde con la forma que tenemos de multiplicar la matriz $T$ a derecha y a izquierda respectivamente por las coordenadas en forma vertical de un vector de $V$ y las coordenadas en forma horizontal de un vector de $W^*$. Observemos finalmente que, vía el isomorfismo $V \otimes V^* \cong Hom(V,V)$, $\eta(1)$ corresponde a la $id_V$.

Todo lo anterior motiva las siguientes definiciones:

\begin{\de} \label{dual}
	Sean $\Cat$ una categoría tensorial, y $X$ e $Y$ objetos de $\Cat$. Diremos que $Y$ es dual a izquierda (también llamado adjunto en \cite{JS}) de $X$ y que $X$ es dual a derecha de $Y$, y notaremos $Y \dashv X$, si existen flechas $Y \otimes X \stackrel{\varepsilon}{\rightarrow} I$ y \linebreak $I \stackrel{\eta}{\rightarrow} X \otimes Y$ tales que los siguientes triángulos (donde se omiten 3 isomorfismos de la estructura de categoría tensorial en cada uno) conmutan.
	
	1. \xymatrix { & X \otimes Y \otimes X \ar[dr]^{X \otimes \varepsilon} \\
						 X \ar[ur]^{\eta \otimes X} \ar[rr]^{id} & & X }
	\ \ \ \ 2. \xymatrix { & Y \otimes X \otimes Y \ar[dr]^{\varepsilon \otimes Y} \\
						 Y \ar[ur]^{Y \otimes \eta} \ar[rr]^{id} & & Y }
	
	Si todo objeto de $\Cat$ tiene un dual a izquierda (derecha), diremos que $\Cat$ tiene una dualidad a izquierda (derecha).
\end {\de}

Gráficamente, las ecuaciones triangulares nos dicen

\begin{equation} \label{ectriang}
\xymatrix@C=-0.3pc @R=0.5pc{ 			& \ar@{-}[ldd] \ar@{-}[rdd] \ar@{}[dd]|{\eta} & & & X \ar@2{-}[dd] 					& & &    						& & & & & &  Y \ar@2{-}[dd] & & & \ar@{-}[ldd] \ar@{-}[rdd] \ar@{}[dd]|{\eta} & 						& & &  \\
											& & & & & & & X \ar@2{-}[dd] & & & & & & & & & & & & &  Y \ar@2{-}[dd] \\
											X \ar@2{-}[dd] & & Y \ar@{-}[rdd] & \ar@{}[dd]|{\varepsilon} & X \ar@{-}[ldd] & \textcolor{white}{XX} = \textcolor{white}{XX} & &  & & & \textcolor{white}{XXX} \hbox{y} \textcolor{white}{XXX} & & & Y \ar@{-}[ddr] & \ar@{}[dd]|{\varepsilon} & X \ar@{-}[ldd] & & Y \ar@2{-}[dd] & \textcolor{white}{XX} = \textcolor{white}{XX} & &  \\
											& & & & & & & X & & & & & & & & & & & & &  Y \\
											X & & & & 																															& & &    							& & & & & & 		& & & & Y	}
\end{equation}

Abusaremos la notación llamando siempre $\eta$ y $\varepsilon$ a estas flechas, aunque correspondan a dualidades de objetos diferentes, pues su dominio y codominio las distinguirán. También usaremos, como es habitual, estas mismas letras para la unidad y counidad de la mayoría de las adjunciones, pero esto tampoco debería llevar a confusiones pues estas son transformaciones naturales. Las flechas $\eta_X$, o $\varepsilon_X$, entonces, corresponderán siempre a estas t.n. y no a las flechas de la dualidad.

Para toda propiedad que probemos para los duales a izquierda tendremos la análoga para duales a derecha. No explicitaremos esto en todas las propiedades, pero es necesario tenerlo presente pues usaremos más adelante también duales a derecha. 

Veremos a continuación la unicidad del dual a izquierda de $X$ salvo por un isomorfismo canónico. Luego no habrá problema en notar con $X^\lor$ al dual a izquierda de $X$, y con $X^\wedge$ a su dual a derecha. 

\begin{\prop} \label{unicodual}
	Sean $\Cat$ una categoría tensorial, y $X$, $Y$ e $Y'$ objetos de $\Cat$ tales que $Y$ e $Y'$ son duales a izquierda de $X$, con flechas $\varepsilon$, $\eta$, $\varepsilon'$ y $\eta'$. Entonces \linebreak $Y' \stackrel{Y' \otimes \eta}{\longrightarrow} Y' \otimes X \otimes Y \stackrel{\varepsilon' \otimes Y}{\longrightarrow} Y$ es un isomorfismo con inversa \linebreak $Y \stackrel{Y \otimes \eta'}{\longrightarrow} Y \otimes X \otimes Y' \stackrel{\varepsilon \otimes Y'}{\longrightarrow} Y'$.
\end{\prop}

\begin{proof}
Tenemos que verificar que $$Y' \stackrel{Y'\otimes\eta}{\longrightarrow} Y' \otimes X \otimes Y \stackrel{\varepsilon'\otimes Y}{\longrightarrow} Y \stackrel{Y \otimes \eta'}{\longrightarrow} Y \otimes X \otimes Y' \stackrel{\varepsilon \otimes Y'}{\longrightarrow} Y'$$ es la identidad de $Y'$ (la otra composición es análoga). La forma más clara de ver esto es traducir esta composición de flechas al cálculo de ascensores

\ \ \ \ \xymatrix@C=-0.3pc @R=0.5pc {  Y' \ar@2{-}[dd] & & & \ar@{-}[ldd] \ar@{-}[rdd] \ar@{}[dd]|{\eta} & & & & & 						   & & Y' \ar@2{-}[dd] & & & & & & & \ar@{-}[ldd] \ar@{-}[rdd] \ar@{}[dd]|{\eta'} & 																		 & & & & & &   & &  \\
											\\
					 					 Y' \ar@{-}[ddr] & \ar@{}[dd]|{\varepsilon'} & X \ar@{-}[ldd] & & Y \ar@2{-}[dd] & & & &   &  & Y' \ar@2{-}[dd] & & & \ar@{-}[ldd] \ar@{-}[rdd] \ar@{}[dd]|{\eta} & & & X \ar@2{-}[dd] & & Y' \ar@2{-}[dd]  &  & Y' \ar@2{-}[dd] & & & \ar@{-}[ldd] \ar@{-}[rdd] \ar@{}[dd]|{\eta'} &   & &  \\
					 					 & & &  & & & & & 						   & & & & & & & & & & 			 & & & & & &  & & Y' \ar@2{-}[dd] \\
					 					 & & & & Y \ar@2{-}[dd] & & & \ar@{-}[ldd] \ar@{-}[rdd] \ar@{}[dd]|{\eta'} & & \textcolor{white}{X} = \textcolor{white}{X} & Y' \ar@2{-}[dd] & & X \ar@2{-}[dd] & & Y \ar@{-}[rdd] & \ar@{}[dd]|{\varepsilon} & X \ar@{-}[ldd] & & Y' \ar@2{-}[dd]   & \textcolor{white}{X} = \textcolor{white}{X} & Y' \ar@{-}[ddr] & \ar@{}[dd]|{\varepsilon'} & X \ar@{-}[ldd] & & Y' \ar@2{-}[dd] & \textcolor{white}{X} = \textcolor{white}{X} & \\
					 					 & & &  & & & & & 						   & & & & & & & & & & 			 & & & & & &  & & Y' \\
					 					 & & & & Y \ar@{-}[rdd] & \ar@{}[dd]|{\varepsilon} & X \ar@{-}[ldd] & & Y' \ar@2{-}[dd]    & & Y' \ar@{-}[ddr] & \ar@{}[dd]|{\varepsilon'} & X \ar@{-}[ldd] & & & & & & Y' \ar@2{-}[dd] & & & & & & Y' \\						 
					 					 \\
					 					 & & & & & & & & Y'																																		 & & & & & & & & & & Y' }		
					 					 
La primer igualdad se justifica por el diagrama \eqref{ascensor}, que nos permite subir el $\eta'$ y bajar el $\varepsilon'$ de manera tal de juntar a $\eta$ y $\varepsilon$ en el centro del diagrama. Luego, por las igualdades triangulares expuestas en forma de diagrama en \eqref{ectriang}, se justifican la segunda y tercer igualdad.
\end{proof}

Podemos realizar entonces la siguiente definición de notación.

\begin{\de}
	Sea $\Cat$ una categoría tensorial, y $X$ un objeto de $\Cat$. Si existen, notamos con $X^\lor$ al dual a izquierda de $X$ y con $X^\wedge$ al dual a derecha de $X$.
\end{\de}

La siguiente propiedad es corolario inmediato de la anterior y del hecho de que por definición $X$ es dual a derecha de $X^\lor$.

\begin{\prop} \label{dualdosveces}
  Sean $\Cat$ una categoría tensorial, y $X$ un objeto de $\Cat$ tal que existe su dual $X^\lor$. Entonces $X = {X^\lor}^\wedge$. Análogamente $X = {X^\wedge}^\lor$ si existe $X^\wedge$.
\end{\prop}

\begin{\prop} \label{dualXporY}
	Sean $\Cat$ una categoría tensorial, y $X$ e $Y$ objetos de $\Cat$ tales que existen sus duales $X^\lor $ e $Y^\lor $. Entonces $(X \otimes Y)^\lor  = Y^\lor  \otimes X^\lor $
\end{\prop}

\begin{proof}
	Veremos que $Y^\lor \otimes X^\lor$ es un dual a izquierda de $X \otimes Y$:
	
	Tenemos $Y^\lor \otimes X^\lor \otimes X \otimes Y \stackrel{Y^\lor \otimes \varepsilon \otimes Y}{\longrightarrow} Y^\lor \otimes Y \stackrel{\varepsilon}{\rightarrow} I$ y \linebreak $I \stackrel{\eta}{\rightarrow} X \otimes X^\lor \stackrel{X \otimes \eta \otimes X^\lor}{\longrightarrow} X \otimes Y \otimes Y^\lor \otimes Y$. Dejamos por verificar las propiedades triangulares.
\end{proof}

Notemos la similitud que existe entre la definición de objeto dual y la de funtores adjuntos. La siguiente proposición relaciona ambos conceptos.

\begin{\prop} \label{CadjC*}
	Sean $\Cat$ una categoría tensorial, y $C$ un objeto de $\Cat$ tal que existe su dual a izquierda $C^\lor $. Entonces se tiene la adjunción \xymatrix@1 { {\Cat} \ar@/^/[r]^{(-)\otimes C}_{\bot} &  {\Cat} \ar@/^/[l]^{(-)\otimes C^\lor } }
\end{\prop}

\begin{proof}
	A partir de las flechas $I \stackrel{\eta}{\rightarrow} C \otimes C^\lor$ y $C^\lor  \otimes C \stackrel{\varepsilon}{\rightarrow} I$ construimos la unidad $\eta: id_{\Cat} \Rightarrow (-) \otimes C \otimes C^\lor $ y la counidad $\varepsilon: (-) \otimes C^\lor  \otimes C \Rightarrow id_{\Cat}$ de la adjunción de la siguiente manera: 
	
	$\eta_X : X \stackrel{X \otimes \eta}{\longrightarrow} X \otimes C \otimes C^\lor $, \ \ \ \ $\varepsilon_X : X \otimes C^\lor  \otimes C \stackrel{X \otimes \varepsilon}{\longrightarrow} X$
	
	Veremos que una de las igualdades triangulares de la dualidad implican una de las de la adjunción, y la otra será análoga. 
	
	En efecto, debemos verificar que $$X \otimes C \stackrel{X \otimes \eta \otimes C}{\longrightarrow} X \otimes C \otimes C^\lor  \otimes C \stackrel{X \otimes C \otimes \varepsilon}{\longrightarrow} X \otimes C$$ es la identidad de $X \otimes C$, pero esto es así pues es la primer igualdad triangular de la dualidad $C^\lor \dashv C$ multiplicada a izquierda por $X$.
\end{proof}

\begin{remark}
	Es pertinente observar para evitar confusiones que si bien por notación tenemos $C^\lor \dashv C \dashv C^\wedge$, el dual a izquierda da el adjunto a derecha como está enunciado en la propiedad anterior, $(-) \otimes C \dashv (-) \otimes C^\lor$. Análogamente se tiene $C \otimes (-)  \dashv C^\wedge \otimes (-)$. 
\end{remark}

Duales a derecha y a izquierda en general no coinciden, pero sí lo hacen en las categorías tensoriales simétricas. Para ver esto probaremos primero el siguiente lema, que también utilizaremos más adelante.

\begin{lemma} \label{etapsi}
	Sean $\Cat$ una categoría tensorial simétrica, y $C$ un objeto que tiene un dual a izquierda $C^\lor$. Entonces para todo objeto $Y$ el siguiente cuadrado conmuta
	
\ \ \ \ \ \ \ \ \ \ \ \ \ \ \ \ \ \ \ \ \ \ \ \	\xymatrix{ Y \ar[r]^{Y \otimes \eta} \ar[d]_{\eta \otimes Y} & Y \otimes C \otimes C^\lor \ar[d]^{\psi \otimes C^\lor} \\
						 C \otimes C^\lor \otimes Y \ar[r]^{C \otimes \psi} & C \otimes Y \otimes C^\lor }

Con demostración análoga conmuta el cuadrado

\ \ \ \ \ \ \ \ \ \ \ \ \ \ \ \ \ \ \ \ \ \ \ \	\xymatrix{ C^\lor \otimes Y \otimes C \ar[r]^{C^\lor \otimes \psi} \ar[d]_{\psi \otimes C} & C^\lor \otimes C \otimes Y \ar[d]^{\varepsilon \otimes Y} \\
						 Y \otimes C^\lor \otimes C \ar[r]^{Y \otimes \varepsilon} & Y  }

En forma gráfica:

\ \ \ \ \ \ \ \ \ \ \ \ \ \ \ \ \ \ \ \ \ \ \xymatrix@C=-0.3pc {Y \ar@2{-}[d] & & & \ar@{-}[ld] \ar@{-}[rd] \ar@{}[d]|{\eta} &  					 & & & 				& \ar@{-}[ld] \ar@{-}[rd] \ar@{}[d]|{\eta} & & & Y \ar@2{-}[d] \\
									Y \ar@2{-}[rrd]|{\psi} & & C \ar@2{-}[lld]|{\psi} & & C^\lor \ar@2{-}[d]    & \textcolor{white}{XX} = \textcolor{white}{XX} & & 				C \ar@2{-}[d] & & C^\lor \ar@2{-}[rrd]|{\psi} & & Y \ar@2{-}[lld]|{\psi}\\
									C & & Y & & C^\lor 																											 & & &	C & & Y & & C^\lor }

y uno análogo para $\varepsilon$:

\ \ \ \ \ \ \ \ \ \ \ \ \ \ \ \ \ \ \ \ \ \ \xymatrix@C=-0.3pc {C^\lor \ar@2{-}[rrd]|{\psi} & & Y \ar@2{-}[lld]|{\psi} & & C \ar@2{-}[d]    & & & 				C^\lor \ar@2{-}[d] & & Y \ar@2{-}[rrd]|{\psi} & & C \ar@2{-}[lld]|{\psi} \\
Y \ar@2{-}[d] & & C^\lor \ar@{-}[rd] & \ar@{}[d]|{\varepsilon} & C \ar@{-}[ld]  					 & \textcolor{white}{XX} = \textcolor{white}{XX} & & 				C^\lor \ar@{-}[rd] & \ar@{}[d]|{\varepsilon} & C \ar@{-}[ld] & & Y \ar@2{-}[d] \\
Y & & & &  																											 & & & & & & & Y }
\end{lemma}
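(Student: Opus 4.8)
The plan is to derive the square from the naturality of $\psi$ together with the coherence theorem (Proposition \ref{cohpsi}); in the language of the elevator calculus this is nothing more than sliding the coevaluation cup $\eta$ across the $Y$-strand. I would prove the first square and remark that the second is entirely dual, obtained by the same argument with $\varepsilon : C^\lor \otimes C \to I$ in place of $\eta : I \to C \otimes C^\lor$.

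First I would apply the naturality of $\psi$ (the square \eqref{psinat}, drawn as \eqref{swap}) to the pair of arrows $\eta : I \to C \otimes C^\lor$ and $\mathrm{id}_Y$. Since $\psi_{I,Y}$ is the identity of $Y$ by coherence with the unit, this gives
$$\psi_{C \otimes C^\lor,\, Y} \circ (\eta \otimes Y) = Y \otimes \eta.$$
Next I would rewrite the block swap $\psi_{C \otimes C^\lor,\, Y}$, which moves the pair $C \otimes C^\lor$ past $Y$, as the composite of two elementary swaps $(\psi_{C,Y} \otimes C^\lor) \circ (C \otimes \psi_{C^\lor,Y})$. This is exactly the type of identity supplied by the hexagon \eqref{hex} and, in general, by the coherence of Proposition \ref{cohpsi}, since both sides realize the same permutation of the three factors. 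Substituting this decomposition yields
$$(\psi_{C,Y} \otimes C^\lor) \circ (C \otimes \psi_{C^\lor,Y}) \circ (\eta \otimes Y) = Y \otimes \eta.$$

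To finish I would precompose on the left with $\psi_{Y,C} \otimes C^\lor$ and use that $\psi$ is its own inverse (equation \eqref{psipsi}), so that $\psi_{Y,C} \circ \psi_{C,Y} = \mathrm{id}_{C \otimes Y}$ cancels the leading factor. What remains is
$$(C \otimes \psi_{C^\lor,Y}) \circ (\eta \otimes Y) = (\psi_{Y,C} \otimes C^\lor) \circ (Y \otimes \eta),$$
which is precisely the commutativity of the square, the bottom-left path equalling the top-right one. Graphically the whole computation collapses to a single move: by \eqref{swap} the cup $\eta$ slides freely past the $Y$-strand, so crossing $Y$ over the left leg $C$ is the same as crossing the right leg $C^\lor$ over $Y$.

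I expect the only delicate point to be the passage from the block swap $\psi_{C \otimes C^\lor,\, Y}$ to the two elementary crossings: one must invoke the coherence for symmetric tensor categories (Proposition \ref{cohpsi}, ultimately the hexagon axiom) to be certain that the two elementary $\psi$'s compose to the genuine swap, and not merely to some other arrow realizing the same permutation up to associativity and unit constraints. Everything else is routine naturality plus $\psi^2 = \mathrm{id}$, and in the strict elevator picture these subtleties vanish, leaving the proof as one slide of the cup.
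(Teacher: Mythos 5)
Your proof is correct and follows essentially the same route as the paper's: naturality of $\psi$ applied to the pair $\eta$, $\mathrm{id}_Y$ (together with the unit coherence $\psi_{I,Y}=\mathrm{id}_Y$), followed by the coherence of Proposition \ref{cohpsi} (the hexagon) to split the block swap $\psi_{C\otimes C^\lor,\,Y}$ into the two elementary crossings. The only difference is cosmetic --- the paper slides $\eta$ in the opposite direction and absorbs the final cancellation into a single appeal to coherence instead of invoking $\psi^2=\mathrm{id}$ explicitly --- so the two arguments coincide.
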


\begin{proof}

	La naturalidad de $\psi$ aplicada a las flechas $f=id_Y$, $f'=\eta$ (ver \ref{psinat}), junto con el hecho de que $\psi_{Y,I} = id_Y$, nos dice que

\ \ \ \ \ \ \ \ \ \ \ \ \ \ \ \ \ \ \ \ \ \ \xymatrix@C=-0.3pc { Y \ar@2{-}[d] & & & \ar@{-}[ld] \ar@{-}[rd] \ar@{}[d]|{\eta} &  			& & & 							& & & & Y \ar@2{-}[d]			\\
										Y \ar@2{-}[rrrrd]|{\psi} & & C \ar@2{-}[lld] & & C^\lor \ar@2{-}[lld] & \textcolor{white}{XX} = \textcolor{white}{XX} & & 	& \ar@{-}[ld] \ar@{-}[rd] \ar@{}[d]|{\eta} & & & Y \ar@2{-}[d]			\\
										C & & C^\lor & & Y 																										& & &	 							C & & C^\lor & & Y			}

	Componiendo con el isomorfismo $id_C \otimes \psi$ obtenemos

\ \ \ \ \ \ \ \ \ \ \ \ \ \ \ \ \ \ \ \ \ \ \xymatrix@C=-0.3pc { Y \ar@2{-}[d] & & & \ar@{-}[ld] \ar@{-}[rd] \ar@{}[d]|{\eta} &  			& & & 							& & & & Y \ar@2{-}[d]			\\
										Y \ar@2{-}[rrrrd]|{\psi} & & C \ar@2{-}[lld] & & C^\lor \ar@2{-}[lld] & \textcolor{white}{XX} = \textcolor{white}{XX} & & 	& \ar@{-}[ld] \ar@{-}[rd] \ar@{}[d]|{\eta} & & & Y \ar@2{-}[d]			\\
										C \ar@2{-}[d] & & C^\lor \ar@2{-}[rrd]|{\psi} & & Y \ar@2{-}[lld]|{\psi}					& & &	 		C \ar@2{-}[d] & & C^\lor \ar@2{-}[rrd]|{\psi} & & Y	\ar@2{-}[lld]|{\psi} 		\\
										C & & Y & & C^\lor									& & &				C & & Y & & C^\lor
										}

	que, por la coherencia para las $\psi$ como está enunciada en la propiedad \ref{cohpsi}, resulta en la igualdad deseada.
\end{proof}

\begin{\prop} \label{DualEnSimetrica}
	Sean $\Cat$ una categoría tensorial simétrica, y $C$ un objeto que tiene un dual a izquierda $C^\lor$. Entonces $C^\lor$ es también dual a derecha de $C$, con $\eta = \psi \circ \eta$ y $\varepsilon = \varepsilon \circ \psi$. En particular, por la propiedad \ref{dualdosveces}, ${C^\lor}^\lor = C$.
\end{\prop}

\begin{proof}

	Verifiquemos una de las igualdades triangulares. Debemos ver que el primero de los diagramas siguientes se colapsa a la identidad de $C^\lor$.
	
\ \ \ \ \ \ \ \ \ \ \ \ \ \ \ \ \ \ \ \ \ \ \xymatrix@C=-0.3pc{ & \ar@{-}[ld] \ar@{-}[rd] \ar@{}[d]|{\eta} & & & C^\lor \ar@2{-}[d] & & &   & \ar@{-}[ld] \ar@{-}[rd] \ar@{}[d]|{\eta} & & & C^\lor \ar@2{-}[d]\\
								 C \ar@2{-}[rrd]|{\psi} & & C^\lor \ar@2{-}[lld]|{\psi} & & C^\lor \ar@2{-}[d] & & & C \ar@2{-}[d] & & C^\lor \ar@2{-}[rrd]|{\psi} & & C^\lor \ar@2{-}[lld]|{\psi} \\	
								 C^\lor \ar@2{-}[d] & & C \ar@2{-}[rrd]|{\psi} & & C^\lor \ar@2{-}[lld]|{\psi} & \textcolor{white}{XX} = \textcolor{white}{XX} & &  C \ar@2{-}[rrrrd]|{\psi} & & C^\lor \ar@2{-}[lld] & & C^\lor \ar@2{-}[lld] \\							 
								 C^\lor \ar@2{-}[d] & & C^\lor \ar@{-}[rd] & \ar@{}[d]|{\varepsilon} & C \ar@{-}[ld] & & &  C^\lor \ar@2{-}[rrd]|{\psi} & & C^\lor \ar@2{-}[lld]|{\psi} & & C \ar@2{-}[d] \\							 
								 C^\lor & & & & & & & C^\lor \ar@2{-}[d] & & C^\lor \ar@{-}[rd] & \ar@{}[d]|{\varepsilon} & C \ar@{-}[ld] \\
								 & & & & & & & 		C^\lor & & & & & & & &  } 

	 La igualdad la obtenemos gracias a la coherencia para las $\psi$ expresada en la propiedad \ref{cohpsi}. Ahora aplicando el lema \ref{etapsi} nos queda el diagrama
	
\ \ \ \ \ \ \ \ \ \xymatrix@C=-0.3pc @R=0.5pc{ C^\lor \ar@2{-}[dd] & & & \ar@{-}[ldd] \ar@{-}[rdd] \ar@{}[dd]|{\eta} & 		& & & 		&&&&    & & & 		 \\
									\\
								 C^\lor \ar@2{-}[rrdd]|{\psi} & & C \ar@2{-}[lldd]|{\psi} & & C^\lor \ar@2{-}[dd] &  & &  &&&&	&  & &  \\	
									& & & &   & & &   C^\lor \ar@2{-}[dd] & & & \ar@{-}[ldd] \ar@{-}[rdd] \ar@{}[dd]|{\eta} & \\								 
								 C \ar@2{-}[rrrrdd]|{\psi} & & C^\lor \ar@2{-}[lldd] & & C^\lor \ar@2{-}[lldd] & & & &&&& & & & C^\lor \ar@2{-}[dd] \\							 
									& & & &   & \textcolor{white}{XX} = \textcolor{white}{XX} & & C^\lor \ar@{-}[rdd] & \ar@{}[dd]|{\varepsilon} & C \ar@{-}[ldd] & \ \ & C^\lor \ar@2{-}[dd] & \textcolor{white}{XX} = \textcolor{white}{XX} & &  \\								 
								 C^\lor \ar@2{-}[dd] & & C^\lor \ar@2{-}[rrdd]|{\psi} & & C \ar@2{-}[lldd]|{\psi} & & & & & & & & & & C^\lor \\							 
									& & & &   & & & & & & & C^\lor \\								
								 C^\lor \ar@{-}[rdd] & \ar@{}[dd]|{\varepsilon} & C \ar@{-}[ldd] & & C^\lor \ar@2{-}[dd] & & & & & & & \\							 
									\\								 
								 & & & & C^\lor  }
								 
	que como se ve arriba se colapsa a la identidad de $C^\lor$. La primer igualdad es válida nuevamente por las propiedades de coherencia de las $\psi$, y la segunda es la segunda igualdad triangular de la dualidad $C^\lor \dashv C$.
\end{proof}

\subsection{Dualidad global} \label{subsec:DualidadGlobal}

Veremos ahora que, si todo objeto de $\Cat$ tiene un dual a izquierda, esta asignación resulta funtorial. Recordemos primero, a modo de motivación, la construcción en $Vec_K$ de la transformación lineal adjunta.

Dada $T: V \rightarrow W$ una t. lineal, definimos $T^* : W^* \rightarrow V^*$ según $T^*(\phi)(v) = \phi(T(v))$. Sea $\{v_1,...,v_n\}$ una base de $V$, y $\{v_1^*,...,v_n^*\}$ su base dual. Cualquier $v \in V$ se escribe $v = v_1^*(v) \cdot v_1 + ... + v_n^*(v) \cdot v_n$, luego $\phi(T(V)) = \phi(T(v_1)) \cdot v_1^*(v) +...+ \phi(T(v_n)) \cdot v_n^*(v)$. Es decir, 
$$T^*(\phi) = \phi(T(v_1)) \cdot v_1^* +...+ \phi(T(v_n)) \cdot v_n^*$$
 Esto muestra que podemos armar $T^*$ como la composición de las siguientes flechas:
\[	 W^* \stackrel{W^* \otimes \eta}{\longrightarrow} W^* \otimes V \otimes V^* \stackrel{W^* \otimes T \otimes V^*}{\longrightarrow} W^* \otimes W \otimes V^* \stackrel{\varepsilon \otimes V^*}{\longrightarrow} V^* \]
\[	\phi \mapsto \phi \otimes (v_1 \otimes v_1^* +...+ v_n \otimes v_n^*) \mapsto \phi \otimes (T(v_1) \otimes v_1^* +...+ T(v_n) \otimes v_n^*) \mapsto \]
\[ \ \ \ \ \ \ \ \ \ \ \ \ \ \ \ \ \ \ \ \ \ \ \ \ \ \ \ \ \ \ \ \ \ \ \ \ \ \ \ \ \ \ \ \ \ \ \ \ \mapsto \phi(T(v_1)) \cdot v_1^* +...+ \phi(T(v_n)) \cdot v_n^* \]

\begin{\prop} 
Sea $\Cat$ una categoría tensorial. Si todo objeto $X$ de $\Cat$ tiene un dual a izquierda $X^\lor $, se tiene un funtor $F: \Cat \rightarrow \Cat$ contravariante tal que $F(X)=X^\lor$ (y análogamente si todo objeto tiene un dual a derecha tenemos otro funtor contravariante $G(X)=X^\wedge$)
\end{\prop}

\begin{proof}
Mostramos la funtorialidad de $F$ (la de $G$ es análoga). Dada $f : Y \rightarrow X$ en $\Cat$, debo definir $F (f) : X^\lor  \rightarrow Y^\lor $.
	
	Como si estuviéramos en $Vec_K$, definimos $F(f)$ como la composición
	
	\label{adjuntaabstracta}
	\[ 	X^\lor  \stackrel{X^\lor  \otimes \eta}{\longrightarrow} X^\lor  \otimes Y \otimes Y^\lor  \stackrel{X^\lor  \otimes f \otimes Y^\lor }{\longrightarrow} X^\lor  \otimes X \otimes Y^\lor  \stackrel{\varepsilon \otimes Y^\lor }{\longrightarrow} Y^\lor   \]
	
	Debemos verificar ahora que, si tenemos también $g : Z \rightarrow Y$ en $\Cat$, entonces $F(f \circ g) = F(g) \circ F(f)$. $F(f \circ g)$ es la composición

		 \label{adjuntaabstracta2}
		\[ 	X^\lor  \stackrel{X^\lor  \otimes \eta}{\longrightarrow} X^\lor  \otimes Z \otimes Z^\lor  \stackrel{X^\lor  \otimes (f \circ g) \otimes Z^\lor }{\longrightarrow} X^\lor  \otimes X \otimes Z^\lor  \stackrel{\varepsilon \otimes Z^\lor }{\longrightarrow} Z^\lor  \]
	
	Mientras que $F(g) \circ F(f)$ es la composición
	
	 \label{adjuntaabstracta3}
	\[	X^\lor  \stackrel{X^\lor  \otimes \eta}{\longrightarrow} X^\lor  \otimes Y \otimes Y^\lor  \stackrel{X^\lor  \otimes f \otimes Y^\lor }{\longrightarrow} X^\lor  \otimes X \otimes Y^\lor  \stackrel{\varepsilon \otimes Y^\lor }{\longrightarrow} Y^\lor  \stackrel{Y^\lor  \otimes \eta}{\longrightarrow} Y^\lor  \otimes Z \otimes Z^\lor  ... \]
		
  \[	\ \ \ \ \ \ \ \ \ \ \ \ \ \ \ \ \ \ \ \ \ \ \ \ \ \ \ \ \ \ \ \ ... Y^\lor  \otimes Z \otimes Z^\lor  \stackrel{Y^\lor  \otimes g \otimes Z^\lor }{\longrightarrow} Y^\lor  \otimes Y \otimes Z^\lor  \stackrel{\varepsilon \otimes Z^\lor }{\longrightarrow} Z^\lor  	\]

Estas dos composiciones son iguales por la primera igualdad triangular de la dualidad de $Y$. Nuevamente utilizaremos el cálculo de ascensores, como en la propiedad \ref{unicodual}. La segunda composición se traduce en

\xymatrix@C=-0.3pc{ X^\lor \ar@2{-}[d] & & & \ar@{-}[rd] \ar@{}[d]|{\eta} \ar@{-}[ld] & & & & & 	& & & 		X^\lor \ar@2{-}[d] & & & & & & & \ar@{-}[ld] \ar@{-}[rd] \ar@{}[d]|{\eta} & 		& & & 		X^\lor  \ar@2{-}[d] & & & \ar@{-}[ld] \ar@{-}[rd] \ar@{}[d]|{\eta} & \\
								 X^\lor \ar@2{-}[d] & & Y {\ar@<4pt>@{-}'+<0pt,-6pt>[d] \ar@<-4pt>@{-}'+<0pt,-6pt>[d]^{f}} & & Y^\lor  \ar@2{-}[d] & & & & 			& & & 			X^\lor  \ar@2{-}[d] & & & & & & Z {\ar@<4pt>@{-}'+<0pt,-6pt>[d] \ar@<-4pt>@{-}'+<0pt,-6pt>[d]^{g}} & & Z^\lor \ar@2{-}[d] 		& & & 		X^\lor  \ar@2{-}[d] & & Z {\ar@<4pt>@{-}'+<0pt,-6pt>[d] \ar@<-4pt>@{-}'+<0pt,-6pt>[d]^{g}} & &  Z^\lor  \ar@2{-}[d] \\	
								 X^\lor \ar@{-}[rd] & \ar@{}[d]|{\varepsilon} & X \ar@{-}[ld] & & Y^\lor  \ar@2{-}[d] & & & & 		& \textcolor{white}{XX} = \textcolor{white}{XX} & & 			X^\lor  \ar@2{-}[d] & & & \ar@{-}[ld] \ar@{}[d]|{\eta} \ar@{-}[rd] & & & Y \ar@2{-}[d] & & Z^\lor  \ar@2{-}[d]		& \textcolor{white}{XX} = \textcolor{white}{XX} & & 		X^\lor  \ar@2{-}[d] & & Y {\ar@<4pt>@{-}'+<0pt,-6pt>[d] \ar@<-4pt>@{-}'+<0pt,-6pt>[d]^{f}} & &  Z^\lor  \ar@2{-}[d] 	\\							 
								 & & & & Y^\lor  \ar@2{-}[d] & & & \ar@{-}[rd] \ar@{-}[ld] \ar@{}[d]|{\eta} & 				& & & 			X^\lor  \ar@2{-}[d] & & Y \ar@2{-}[d] & & Y^\lor  \ar@{-}[rd] & \ar@{}[d]|{\varepsilon} & Y \ar@{-}[ld] & &  Z^\lor  \ar@2{-}[d]		& & & 		X^\lor  \ar@{-}[rd] & \ar@{}[d]|{\varepsilon} & X \ar@{-}[ld] & &  Z^\lor  \ar@2{-}[d]	\\							 
								 & & & & Y^\lor  \ar@2{-}[d] & & Z {\ar@<4pt>@{-}'+<0pt,-6pt>[d] \ar@<-4pt>@{-}'+<0pt,-6pt>[d]^{g}} & & Z^\lor  \ar@2{-}[d] 			& & & 		X^\lor  \ar@2{-}[d]  & & Y {\ar@<4pt>@{-}'+<0pt,-6pt>[d] \ar@<-4pt>@{-}'+<0pt,-6pt>[d]^{f}} & & & & & & Z^\lor  \ar@2{-}[d]			& & & 		  		& & & & Z^\lor	\\							 
								 & & & & Y^\lor  \ar@{-}[rd] & \ar@{}[d]|{\varepsilon} & Y \ar@{-}[ld] & & Z^\lor  \ar@2{-}[d] 				& & & 	X^\lor  \ar@{-}[rd] & \ar@{}[d]|{\varepsilon} & X \ar@{-}[ld] & & & &  &  & Z^\lor \ar@2{-}[d]			& & & 		\\	
								 & & & & & & & & Z^\lor 		& & & 					& & & & & & & & Z^\lor }

	Para la primera igualdad, usando \eqref{ascensor}, juntamos la $\eta$ y la $\varepsilon$ correspondientes a la dualidad $Y^\lor \dashv Y$ en el medio del diagrama corriendo hacia arriba o hacia abajo el resto de las flechas. Luego la segunda igualdad se justifica por una igualdad triangular de esta dualidad. El diagrama de la derecha resulta exactamente el que corresponde a la primera composición de flechas.
\end{proof}

\begin{\de}
	Dada una flecha $f: X \rightarrow Y$ en $\Cat$, notaremos con $f^\lor: Y^\lor \rightarrow X^\lor$ a $F(f)$ y con $f^\wedge: Y^\wedge \rightarrow X^\wedge$ a $G(f)$
\end{\de}

La siguiente propiedad la usaremos más adelante. 

\begin{\prop} \label{wedgeswitch}
Sea $\Cat$ una categoría tensorial con una dualidad a derecha. Para toda flecha $f: X \rightarrow Y$ el cuadrado 

\ \ \ \ \ \ \ \ \ \ \ \ \xymatrix { X \otimes Y^\wedge \ar[r]^{X \otimes f^\wedge} \ar[d]_{f \otimes Y^\wedge} & X \otimes X^\wedge \ar[d]^{\varepsilon} \\
							Y \otimes Y^\wedge \ar[r]^{\varepsilon} & I }

	conmuta. Análogamente conmuta el cuadrado
	
\ \ \ \ \ \ \ \ \ \ \ \ \xymatrix { I \ar[d]_{\eta}	\ar[r]^{\eta} & X^\wedge \otimes X \ar[d]^{X^\wedge \otimes f} \\
																		Y^\wedge \otimes Y \ar[r]^{f^\wedge \otimes Y} & X^\wedge \otimes Y}
																		
En forma gráfica:

\ \ \ \ \ \ \ \ \ \ \ \ \ \ \ \ \ \ \ \ \ \ \xymatrix@C=-0.3pc {  X {\ar@<4pt>@{-}'+<0pt,-6pt>[d]        
                  \ar@<-4pt>@{-}'+<0pt,-6pt>[d]^{f}} & & Y^\wedge \ar@2{-}[d] 											& & & 			X \ar@2{-}[d] & & Y^\wedge {\ar@<4pt>@{-}'+<0pt,-6pt>[d]    
                  \ar@<-4pt>@{-}'+<0pt,-6pt>[d]^{f^\wedge}}\\
								 Y \ar@{-}[dr] & \ar@{}[d]|{\varepsilon} & Y^\wedge \ar@{-}[ld] & \textcolor{white}{XX} = \textcolor{white}{XX} & & X \ar@{-}[dr] & \ar@{}[d]|{\varepsilon} & X^\wedge \ar@{-}[ld]\\
								 & & 																														& & & 											& & }

y uno análogo para $\eta$:

\ \ \ \ \ \ \ \ \ \ \ \ \ \ \ \ \ \ \ \ \ \ \xymatrix@C=-0.3pc {& \ar@{-}[dr] \ar@{}[d]|{\eta} \ar@{-}[ld] & & & & & \ar@{-}[dr] \ar@{}[d]|{\eta} \ar@{-}[ld] \\
Y^\wedge {\ar@<4pt>@{-}'+<0pt,-6pt>[d] \ar@<-4pt>@{-}'+<0pt,-6pt>[d]^{f^\wedge}} & & Y \ar@2{-}[d] & \textcolor{white}{XX} = \textcolor{white}{XX} & & X^\wedge \ar@2{-}[d] & & X {\ar@<4pt>@{-}'+<0pt,-6pt>[d] \ar@<-4pt>@{-}'+<0pt,-6pt>[d]^{f}} \\
X^\wedge & & Y & & & X^\wedge & & Y }	\end{\prop}

\begin{remark}
	Con las definiciones \ref{def:wedge} y \ref{def:cowedge} de más adelante, esta propiedad equivale a decir que $\eta$ y $\varepsilon$ son di-conos.
\end{remark}

\begin{proof}	
	
	 Por la definición de $f^\wedge$, tenemos que la composición \linebreak $\varepsilon \circ (X \otimes f^\wedge)$ se corresponde con el diagrama
	
 \ \ \ \ \ \ \xymatrix@C=-0.3pc{ X \ar@2{-}[d] & & & \ar@{-}[ld] \ar@{-}[rd] \ar@{}[d]|{\eta} & & & Y^\wedge \ar@2{-}[d] & & & 		X \ar@2{-}[d] & & & \ar@{-}[ld] \ar@{-}[rd] \ar@{}[d]|{\eta} & & & Y^\wedge \ar@2{-}[d] &	& & 		X {\ar@<4pt>@{-}'+<0pt,-6pt>[d] \ar@<-4pt>@{-}'+<0pt,-6pt>[d]^{f}} & & Y^\wedge \ar@2{-}[d]	\\
								 X \ar@2{-}[d] & & X^\wedge \ar@2{-}[d] & & X {\ar@<4pt>@{-}'+<0pt,-6pt>[d] \ar@<-4pt>@{-}'+<0pt,-6pt>[d]^{f}} & & Y^\wedge \ar@2{-}[d] & \textcolor{white}{XX} = \textcolor{white}{XX} & & 		X \ar@{-}[dr] & \ar@{}[d]|{\varepsilon} & X^\wedge \ar@{-}[ld] & & X \ar@2{-}[d] & & Y^\wedge \ar@2{-}[d] & \textcolor{white}{XX} = \textcolor{white}{XX} & & 		Y \ar@{-}[dr] & \ar@{}[d]|{\varepsilon} & Y^\wedge \ar@{-}[ld]	\\	
								 X \ar@2{-}[d] & & X^\wedge \ar@2{-}[d] & & Y \ar@{-}[rd] & \ar@{}[d]|{\varepsilon} & Y^\wedge \ar@{-}[ld] & & & 		 & & & & X \ar@<4pt>@{-}'+<0pt,-6pt>[d] \ar@<-4pt>@{-}'+<0pt,-6pt>[d]^{f} & & Y^\wedge \ar@2{-}[d] & & & 		& & & & & & 	\\							 
								 X \ar@{-}[dr] & \ar@{}[d]|{\varepsilon} & X^\wedge \ar@{-}[ld] & & & &	& & & 	 & & & & Y \ar@{-}[rd] & \ar@{}[d]|{\varepsilon} & Y^\wedge \ar@{-}[ld] & & & \\
								 & & & & & & & & & 			& & & & & & & & &}
		
	La primer igualdad se obtiene subiendo y bajando flechas, y la segunda por la primer igualdad triangular de la dualidad $X^\lor \dashv X$. El diagrama de la derecha es la otra composición del cuadrado.
\end{proof}

Los isomorfismos de la propiedad \ref{dualdosveces} nos dan unidades y counidades para las adjunciones $F \dashv G$ y $G \dashv F$. Luego tenemos la siguiente propiedad.


\begin{\prop} \label{FadjF}
	Sea $\Cat$ una categoría tensorial. Si todo objeto $X$ de $\Cat$ tiene un dual a izquierda y un dual a derecha, $F$ y $G$ constituyen una equivalencia de categorías entre $\Cat$ y $\Cat^{op}$.
\end {\prop}

En el caso en que $\Cat$ es simétrica, esta adjunción es un caso particular del hecho de que el funtor $Hom(-,C)$ es autoadjunto para todo objeto $C$ de $\Cat$ (ver la siguiente sección).

\subsection{Hom internos} \label{sub:homint}

\begin{\de}
	Sea $\Cat$ una categoría tensorial. Si para algún objeto $C$ de $\Cat$ el funtor $\Cat \stackrel{(-) \otimes C}{\longrightarrow} \Cat$ tiene un adjunto a derecha, llamamos $Hom(C,-)$ al adjunto. Si esto ocurre para todos los objetos de $\Cat$, decimos que la categoría $\Cat$ tiene hom internos.
\end{\de}

Expresando esta adjunción como biyección natural entre flechas obte- \linebreak nemos la ley exponencial:
\begin{center}
\underline { $A \otimes C \rightarrow B$ } \\
$A \rightarrow Hom(C,B)$
\end{center}

que tiene como caso particular a la biyección $$[I, Hom(C,B)] = [C,B]$$

Por otro lado, la counidad de la adjunción $Hom(C,B) \otimes B \stackrel{\varepsilon_B}{\rightarrow} B$ es una flecha que podemos identificar con la evaluación.

A partir de la evaluación y de la ley exponencial podemos construir la flecha que podemos identificar con la "`composición"', de la siguiente forma: buscamos una flecha $Hom(Y,Z) \otimes Hom (X,Y) \stackrel{\circ}{\rightarrow} Hom (X,Z)$, y la conseguimos por la ley exponencial a partir de la flecha $$Hom(Y,Z) \otimes Hom (X,Y) \otimes X \stackrel{Hom(Y,Z) \otimes \varepsilon_Y}{\longrightarrow} Hom(Y,Z) \otimes Y \stackrel{\varepsilon_Z}{\rightarrow} Z$$

 Esta flecha se corresponde con la definición usual de la composición con elementos dada por $(f \circ g)(x)=f(g(x))$ pues $f(g(x)) = ev(f,ev(g,x))$.

Que una categoría tensorial $\Cat$ con objeto neutro $I$ tenga hom internos nos da en virtud de la propiedad \ref{CadjC*} un candidato único para ser el dual a izquierda de $C$, el objeto $Hom(C,I)$. Sin embargo una categoría puede tener hom internos pero no una dualidad, por ejemplo la categoría $Vec_K$ tiene hom internos y es simétrica, luego si tuviera una dualidad se debería verificar que $Hom(Hom(V,K),K) = V$, pero este no es el caso si $V$ no tiene dimensión finita.

Por otro lado, por unicidad del funtor adjunto, la proposición \ref{CadjC*} también nos dice lo siguiente

\begin{\prop}
Sea $\Cat$ una categoría tensorial. Si un objeto $C$ tiene un dual a izquierda $C^\lor $, entonces el funtor $Hom(C,-)$ existe y su valor es \linebreak $Hom(C,X) = X \otimes C^\lor$. En particular, $Hom(C,I)=C^\lor $. Globalmente, se obtiene que si $\Cat$ tiene una dualidad a izquierda entonces tiene hom internos.
\end{\prop}

\begin{remark}
La igualdad $Hom(C,X) = X \otimes C^\lor$ es idéntica a lo que vimos en particular para $\Cat = Vec_K^{<\infty}$ en \eqref{isovec}. 
\end{remark}

 En \cite{JS} se usa la definición de objeto dual, mientras que en \cite{DM} se trabaja con una dualidad en categorías con hom internos donde se piden ciertas condiciones extras (ver \cite{DM}, p.112). Tiene sentido entonces preguntarse qué relaciones hay entre ambas definiciones, para lo cual describiremos más explicitamente la definición de Deligne-Milne.
 
 En una categoría tensorial simétrica con hom internos $\Cat$, podemos tomar para cada objeto $C$ su potencial dual $C^*=Hom(C,I)$. Nos preguntamos entonces qué condiciones tiene que satisfacer $\Cat$ para que $C^*$ sea un objeto dual de $C$ en el sentido de la definición \ref{dual}. Deligne-Milne realizan las siguientes definiciones.
 
 Para cada objeto $C$ tenemos una inclusión de $C$ en ${C^*}^*$ que corresponde a $C \otimes C^* \stackrel{\psi}{\rightarrow} C^* \otimes C \stackrel{\varepsilon_I}{\rightarrow} I$ vía la ley exponencial. Si esta inclusión resulta un isomorfismo, decimos que $C$ es reflexivo.
 
 También tenemos morfismos $$Hom(C_1,D_1) \otimes Hom(C_2,D_2) \stackrel{\varphi_{C_1D_1C_2D_2}}{\longrightarrow} Hom (C_1 \otimes C_2, D_1 \otimes D_2)$$ que corresponden a 
 \begin{equation} \label{fCDCD}
   f_{C_1D_1C_2D_2}: Hom(C_1,D_1) \otimes Hom(C_2,D_2) \otimes C_1 \otimes C_2 \stackrel{Hom(C_1,D_1) \otimes \psi \otimes C_2}{\longrightarrow} ... 
 \end{equation}	
 $$... Hom(C_1,D_1) \otimes C_1 \otimes Hom(C_2,D_2) \otimes C_2 \stackrel{\varepsilon_{D_1} \otimes \varepsilon_{D_2}}{\longrightarrow} D_1 \otimes D_2$$
 (notar que cada $\varepsilon$ corresponde a una adjunción de Hom internos distinta) vía la ley exponencial (que también corresponde a una adjunción distinta, a la de $(-) \otimes C_1 \otimes C_2$). 
 
\begin{\de}[Categoría rígida según Deligne-Milne]
 Una categoría tensorial simétrica con hom internos se dice rígida si todos los morfismos $\varphi_{C_1D_1C_2D_2}$ son isomorfismos, y además todos sus objetos son reflexivos.
\end{\de}
 
Lo primero que haremos es ver que si una categoría $\Cat$ simétrica tiene una dualidad (a izquierda), entonces $\Cat$ es rígida.
 
 Ya hemos visto que si una categoría tiene una dualidad a izquierda, entonces la categoría tiene hom internos, y que el valor de este funtor es $Hom(B,C) = C \otimes B^\lor$. Notemos además que en este caso la ley exponencial queda 
 
\begin{equation} \label{leyexpcondual}
\xymatrix { {}  \ar@<-4ex>@{-}[rrr] & A \otimes B \ar[r]^{f} & C & {} \ar@/^2pc/[d]^{\theta}\\
						{} \ar@/^2pc/[u]^{\theta} & A \ar[r]^>>>>{g} & Hom(B,C)=C\otimes B^* & {} }
\end{equation}
$\theta(f)= A \stackrel{A \otimes \eta}{\rightarrow} A \otimes B \otimes B^* \stackrel{f \otimes B^*}{\rightarrow} C \otimes B^*$, $\theta(g)= A \otimes B \stackrel{g \otimes B}{\rightarrow} C \otimes B^* \otimes B \stackrel{C \otimes \varepsilon}{\rightarrow} C$

Notemos entonces que las flechas que queremos ver que son isomorfismos vienen dadas por el diagrama
 
\ \ \ \ \ \ \ \ \ \ \ \ \ \ \ \ \ \ \ \ \ \ \ \xymatrix@C=-0.3pc{ 	 D_1 \ar@2{-}[d] & & C_1^\lor \ar@2{-}[d] & & D_2 \ar@2{-}[d] & & C_2^\lor \ar@2{-}[d] & & & & & \ar@{-}[llld] \ar@{}[d]|{\eta} \ar@{-}[rrrd] \\
 										 D_1 \ar@2{-}[d] & & C_1^\lor \ar@2{-}[d] & & D_2 \ar@2{-}[d] & & C_2^\lor \ar@2{-}[d] & & C_1 \ar@2{-}[d] & & & \ar@{-}[ld] \ar@{}[d]|{\eta} \ar@{-}[rd] & & & C_1^\lor \ar@2{-}[d] \\
 										 D_1 \ar@2{-}[d] & & C_1^\lor \ar@2{-}[d] & & D_2 \ar@2{-}[drr] & & C_2^\lor \ar@2{-}[drr] & & C_1 \ar@2{-}[lllld]|{\psi} & & C_2 \ar@2{-}[d] & & C_2^\lor \ar@2{-}[d] & & C_1^\lor \ar@2{-}[d] \\
 										 D_1 \ar@2{-}[d] & & C_1^\lor \ar@{-}[dr] & \ar@{}[d]|{\varepsilon} & C_1 \ar@{-}[dl] & & D_2 \ar@2{-}[d] & & C_2^\lor \ar@{-}[dr] & \ar@{}[d]|{\varepsilon} & C_2 \ar@{-}[dl] & & C_2^\lor \ar@2{-}[d] & & C_1^\lor \ar@2{-}[d] \\
 										 D_1 & & & & & & D_2 & & & & & & C_2^\lor & & C_1^\lor }

Primero juntamos a $\eta$ y $\varepsilon$ de la dualidad $C_2^\lor \dashv C_2$, y usamos la segunda igualdad triangular de esa dualidad para obtener el diagrama de la izquierda:

 \ \ \ \ \ \ \ \ \ \ \ \ \ \ \ \xymatrix@C=-0.3pc{ D_1 \ar@2{-}[d] & & C_1^\lor \ar@2{-}[d] & & D_2 \ar@2{-}[d] & & C_2^\lor \ar@2{-}[d] & & & \ar@{-}[ld] \ar@{}[d]|{\eta} \ar@{-}[rd] & & & & & D_1 \ar@2{-}[d] & & C_1^\lor \ar@2{-}[d] & & D_2 \ar@2{-}[d] & & C_2^\lor \ar@2{-}[d] & & & \ar@{-}[ld] \ar@{}[d]|{\eta} \ar@{-}[rd] \\
										D_1 \ar@2{-}[d] & & C_1^\lor \ar@2{-}[d] & & D_2 \ar@2{-}[drr] & & C_2^\lor \ar@2{-}[drr] & & C_1 \ar@2{-}[lllld]|{\psi} & & C_1^\lor \ar@2{-}[d] & & & & D_1 \ar@2{-}[d] & & C_1^\lor \ar@2{-}[d] & & D_2 \ar@2{-}[d] & & C_2^\lor \ar@2{-}[drr]|{\psi} & & C_1 \ar@2{-}[lld]|{\psi} & & C_1^\lor \ar@2{-}[d] \\
										D_1 \ar@2{-}[d] & & C_1^\lor \ar@{-}[dr] & \ar@{}[d]|{\varepsilon} & C_1 \ar@{-}[dl] & & D_2 \ar@2{-}[d] & & C_2^\lor \ar@2{-}[d] & & C_1^\lor \ar@2{-}[d] & & \ \ \ \ \hbox{=} \ \ \ \ & & D_1 \ar@2{-}[d] & & C_1^\lor \ar@2{-}[d] & & D_2 \ar@2{-}[drr]|{\psi} & & C_1 \ar@2{-}[dll]|{\psi} & & C_2^\lor \ar@2{-}[d] & & C_1^\lor \ar@2{-}[d] \\
										D_1 & & & & & & D_2 & & C_2^\lor & & C_1^\lor & & & & D_1 \ar@2{-}[d] & & C_1^\lor \ar@{-}[dr] & \ar@{}[d]|{\varepsilon} & C_1 \ar@{-}[dl] & & D_2 \ar@2{-}[d] & & C_2^\lor \ar@2{-}[d] & & C_1^\lor \ar@2{-}[d] \\
										& & & & & & & & & & & & & & D_1 & & & & & & D_2 & & C_2^\lor & & C_1^\lor }

La igualdad es válida por \eqref{hex}. Ahora aplicamos el lema \ref{etapsi} dos veces (a $\eta$, pero podríamos aplicarlo a $\varepsilon$) para obtener

\ \ \ \ \ \ \ \ \ \ \ \ \ \ \ \ \ \ \ \ \ \ 
\xymatrix@C=-0.3pc{ 
							D_1 \ar@2{-}[d] & & C_1^\lor \ar@2{-}[d] & & & \ar@{-}[ld] \ar@{}[d]|{\eta} \ar@{-}[rd] & & & D_2 \ar@2{-}[d] & & C_2^\lor \ar@2{-}[d] & & & 			D_1 \ar@2{-}[d] & & C_1^\lor \ar@2{-}[drr]|{\psi} & & D_2 \ar@2{-}[dll]|{\psi} & & C_2^\lor \ar@2{-}[d]		\\
							D_1 \ar@2{-}[d] & & C_1^\lor \ar@2{-}[d] & & C_1 \ar@2{-}[d] & & C_1^\lor \ar@2{-}[drr]|{\psi} & & D_2 \ar@2{-}[dll]|{\psi} & & C_2^\lor \ar@2{-}[d] &  \ \ \ \ \hbox{=} \ \ \ \ & & 			D_1 \ar@2{-}[d] & & D_2 \ar@2{-}[d] & & C_1^\lor \ar@2{-}[drr]|{\psi} & & C_2^\lor \ar@2{-}[dll]|{\psi} \\
							D_1 \ar@2{-}[d] & & C_1^\lor \ar@2{-}[d] & & C_1 \ar@2{-}[d] & & D_2 \ar@2{-}[d] & & C_1^\lor \ar@2{-}[drr]|{\psi} & & C_2^\lor \ar@2{-}[dll]|{\psi} & & & 			D_1 & & D_2 & & C_2^\lor & & C_1^\lor		\\
							D_1 \ar@2{-}[d] & & C_1^\lor \ar@{-}[dr] & \ar@{}[d]|{\varepsilon} & C_1 \ar@{-}[dl] & & D_2 \ar@2{-}[d] & & C_2^\lor \ar@2{-}[d] & & C_1^\lor \ar@2{-}[d] & & & \\
							D_1 & & & & & & D_2 & & C_2^\lor & & C_1^\lor & & & }

La igualdad se obtiene por la segunda igualdad triangular de la dualidad $C_1^\lor \dashv C_1$, juntando antes a $\eta$ y $\varepsilon$. El diagrama de la derecha es un isomorfismo pues las $\psi$ lo son. Veamos ahora que cualquier objeto $C$ es reflexivo. Ya hemos visto en la propiedad \ref{DualEnSimetrica} que como $\Cat$ es simétrica, ${C^\lor}^\lor = C$ y que las $\eta$ y $\varepsilon$ de esta dualidad son las mismas de la dualidad de $C$ pero compuestas con $\psi$. Luego ver que $C$ es reflexivo es ver que el diagrama

\ \ \ \ \ \ \ \ \ \ \ \ \ \ \ \ \ \ \ \ \ \ \xymatrix@C=-0.3pc{ C \ar@2{-}[d] & & & \ar@{-}[ld] \ar@{-}[rd] \ar@{}[d]|{\eta} & \\
								 C \ar@2{-}[d] & & C \ar@2{-}[rrd]|{\psi} & & C^\lor \ar@2{-}[lld]|{\psi} \\	
								 C \ar@2{-}[rrd]|{\psi} & & C^\lor \ar@2{-}[lld]|{\psi} & & C \ar@2{-}[d] \\							 
								 C^\lor \ar@{-}[rd] & \ar@{}[d]|{\varepsilon} & C \ar@{-}[ld] & & C \ar@2{-}[d] \\							 
								 & & & & C }

es un isomorfismo, pero este diagrama corresponde a la otra igualdad triangular de la dualidad $C \dashv C^\lor$, según fue vista en la propiedad \ref{DualEnSimetrica}, y por lo tanto se colapsa a la identidad de $C$.
 
 Para ver que en una categoría rígida los objetos $C^* = Hom(C,I)$ resultan un dual de $C$ en el sentido de la definición \ref{dual}, planteemos primero la siguiente pregunta: si dado un objeto $C$ tenemos un objeto $D$ que hace que valga la adjunción $(-) \otimes C \dashv (-) \otimes D$, ¿qué condición podemos pedirle a esta adjunción para que $D=C^\lor$?
 
 Notemos que las igualdades triangulares en este caso nos dan
 
 $$ C \stackrel{\eta_I \otimes C}{\longrightarrow} C \otimes D \otimes C \stackrel{\varepsilon_C}{\longrightarrow} C = id_C$$
 y
 $$ D \stackrel{\eta_D}{\longrightarrow} D \otimes C \otimes D \stackrel{\varepsilon_I \otimes Y}{\longrightarrow} D = id_D$$
 
 Luego, para que $\eta = \eta_I$ y $\varepsilon = \varepsilon_I$ nos den una dualidad $D \dashv C$, bastará con pedir que $\varepsilon_C = C \otimes \varepsilon$ y que $\eta_D = D \otimes \eta$.
 
 En el caso de una categoría $\Cat$ rígida, notemos primero que $Hom(I,C)$ es un objeto isomorfo a $C$ para todo $C$, pues $\Cat \stackrel{(-) \otimes I}{\longrightarrow} \Cat$ es una equivalencia de categorías con pseudoinversa la identidad y los adjuntos a derecha son únicos salvo isomorfismo. Luego podemos asumir para simplificar la escritura $Hom(I,C) = C$, y $Hom(I,C) \otimes I \stackrel{\varepsilon_C}{\rightarrow} C$ el isomorfismo de la estructura de categoría tensorial $r_C: C \otimes I \rightarrow C$.
 
 Luego, consideramos el diagrama conmutativo
 
\begin{equation} \label{psiisom}
 \xymatrix@C=-2pc { & Hom(I,D) \otimes I \otimes Hom(C,I) \otimes C \ar[rd]^{\varepsilon_D \otimes \varepsilon_I} \\
 						Hom(I,D) \otimes Hom(C,I) \otimes I \otimes C \ar[ru]^{Hom(I,D) \otimes \psi \otimes C} & & D \otimes I \\
 							D \otimes Hom(C,I) \otimes C \ar@{-}[u]_{\cong} \ar[rr]^{D \otimes \varepsilon_I} & & D \ar@{-}[u]_{\cong} }
\end{equation} 
 
 del que observamos que la composición superior coincide con la definición de $f_{IDCI}$. Notemos además que la $\psi$ que aparece, como está haciendo conmutar a una $I$, puede ser reemplazada por la composición de $r$ y $l^{-1}$, los isomorfismos de la estructura de categoría tensorial que hacen aparecer y desaparecer a la $I$. Por lo tanto no necesitamos para esto que la categoría sea simétrica. Luego, si la categoría es rígida, obtenemos que si $\varphi$ se obtiene de la ley exponencial de la siguiente forma
 
 \begin{center}
\underline { $ D \otimes C^* \otimes C \stackrel{D \otimes \varepsilon_I}{\rightarrow} D$ } \\
$D \otimes C^* \stackrel{\varphi}{\rightarrow} Hom(C,D)$
\end{center}

 entonces es un isomorfismo. Luego obtenemos que \linebreak $Hom(C,-) \cong (-) \otimes C^*$, entonces estamos en la situación recién planteada (en la cual la adjunta de $(-) \otimes C$ viene dada por $(-) \otimes D$, con $D = C^*$) y para ver que $C^*$ es un dual de $C$ bastará con ver que si notamos $\varepsilon = \varepsilon_I$ y $\eta = \eta_I$, entonces para esta adjunción $\varepsilon_C = C \otimes \varepsilon$ y $\eta_{C^*} = C^* \otimes \eta$. Para eso usaremos la naturalidad de la biyección entre las flechas que da la adjunción $(-) \otimes C \dashv Hom(C,-)$:
 
  \ \ \ \ \xymatrix { {}  \ar@<-4ex>@{-}[rrrr] & Hom(C,C) \otimes C \ar[r]^<<<<<<{\varphi^{-1} \otimes C} \ar@/^2pc/[rr]^{\varepsilon_C} & C \otimes C^* \otimes C \ar[r]^{C \otimes \varepsilon} & C & {} \\
						{}  & Hom(C,C) \ar[r]^{\varphi^{-1}} \ar@/_2pc/[rr]_{id} & C \otimes C^* \ar[r]^{\varphi} & Hom(C,C) & {} }

 El diagrama anterior debe leerse de la siguiente forma: $C \otimes \varepsilon$ nos da el isomorfismo $\varphi$ vía la ley exponencial. Además, si componemos abajo con la inversa de $\varphi$, se corresponderá con componer arriba con $\varphi^{-1} \otimes C$. Pero entonces abajo queda la identidad y por lo tanto arriba la counidad $\varepsilon_C$. La conmutatividad del triángulo de arriba es una de las igualdades deseadas.
 
 Análogamente se obtiene la otra igualdad considerando el diagrama 
 
  \xymatrix@C=3.5pc { {\! \! \! \! \! \! \! \! \! \!}  \ar@<-4ex>@{-}[rrr] & C^* \otimes C \ar[r]^>>>>>{C^* \otimes \eta \otimes C} \ar@/^2pc/[rr]^{id} & C^* \otimes Hom(C,C) \otimes C \ar[r]^<<<<<<<<<<{C^* \otimes \varepsilon_C} & C^* \otimes C & {\! \! \! \! \! \! \! \! \! \!} \\
						{}  & C^* \ar[r]^<<<<<<<<<<<<<<{C^* \otimes \eta} \ar@/_2pc/[rr]_{\eta_{C^*}} & C^* \otimes Hom(C,C) \ar[r]^{\varphi} & Hom(C,C^* \otimes C) & {} }
 
 siempre que $\varphi$ resulte un isomorfismo, lo cual se tiene para una categoría rígida considerando el siguiente diagrama similar a \eqref{psiisom}:
 
 \xymatrix@C=-2pc { & Hom(I,C^*) \otimes I \otimes Hom(C,C) \otimes C \ar[rd]^{\varepsilon_C^* \otimes \varepsilon_C} \\
 						Hom(I,C^*) \otimes Hom(C,C) \otimes I \otimes C \ar[ru]^{Hom(I,C^*) \otimes \psi \otimes C} & & C^* \otimes C \\
 							C^* \otimes Hom(C,C) \otimes C \ar@{-}[u]_{\cong} \ar[rr]^{C^* \otimes \varepsilon_C} & & C^* \ar@{-}[u]_{\cong} } 

	 del que observamos que la composición superior coincide con la definición de $f_{IC^*CC}$. La misma observación sobre la innecesariedad de la $\psi$ es válida aquí.

 Observando qué propiedades de la rigidez de una categoría hemos utilizado y cuáles no en esta demostración, notamos que hemos demostrado la siguiente propiedad:
 
 \begin{\prop}
 		Sea $\Cat$ una categoría tensorial (no necesariamente simétrica). Son equivalentes:
\begin{enumerate}
	\item $\Cat$ tiene una dualidad a izquierda.
	\item $\Cat$ tiene hom internos y los morfismos $D \otimes C^* \stackrel{\varphi}{\rightarrow} Hom(C,D)$ y \linebreak $C^* \otimes Hom(C,C) \stackrel{\varphi}{\rightarrow} Hom(C,C^* \otimes C)$, obtenidos respectivamente a partir de $D \otimes C^* \otimes C \stackrel{D \otimes \varepsilon_I}{\longrightarrow} D$ y de $C^* \otimes Hom(C,C) \otimes C \stackrel{C \otimes \varepsilon_C}{\longrightarrow} C^* \otimes C$ por la ley exponencial, son isomorfismos.
\end{enumerate}
	Además, si $\Cat$ es simétrica, se agrega a estas equivalencias
\begin{enumerate}	
	\item[3.] $\Cat$ es rígida.
\end{enumerate}
 \end{\prop} 
 
 De esta propiedad puede observarse que nos dice por un lado que la generalización a objetos duales de Joyal de la noción de rigidez de Deligne-Milne es la correcta para categorías tensoriales no simétricas, y también que el enunciado del item 2 es otra definición posible de rigidez, más general pues no requiere la simetría y tiene hipótesis más débiles, que es equivalente a la definición de Deligne-Milne en el caso simétrico. Las condiciones del item 2 pueden tomarse como una definición de rigidez a izquierda.
 
 También puede estudiarse en categorías tensoriales no simétricas el adjunto a derecha del "`otro"' funtor dado por el producto tensorial: $\Cat \stackrel{C \otimes (-)}{\longrightarrow} \Cat$, y relacionarlo con los duales a derecha. Para estas cuestiones el lector interesado puede consultar \cite{sch}, 2.1, p.329.
 
\begin{remark}
	En general, en una categoría tensorial que no sea simétrica, no tiene por qué ser cierto que $Hom(X^\lor,Y) = Y \otimes X$, pues no es cierto que ${X^\lor}^\lor = X$. Sin embargo, el dual a derecha sí tiene esta propiedad, pues como si $X^\wedge$ existe vale que ${X^\wedge}^\lor = X$, se deduce que $Hom(X^\wedge,Y) = Y \otimes X$. Este es el motivo por el cual usaremos el dual a derecha para definir $Nat^\lor(F,G)$ en la sección $\ref{sec:NatPredual}$, pues si lo definiéramos reemplazando el dual a derecha por dual a izquierda no tendríamos la estructura de coálgebra cuando los funtores son sobre categorías tensoriales no simétricas.
\end{remark}

\pagebreak
\section{Definiciones Algebraicas} \label{sec:DefAlg}

\subsection{Álgebras y coálgebras}

A partir de esta sección, $\Vat$ será una categoría tensorial con neutro $I$. En toda esta sección puede reemplazarse $\Vat$ por $R$-Mod e $I$ por un anillo conmutativo $R$, o bien por $Vec_K$ y $K$ respectivamente para obtener las definiciones más usuales.

\begin{\de}
	Un objeto $A$ de $\Vat$ es un álgebra si se tienen dos flechas de $\Vat$, una multiplicación $m: A \otimes A \rightarrow A$ asociativa y una unidad $u: I \rightarrow A$.
	
	Asoc:
	\xymatrix{ A \otimes A \otimes A \ar[r]^{m \otimes id} \ar[d]_{id \otimes m} & A \otimes A \ar[d]^{m} \\
							A \otimes A \ar[r]^{m} & A}
	\ \ Unid:	
	\xymatrix {& A \otimes A \ar[dd]^{m} & \\
						 I \otimes A \ar[ur]^{u \otimes id} \ar[dr]_{\cong} & & A \otimes I \ar[ul]_{id \otimes u} \ar[dl]^{\cong} \\
						 & A }
\end{\de}

\begin{remark}
	Las flechas $m$ y $u$ son parte de la estructura de álgebra de $A$, pero se omiten para aliviar la escritura. Siempre usaremos esas letras para referirnos a estas flechas. Observaciones análogas deben considerarse hechas para todas las definiciones de esta sección.
\end{remark}

Recordemos que, en una categoría tensorial $\Vat$, tener un monoide $M$ respecto de $\otimes$ es tener un objeto $M$ de $\Vat$ con una flecha multiplicación $\mu: M \otimes M \rightarrow M$ asociativa ($M \otimes M \otimes M \stackrel{\mu \circ (\mu \otimes M) = \mu \circ (M \otimes \mu)}{\longrightarrow} M$) y una flecha $e : I \rightarrow M$ unidad ($M \stackrel{\mu \circ (M \otimes e) = id_M = \mu \circ (e \otimes M)}{\longrightarrow} M$).

Un álgebra no es otra cosa que un monoide respecto de $\otimes$.

\begin{\de}
	Una flecha en $\Vat$ entre dos álgebras $f: A \rightarrow A'$ es un morfismo de álgebras si respeta la multiplicación y la unidad, es decir si los siguientes diagramas conmutan:
	
Resp. mult.
	\xymatrix{ A \otimes A \ar[r]^{m} \ar[d]_{f \otimes f} & A \ar[d]^{f} \\
						 A' \otimes A' \ar[r]^{m} & A'} 
\ \ \ \ Resp. unid.						 
	\xymatrix { & A \ar[dd]^{f} \\
					I \ar[ur]^{u} \ar[dr]_{u} \\
							& A' }
\end{\de}

Las álgebras con sus morfismos forman una categoría que notaremos $Alg_\Vat$. Se tiene un funtor de olvido $U: Alg_\Vat \rightarrow \Vat$.

Podemos dualizar estas definiciones para definir la estructura de coálgebra y sus morfismos.

\begin{\de}
	Un objeto $C$ de $\Vat$ es una coálgebra si se tienen dos flechas de $\Vat$, una comultiplicación $\triangle : C \rightarrow C \otimes C$ coasociativa y una counidad $\varepsilon: C \rightarrow I$.
	Coasoc:	
	\xymatrix{ C \otimes C \otimes C & C \otimes C \ar[l]_>>>>>>{\triangle  \otimes id} \\
							C \otimes C \ar[u]^{id \otimes \triangle } & C \ar[l]_{\triangle } \ar[u]_{\triangle } }
	\ \ Counid:
	\xymatrix {& C \otimes C \ar[dl]_{\varepsilon \otimes id} \ar[dr]^{id \otimes \varepsilon} & \\
						 I \otimes C & & C \otimes K \\
						 & C \ar[uu]^{\triangle } \ar[ul]^{\cong} \ar[ur]_{\cong} }
	
\end{\de}

Una coálgebra es un comonoide respecto de $\otimes$. 

\begin{\de}
	Una flecha en $\Vat$ entre dos coálgebras $f: C \rightarrow C'$ es un morfismo de coálgebras si respeta la comultiplicación y la counidad, es decir si los siguientes diagramas conmutan:
	
	Resp. comult.
		\xymatrix{ C \otimes C \ar[d]_{f \otimes f} & C \ar[l]_{\triangle} \ar[d]^{f} \\
						 C' \otimes C' & C' \ar[l]_{\triangle} } 
	\ \ \ \ Resp. counid.						 
		\xymatrix { & C \ar[dd]^{f} \ar[dl]_{\varepsilon} \\
							I \\
								& C' \ar[ul]^{\varepsilon} } 
		
\end{\de}

Las coálgebras con sus morfismos forman categoría que notaremos \linebreak $Coalg_\Vat$. Se tiene también un funtor de olvido $U: Coalg_\Vat \rightarrow \Vat$.

Si $\Vat$ es simétrica, su producto tensorial se restringe correctamente a sus álgebras y sus coálgebras, y a los morfismos. Más explícitamente, si $A$ y $B$ son álgebras en $\Vat$ con multiplicaciones $m$ y $m'$, tenemos la multiplicación 
$$ A \otimes B \otimes A \otimes B \stackrel{A \otimes \psi \otimes B}{\longrightarrow} A \otimes A \otimes B \otimes B \stackrel{m \otimes m'}{\longrightarrow} A \otimes B$$
que le da a $A \otimes B$ una estructura de álgebra. Se dejan por verificar las unidades y el hecho de que el producto tensorial de morfismos de álgebras resulta un morfismo de álgebras. De forma análoga se define el producto tensorial de coálgebras y de sus morfismos. Se tiene entonces la siguiente propiedad

\begin{\prop} \label{AlgCoalgTens}
Si $\Vat$ es una categoría tensorial simétrica con producto tensorial $\otimes$, entonces $Alg_\Vat$ y $Coalg_\Vat$ obtienen con $\otimes$ estructuras de categoría tensorial (simétrica). Además, el álgebra $I$ es el objeto inicial de $Alg_\Vat$.
\end{\prop}

\begin{\ej}
	Los anillos (no necesariamente conmutativos) son las álgebras en la categoría de grupos abelianos. Allí se tiene $\Z[X] \otimes \Z[Y] = \Z[X,Y]$ (anillos de polinomios), pero el coproducto $\Z[X] \coprod{} \Z[Y]$ es el anillo de polinomios en dos variables que no conmutan entre sí. Luego, en general, en $Alg_\Vat$ producto tensorial y coproducto (aunque existan) no coinciden.
\end{\ej}

Dentro de la categoría $Alg_\Vat$ tenemos la subcategoría plena $Alg_\Vat^{Sim}$ de las álgebras conmutativas, es decir aquellas para las que vale \linebreak $m = m \circ \psi$. Allí también se restringe el producto tensorial de $\Vat$, pero en este caso sí resulta un coproducto:

\begin{\prop} \label{AlgSimTens}
En la categoría tensorial $Alg_\Vat^{Sim}$ el producto tensorial es un coproducto con inclusiones $$A \stackrel{A \otimes u}{\longrightarrow} A \otimes B \stackrel{u \otimes B}{\longleftarrow} B$$
\end{\prop}

\begin{proof}
	Debemos mostrar que dados morfismos de álgebras $A \stackrel{f}{\rightarrow} C$ y $B \stackrel{g}{\rightarrow} C$, existe un único morfismo $A \otimes B \stackrel{\varphi}{\rightarrow} C$ que hace conmutar al diagrama
	
\begin{equation} \label{varphicumple}
	\xymatrix{ B \ar[rd]^{g} \ar[d]_{u \otimes B} \\
						 A \otimes B \ar[r]^{\varphi} & C \\
						 A \ar[u]^{a \otimes u} \ar[ru]_{f} }
\end{equation}

Consideramos el diagrama

$$ \xymatrix@R=0.5pc { & A \otimes B \ar@/^8pc/[dddd]_{f \otimes g} \ar[dd]^{A \otimes u \otimes u \otimes B} \\ \\
					 				 & A \otimes B \otimes A \otimes B \ar[dd]^{\varphi \otimes \varphi} \ar[dl]^{A \otimes \psi \otimes B} \\
					 				 A \otimes A \otimes B \otimes B \ar[dd]_{m \otimes m} \\
					 				 & C \otimes C \ar[dd]^{m} \\
					 				 A \otimes B \ar[dr]^{\varphi} \\
					 				 & C } $$

del que la parte derecha conmuta por \eqref{varphicumple} y la parte izquierda pues $\varphi$ es morfismo de álgebras. La composición de la izquierda se corresponde, en el cálculo de ascensores, a

$$ \xymatrix@C=0pc{ A \ar@2{-}[d] & & \ar@{-}[dl] \ar@{}[d]|{u} \ar@{-}[dr] & & \ar@{-}[dl] \ar@{}[d]|{u} \ar@{-}[dr] & & B \ar@2{-}[d] \\
								 A \ar@2{-}[d] & & B \ar@2{-}[drr]|{\psi} & & A \ar@2{-}[dll]|{\psi} & & B \ar@2{-}[d] 				& &     A \ar@2{-}[d] & & \ar@{-}[dl] \ar@{}[d]|{u} \ar@{-}[dr] & & \ar@{-}[dl] \ar@{}[d]|{u} \ar@{-}[dr] & & B \ar@2{-}[d] \\
								 A \ar@{-}[dr] & \ar@{}[d]|{m} & A \ar@{-}[dl] & & B \ar@{-}[dr] & \ar@{}[d]|{m} & B \ar@{-}[dl] 				& \ \ \ \ \ar@{}[d]|{=} \ \ \ \ & 					A \ar@{-}[dr] & \ar@{}[d]|{m} & A \ar@{-}[dl] & & B \ar@{-}[dr] & \ar@{}[d]|{m} & B \ar@{-}[dl]   & \ \ \ \  \ar@{}[d]|{=} \ \ \ \ &			A \ar@{-}[dr] & \ar@{}[d]|{\varphi} & B \ar@{-}[dl] \\
								 & A \ar@{-}[drr] & & \ar@{}[d]|{\varphi} & & B \ar@{-}[dll] & 			& &       & A \ar@{-}[drr] & & \ar@{}[d]|{\varphi} & & B \ar@{-}[dll] &			& & 	& C \\
								 & & & C & & & 				& & 			& & & C } $$

donde la primer igualdad es válida por \eqref{swap} y la segunda pues $u$ es unidad para $m$. Luego la única definición posible es $\varphi = m \circ (f \otimes g)$, y la igualdad

$$ \xymatrix@C=0pc{ A \ar@2{-}[d] & & \ar@{-}[dl] \ar@{}[d]|{u} \ar@{-}[dr] \\
								 A {\ar@<4pt>@{-}'+<0pt,-6pt>[d] \ar@<-4pt>@{-}'+<0pt,-6pt>[d]^{f}} & & B {\ar@<4pt>@{-}'+<0pt,-6pt>[d] \ar@<-4pt>@{-}'+<0pt,-6pt>[d]^{g}} & & \ \ \ \ \ar@{}[d]|{=} \ \ \ \ & & A {\ar@<4pt>@{-}'+<0pt,-6pt>[d] \ar@<-4pt>@{-}'+<0pt,-6pt>[d]^{f}} & & \ar@{-}[dl] \ar@{}[d]|{u} \ar@{-}[dr] & & \ \ \ \ \ar@{}[d]|{=} \ \ \ \ & &	A {\ar@<4pt>@{-}'+<0pt,-6pt>[d] \ar@<-4pt>@{-}'+<0pt,-6pt>[d]^{f}} \\
								 C \ar@{-}[dr] & \ar@{}[d]|{m} & C \ar@{-}[dl] & & & & C \ar@{-}[dr] & \ar@{}[d]|{m} & C \ar@{-}[dl] & & & & C \\
								 & C & & & & & & C } $$

donde la primer igualdad vale pues $g$ respeta la unidad y la segunda pues $u$ es unidad para $m$, nos da la conmutatividad del triángulo inferior en \eqref{varphicumple}. La conmutatividad del otro triángulo es análoga. Resta verificar que $m \circ (f \otimes g)$ es efectivamente un morfismo de álgebras, este hecho es el que es falso para álgebras no conmutativas. Ver que $m \circ (f \otimes g)$ respeta la multiplicación es ver la igualdad

$$ \xymatrix@C=0pc{ A {\ar@<4pt>@{-}'+<0pt,-6pt>[d] \ar@<-4pt>@{-}'+<0pt,-6pt>[d]^{f}} & & B {\ar@<4pt>@{-}'+<0pt,-6pt>[d] \ar@<-4pt>@{-}'+<0pt,-6pt>[d]^{g}} & & A {\ar@<4pt>@{-}'+<0pt,-6pt>[d] \ar@<-4pt>@{-}'+<0pt,-6pt>[d]^{f}} & & B {\ar@<4pt>@{-}'+<0pt,-6pt>[d] \ar@<-4pt>@{-}'+<0pt,-6pt>[d]^{g}} 			& & 			A \ar@2{-}[d] & & B \ar@2{-}[drr]|{\psi} & & A \ar@2{-}[dll]|{\psi} & & B \ar@2{-}[d] \\
								C \ar@{-}[dr] & \ar@{}[d]|{m} & C \ar@{-}[dl] & & C \ar@{-}[dr] & \ar@{}[d]|{m} & C \ar@{-}[dl] 			& & 			A \ar@{-}[dr] & \ar@{}[d]|{m} & A \ar@{-}[dl] & & B \ar@{-}[dr] & \ar@{}[d]|{m} & B \ar@{-}[dl] \\
								& C \ar@{-}[drr] & & \ar@{}[d]|{m} & & C \ar@{-}[dll] &		& \ \ \ = \ \ \ & 				& A {\ar@<4pt>@{-}'+<0pt,-6pt>[d] \ar@<-4pt>@{-}'+<0pt,-6pt>[d]^{f}} & & & & B {\ar@<4pt>@{-}'+<0pt,-6pt>[d] \ar@<-4pt>@{-}'+<0pt,-6pt>[d]^{g}}  \\
								& & &  C & & & 				& & 			& C \ar@{-}[drr] & & \ar@{}[d]|{m} & & C \ar@{-}[dll] \\
								& & & & & & 					& & 			& & & C} $$
								
Trabajando con la composición de la derecha, usando primero que $f$ y $g$ respetan la multiplicación y luego la igualdad \eqref{psinat} el diagrama de la derecha nos queda

$$ \xymatrix@C=0pc{ A {\ar@<4pt>@{-}'+<0pt,-6pt>[d] \ar@<-4pt>@{-}'+<0pt,-6pt>[d]^{f}} & & B {\ar@<4pt>@{-}'+<0pt,-6pt>[d] \ar@<-4pt>@{-}'+<0pt,-6pt>[d]^{g}} & & A {\ar@<4pt>@{-}'+<0pt,-6pt>[d] \ar@<-4pt>@{-}'+<0pt,-6pt>[d]^{f}} & & B {\ar@<4pt>@{-}'+<0pt,-6pt>[d] \ar@<-4pt>@{-}'+<0pt,-6pt>[d]^{g}} 		& & 		A {\ar@<4pt>@{-}'+<0pt,-6pt>[d] \ar@<-4pt>@{-}'+<0pt,-6pt>[d]^{f}} & & B {\ar@<4pt>@{-}'+<0pt,-6pt>[d] \ar@<-4pt>@{-}'+<0pt,-6pt>[d]^{g}} & & A {\ar@<4pt>@{-}'+<0pt,-6pt>[d] \ar@<-4pt>@{-}'+<0pt,-6pt>[d]^{f}} & & B {\ar@<4pt>@{-}'+<0pt,-6pt>[d] \ar@<-4pt>@{-}'+<0pt,-6pt>[d]^{g}}			& & 		A {\ar@<4pt>@{-}'+<0pt,-6pt>[d] \ar@<-4pt>@{-}'+<0pt,-6pt>[d]^{f}} & & B {\ar@<4pt>@{-}'+<0pt,-6pt>[d] \ar@<-4pt>@{-}'+<0pt,-6pt>[d]^{g}} & & A {\ar@<4pt>@{-}'+<0pt,-6pt>[d] \ar@<-4pt>@{-}'+<0pt,-6pt>[d]^{f}} & & B {\ar@<4pt>@{-}'+<0pt,-6pt>[d] \ar@<-4pt>@{-}'+<0pt,-6pt>[d]^{g}}	\\
										C \ar@2{-}[d] & & C \ar@2{-}[drr]|{\psi} & & C \ar@2{-}[dll]|{\psi} & & C \ar@2{-}[d] & & C \ar@2{-}[d] & & C \ar@2{-}[drr]|{\psi} & & C \ar@2{-}[dll]|{\psi} & & C \ar@2{-}[d]  			& & 			C \ar@2{-}[d] & & C \ar@{-}[dr] & \ar@{}[d]|{m} & C \ar@{-}[dl] & & C \ar@2{-}[d] \\
										C \ar@{-}[dr] & \ar@{}[d]|{m} & C \ar@{-}[dl] & & C \ar@{-}[dr] & \ar@{}[d]|{m} & C \ar@{-}[dl] 		& \ \ \ = \ \ \ & 		C \ar@2{-}[d] & & C \ar@{-}[dr] & \ar@{}[d]|{m} & C \ar@{-}[dl] & & C \ar@2{-}[d] 		& \ \ \ = \ \ \ & 		C \ar@2{-}[d] & & & C \ar@{-}[dr] & \ar@{}[d]|{m} & & C \ar@{-}[dll] \\
										& C \ar@{-}[drr] & & \ar@{}[d]|{m} & & C \ar@{-}[dll] &		& & 		C \ar@2{-}[d] & & & C \ar@{-}[dr] & \ar@{}[d]|{m} & & C \ar@{-}[dll] 	& & 		C \ar@{-}[drr] & & \ar@{}[d]|{m} & & C \ar@{-}[dll] \\
										& & &  C & & & 				& & 		C \ar@{-}[drr] & & \ar@{}[d]|{m} & & C \ar@{-}[dll] 	& & 	& & & & C & & 	\\
										& & & & & & 					& & 		& & C & & } $$

La primer igualdad es válida por la asociatividad de $m$ (usada dos veces) y la segunda por la conmutatividad de $C$. Usando nuevamente la asociatividad de $m$ dos veces se obtiene el diagrama deseado.
\end{proof}

\subsection{Biálgebras y álgebras de Hopf} \label{BialgyHopf}

\begin{\de}
Una biálgebra $B$ en $\Vat$ es un álgebra y una coálgebra donde las dos estructuras son compatibles en el sentido de que los siguientes cuatro diagramas conmutan.

1.\xymatrix{ B \otimes B \otimes B \otimes B \ar[d]_{m \otimes m} & B \otimes B \ar[l]_>>>>{\triangle \otimes \triangle} \ar[d]^{m} \\
					 B \otimes B & B \ar[l]_{\triangle} }
\ \ \ \ 2. \xymatrix{ & B \otimes B \ar[dl]_{\varepsilon \otimes \varepsilon} \ar[dd]^{m} \\
				I \\
					 & B \ar[ul]^{\varepsilon} }

3. \xymatrix{ I \otimes I \ar[d]_{u \otimes u} & I \ar[l]_<<<<{\cong} \ar[d]^{u} \\
					 B \otimes B & B \ar[l]_<<<<{\triangle} } 
\ \ \ \ 4. \xymatrix{ & I \ar[dl]_{=} \ar[dd]^{u} \\
				I \\
					 & B \ar[ul]^{\varepsilon} }
\end{\de}

\begin{remark}
Si $\Vat$ es simétrica, la conmutatividad de los cuatro diagramas es equivalente a cualquiera de las siguientes condiciones.
\begin{enumerate}
	\item $m$ y $u$ son morfismos de coálgebras. 
	\item $\triangle$ y $\varepsilon$ son morfismos de álgebras.
\end{enumerate}

Luego, en el caso en que $\Vat$ es simétrica, una biálgebra es, por definición, un monoide en la categoría tensorial $Coalg_\Vat$ (o un comonoide en $Alg_\Vat$).
\end{remark}

\begin{\ej}
	Sea $(M, \cdot, 1_M)$ un monoide. Usaremos la notación de la delta de Kronecker para dos elementos $m,m'$ de $M$, o sea $\delta_{m,m'}$ será igual a $1$ si $m=m'$ y a $0$ en caso contrario. Tomemos $K = \{0,1\}$. Entonces $\Z$ es una $K$-álgebra con la unidad y multiplicación usuales y $\Z M = \oplus_M \Z$ es una \linebreak $K$-coálgebra con la estructura de $K$-e.v. dada por sumar en cada coordenada y la comultiplicación $\triangle$ definida en la base $\{ e_m \}_{m \in M} $, donde $(e_m)_{m'} = \delta_{m,m'}$, como $\triangle (e_m) = \sum_{m_1 \cdot m_2 = m} m_1 \otimes m_2$. La counidad es $e_m \mapsto \delta_{m,1_M }$
	
	Entonces $Hom_K (\Z M, \Z) = \{ f: M \rightarrow \Z$ de soporte finito$\}$ tiene una estructura de $K$-álgebra con el producto de convolución dado por \linebreak $f*g(e_m) = \sum_{m_1 \cdot m_2 = m} f(e_{m_1}) g(e_{m_2})$. Notemos que $f*g$ viene dada por la composición $\Z M \stackrel{\triangle}{\rightarrow}\Z M \otimes \Z M \stackrel{f \otimes g}{\rightarrow} \Z \otimes \Z \stackrel{m}{\rightarrow} \Z$. De forma similar se obtienen los ejemplos clásicos de convolución de polinomios y de funciones integrables. 
	
	Esta construcción se generaliza a las $K$-álgebras (es decir álgebras en la categoría tensorial $Vec_K$) de la siguiente forma: Si $C$ es una $K$-coálgebra y $A$ es una $K$-álgebra entonces el $K$-e.v. de las transformaciones $K$-lineales $Hom_K(C,A)$ es un álgebra con multiplicación $*$ y unidad $u \circ \varepsilon$, donde $f*g$ es la composición $C \stackrel{\triangle}{\rightarrow} C \otimes C \stackrel{f \otimes g}{\rightarrow} A \otimes A \stackrel{m}{\rightarrow} A$.
	En particular, si $B$ es una $K$-biálgebra entonces $[B,B]$ es una $K$-álgebra con multiplicación $*$. Una \linebreak $K$-biálgebra $B$ es una $K$-álgebra de Hopf si $id_B \in [B,B]$ es inversible, es decir si existe $a \in [B,B]$ (llamada antípoda) tal que $id_B * a = u \circ \varepsilon = a * id_B$.
\end{\ej}

El ejemplo anterior motiva la siguiente generalización de estas definiciones a categorías tensoriales cualesquiera.

\begin{\de}
Sea $\Vat$ una categoría tensorial simétrica con hom internos, $C$ una coálgebra y $A$ un álgebra en $\Vat$. Entonces el objeto de $\Vat$ $Hom(C,A)$ es un álgebra tomando la unidad $I \rightarrow Hom(C,A)$ dada por la ley exponencial a partir de la flecha $C \stackrel{\varepsilon}{\rightarrow} I \stackrel{u}{\rightarrow} A$ y la convolución $$Hom(C,A) \otimes Hom(C,A) \stackrel{*}{\rightarrow} Hom(C,A)$$ dada por la ley exponencial a partir de la flecha 

 \xymatrix{ Hom(C,A) \otimes Hom(C,A) \otimes C \ar@/^/[r]^{Hom(C,A) \otimes Hom(C,A) \otimes \triangle} & Hom(C,A) \otimes Hom(C,A) \otimes C \otimes C \cdots }
 \xymatrix{ \cdots \ar[rr]^<<<<<<<<{Hom(C,A) \otimes \psi \otimes C} & & Hom(C,A) \otimes C \otimes Hom(C,A) \otimes C \ar[r]^<<<<<{ev \otimes ev} & A \otimes A \ar[r]^{m} & A }
\end{\de}
 
Al igual que antes, si $B$ es una biálgebra entonces $End (B) = Hom(B,B)$ es un álgebra con multiplicación $*$. 

Para poder definir las álgebras de Hopf en $\Vat$. debemos poder interpretar qué significa que $id_B \in End (B)$ sea inversible, ya que la convolución no necesariamente es ahora una función definida sobre elementos sino una flecha abstracta en $\Vat$. Para ello notemos que la ley exponencial nos da la biyección

\begin{center}
\underline{$\ \ I \rightarrow End(B) \ \ $} \\
$B \rightarrow B$
\end{center}
que nos permite pensar que los elementos de $End(B)$ son las flechas \linebreak $I \rightarrow End(B)$. Entonces dada una flecha $a: B \rightarrow B$ tenemos flechas \linebreak $I \rightarrow End(B)$ en $\Vat$ dadas a partir de $id_B$ y $a$, que las llamamos $e$ y $\hat{a}$ respectivamente y que cumplen 

\begin{equation} \label{cumplen}
\xymatrix@C=-0.7pc @R=0.5pc { &   & & B \ar@2{-}[dd] & & & 						  	& & & & 				  & & & B \ar@2{-}[dd] & & \\
               & & & & & & B \ar@2{-}[dd] & & & & 				  & & & & & & B {\ar@<4pt>@{-}'+<0pt,-6pt>[dd] \ar@<-4pt>@{-}'+<0pt,-6pt>[dd]^{a}}\\
										& End(B) {\ar@<4pt>@{-}'+<-4pt,8pt>[uu] \ar@<-4pt>@{-}'+<4pt,8pt>[uu]^{\textcolor{white}{e}e\textcolor{white}{e}}} \ar@{-}[ddr] & \ar@{}[dd]|{ev} & B \ar@<4pt>@{-}[ddl] 			 & \ \  \ \  \ \   = \ \  \ \  \ \   & &  							& & \ \  \ \  \ \  \ \  \ \  \ \   \hbox{y} \ \  \ \  \ \  \ \  \ \  \ \   & &  & End(B) {\ar@<4pt>@{-}'+<-4pt,8pt>[uu] \ar@<-4pt>@{-}'+<4pt,8pt>[uu]^{\textcolor{white}{e}\hat{a}\textcolor{white}{e}}} \ar@{-}[ddr] & \ar@{}[dd]|{ev} & B \ar@<4pt>@{-}[ddl] 			 & \ \  \ \  \ \   = \ \  \ \  \ \   & &  \\
										& & & & & & B  & & & & 				  & & & & & & B \\
										& & B &																											 & & & 						 								& & & &					  & & B &	}
\end{equation}
										
Por lo tanto podemos aplicar la convolución a estos elementos de $End(B)$, mediante el diagrama

\xymatrix@C=-0.5pc { & & & & & & \! \! \! \! \!  B \ar@<-5pt>@{-}'+<-2pt,-10pt>[dl] \ar@<2pt>@{-}[dr] \ar@<-0.5em>@{}[d]|{\triangle} & & & & 			& & & & & B \ar@{-}[dll] \ar@{-}[drr] \ar@{}[d]|{\triangle} & & & & & &				& B \ar@{-}[dl] \ar@{-}[dr] \ar@{}[d]|{\triangle} & \\
										&  & & & & B \ar@2{-}[d] & & B \ar@2{-}[d]  & & & 		& & & B \ar@2{-}[d] & & & & B \ar@2{-}[d] & & & &				B \ar@2{-}[d] & & B {\ar@<4pt>@{-}'+<0pt,-6pt>[d] \ar@<-4pt>@{-}'+<0pt,-6pt>[d]^{a}} \\
										& End(B) {\ar@<4pt>@{-}'+<-4pt,8pt>[u] \ar@<-4pt>@{-}'+<4pt,8pt>[u]^{\textcolor{white}{e}e\textcolor{white}{e}}} \ar@2{-}[d] & & End(B) {\ar@<4pt>@{-}'+<-4pt,8pt>[u] \ar@<-4pt>@{-}'+<4pt,8pt>[u]^{\textcolor{white}{e}\hat{a}\textcolor{white}{e}}} \ar@2{-}[drr]|{\psi} & & B \ar@2{-}[dll]|{\psi} & & B \ar@2{-}[d] & \ \ = \ \ & & 													& End(B) {\ar@<4pt>@{-}'+<-4pt,8pt>[u] \ar@<-4pt>@{-}'+<4pt,8pt>[u]^{\textcolor{white}{e}e\textcolor{white}{e}}} \ar@{-}[dr] & \ar@<-0.3pc>@{}[d]|{ev} & B \ar@<2pt>@{-}[dl] & & End(B) {\ar@<4pt>@{-}'+<-4pt,8pt>[u] \ar@<-4pt>@{-}'+<4pt,8pt>[u]^{\textcolor{white}{e}\hat{a}\textcolor{white}{e}}} \ar@{-}[dr] & \ar@<-0.3pc>@{}[d]|{ev} & B \ar@<2pt>@{-}[dl] & & \ \ = \ \ & &			B \ar@{-}[dr] & \ar@{}[d]|{m} & B \ar@{-}[dl] \\
										& End(B) \ar@{-}[dr] & \ar@{}[d]|{ev} & B \ar@{-}[dl] & & End(B) \ar@{-}[dr] & \ar@<-0.3pc>@{}[d]|{ev} & B \ar@<2pt>@{-}[dl] & & & 			& & \! B \ar@{-}[drr] & & \ar@{}[d]|{\ \ \ \ \ m} & & \! B \ar@{-}[dll]	& & & & &  			& B		\\
										& & B \ar@{-}[drr] & & \ar@{}[d]|{m} & & \! B \ar@{-}[dll] & & & & 		& & & & B &	& & & & & & 		\\
										& & & & B & & & & & & }
										
La primer igualdad es por la naturalidad de $\psi$ (ver \eqref{swap}), y la segunda por \eqref{cumplen}. Luego, como la ley exponencial es una biyección, la igualdad entre los elementos de $End(B)$ que teníamos de la definición de las $K$-álgebras de Hopf la podemos traducir a una igualdad entre las flechas $B \rightarrow B$ que es la que nos permite hacer la siguiente definición.

\begin{\de}
	Sea $\Vat$ una categoría tensorial. Una biálgebra $B$ es un álgebra de Hopf en $\Vat$ si existe una flecha $a: B \rightarrow B$ en $\Vat$ (llamada antípoda) tal que el siguiente diagrama (y su análogo multiplicando por $a$ al otro lado) conmutan.
	
\ \ \ \ \ \ \ \ \ \ \ \ \ \ \ \ \ \ \ \ \ \ \ \ \ \ \xymatrix @C=0.5pc {	& B \otimes B \ar[rr]^{B \otimes a} & & B \otimes B \ar[dr]^{m} \\
						B \ar[ur]^{\triangle} \ar[drr]_{\varepsilon} & & & & B \\
						& & I \ar[urr]_{u}  }
\end{\de}

\subsection{$A$-módulos y $C$-comódulos}

\begin{\de}
	Sea $A$ un álgebra en $\Vat$. Un $A$-módulo (a izquierda) es un objeto $M$ de $\Vat$ con una flecha $\gamma: A \otimes M \rightarrow M$ llamada producto escalar tal que respeta la multiplicación y la unidad, es decir tal que los siguientes diagramas conmutan:
	
	Resp. mult.
	\xymatrix{ A \otimes A \otimes M \ar[r]^>>>>>{m \otimes M} \ar[d]_{A \otimes \gamma} & A \otimes M \ar[d]^{\gamma} \\
							A \otimes M \ar[r]^{\gamma} & M }
	\ Resp. unid.
	\xymatrix { I \otimes M \ar[r]^{u \otimes M} \ar[dr]_{\cong} & A \otimes M \ar[d]^{\gamma} \\
								& M }
	
\end{\de}

\begin{\de}
	Sea $C$ una coálgebra en $\Vat$. Un $C$-comódulo (a izquierda) es un objeto $M$ de $\Vat$ con una flecha $\rho: M \rightarrow C \otimes M$ llamada coproducto escalar tal que respeta la comultiplicación y la counidad, es decir tal que los siguientes diagramas conmutan:
	
	Resp. com.
	\xymatrix{ C \otimes C \otimes M & C \otimes M \ar[l]_>>>>>{C \otimes \rho} \\
						 C \otimes M \ar[u]^{\triangle \otimes M} & M \ar[u]_{\rho} \ar[l]_{\rho} }
	Resp. coun.
	\xymatrix{ I \otimes M & C \otimes M \ar[l]_{\varepsilon \otimes M} \\
							  & M \ar[ul]^{\cong} \ar[u]_{\rho} }
							  
\end{\de}

\begin{\de}
	Sean $A$ un álgebra en $\Vat$, $M$ y $M'$ dos $A$-módulos. Una flecha $f: M \rightarrow M'$ de $\Vat$ es un morfismo de $A$-módulos si el siguiente cuadrado conmuta:
	
	\ \ \ \ \ \ \ \ \ \ \ \ \xymatrix { A \otimes M \ar[r]^{A \otimes f} \ar[d]_{\gamma_M} & A \otimes M' \ar[d]^{\gamma_{M'}} \\
							M \ar[r]^{f} & M' }
\end{\de}

\begin{\de}
	Sean $C$ una coálgebra en $\Vat$, $M$ y $M'$ dos $C$-comódulos. Una flecha $f: M \rightarrow M'$ de $\Vat$ es un morfismo de $C$-comódulos si el siguiente cuadrado conmuta:

	\ \ \ \ \ \ \ \ \ \ \ \ \xymatrix { C \otimes M \ar[r]^{C \otimes f} & C \otimes M' \\
							M \ar[u]^{\rho_M} \ar[r]^{f} & M' \ar[u]_{\rho_{M'}} }
\end{\de}

\pagebreak
\section{Ends y Coends} \label{sec:Ends}

\subsection{Definiciones}

Seguimos en las definiciones a Mac Lane (\cite{ML}, IX, 4-6, pp.214-222).

\begin{\de}
	Sean $\Cat, \Bat$ dos categorías y $S, T : \Cat^{op} \times \Cat \rightarrow \Bat$ funtores. Una transformación di-natural $\alpha: S \stackrel{..}{\rightarrow} T$ es tener para cada objeto $C$ de $\Cat$ una flecha $\alpha_C : S(C,C) \rightarrow T(C,C)$ en $\Bat$, de forma tal que para cada flecha $f: C \rightarrow C'$ de $\Cat$ el siguiente diagrama conmute
	
	\ \ \ \ \ \ \ \ \ \ \ \ 	\xymatrix{
		& S(C,C) \ar[r]^{\alpha_C} & T(C,C) \ar[dr]^{T(1,f)} \\
		S(C',C) \ar[ur]^{S(f,1)} \ar[dr]_{S(1,f)} & & & T(C,C') \\
		& S(C',C') \ar[r]^{\alpha_{C'}} & T(C',C') \ar[ur]_{T(f,1)}	}
\end{\de}

Nos interesan para las definiciones de end y coend, las transformaciones dinaturales cuando uno de los dos funtores involucrados es constante.

\begin{\de} \label{def:wedge}
	Sean $\Cat, \Bat$ dos categorías, $T : \Cat^{op} \times \Cat \rightarrow \Bat$ un funtor y $B$ un objeto de $\Bat$. Un di-cono (llamado wedge en \cite{ML}) de $B$ a $T$ es una transformación di-natural $\alpha: S \stackrel{..}{\rightarrow} T$ donde $S$ es el funtor constante $B$. Notamos $\alpha: B \stackrel{..}{\rightarrow} T$
\end{\de}

El diagrama queda en este caso, para toda flecha $f: C \rightarrow C'$ en $\Cat$

	\ \ \ \ \ \ \ \ \ \ \ \ 	\xymatrix{
		B \ar[r]^{\alpha_C} \ar[d]_{\alpha_{C'}} & T(C,C) \ar[d]^{T(1,f)} \\
		T(C',C') \ar[r]^{T(f,1)} & T(C,C') }
		
pues el lado izquierdo del hexágono se "`colapsa"' a $B$. Tenemos la definición análoga

\begin{\de} \label{def:cowedge}
	Sean $\Cat, \Bat$ dos categorías, $S : \Cat^{op} \times \Cat \rightarrow \Bat$ un funtor y $B$ un objeto de $\Bat$. Un di-cono (también llamado wedge en \cite{ML}) de $S$ a $B$ es una transformación di-natural $\alpha: S \stackrel{..}{\rightarrow} T$ donde $T$ es el funtor constante $B$. Notamos $\alpha: S \stackrel{..}{\rightarrow} B$
\end{\de}

El diagrama queda en este caso, para toda flecha $f: C \rightarrow C'$ en $\Cat$

	\ \ \ \ \ \ \ \ \ \ \ \ 	\xymatrix{
		S(C',C) \ar[r]^{S(1,f)} \ar[d]_{S(f,1)} &	 S(C',C') \ar[d]^{\alpha_{C'}} \\
		S(C,C) \ar[r]^{\alpha_C} & B }

pues ahora el lado derecho del hexágono se "`colapsa"' a $B$.

Ahora, dado un funtor $T: \Cat^{op} \times \Cat \rightarrow \Bat$ podemos definir su end como su di-cono universal:

\begin{\de}
	Sean $\Cat, \Bat$ dos categorías y $T : \Cat^{op} \times \Cat \rightarrow \Bat$ un funtor. Un end $(B, \alpha)$ de $T$ es un par compuesto por un objeto $B$ de $\Bat$ y un di-cono \linebreak $\alpha: B \stackrel{..}{\rightarrow} T$ tal que para todo otro di-cono $\alpha': B' \stackrel{..}{\rightarrow} T$ existe una única flecha $h : B' \rightarrow B$ en $\Cat$ tal que $\alpha_C' =  \alpha_C \circ h$ para todo $C$ en $\Cat$.
	
	En forma de diagrama, tenemos:
		
		\ \ \ \ \ \ \ \ \ \ \ \ 	\xymatrix{ B  \ar[r]^{\alpha_C} & T(C,C) \\
																				 B' \ar@{.>}[u]^{h} \ar[ur]_{\alpha_C'} }	

	Como todo objeto definido por propiedades universales, el end de un funtor es único salvo isomorfismos que preservan la estructura. Luego podemos referirnos al end de $T$: cuando hagamos eso nos estaremos refiriendo al objeto $B$, y lo notaremos $B = \int_\Cat T(C,C)$
\end{\de}

Análogamente, dado un funtor $S: \Cat^{op} \times \Cat \rightarrow \Bat$ podemos definir su coend como el end en la categoría dual:

\begin{\de}
	Sean $\Cat, \Bat$ dos categorías y $S : \Cat^{op} \times \Cat \rightarrow \Bat$ un funtor. Un coend $(B, \alpha)$ de $S$ es un par compuesto por un objeto $B$ de $\Bat$ y un di-cono \linebreak $\alpha: S \stackrel{..}{\rightarrow} B$ tal que para todo otro di-cono $\alpha': S \stackrel{..}{\rightarrow} B'$ existe una única flecha $h : B \rightarrow B'$ en $\Cat$ tal que $\alpha_C' =  h \circ \alpha_C$ para todo $C$ en $\Cat$.
	
	En forma de diagrama, ahora tenemos:
		
		\ \ \ \ \ \ \ \ \ \ \ \ 	\xymatrix{ S(C,C) \ar[dr]_{\alpha_C} \ar[r]^{\alpha_C'} & B'\\
																				 & B \ar@{.>}[u]_{h} }

	También como antes, por estar definido por una propiedad universal podemos referirnos al coend de $S$: cuando hagamos eso nos estaremos refiriendo al objeto $B$, y lo notaremos $B = \int^\Cat S(C,C)$
\end{\de}

De forma análoga a como se prueba en \cite{ML}, V.4.Theorem 1, p.112 que el funtor $Hom(C,-): \Cat \rightarrow \Ens$ preserva los límites y el funtor $Hom(-,C): \Cat \rightarrow \Ens$ transforma colímites en límites, se puede demostrar la siguiente propiedad.

\begin{\prop} \label{coendaend}
	Sean $\Cat, \Bat$ dos categorías, $T : \Cat^{op} \times \Cat \rightarrow \Bat$ un funtor y $B$ un objeto de $\Bat$. Si existen el end $\int_\Cat T(C,C)$ y/o el coend $\int^\Cat T(C,C)$, entonces existe también el respectivo miembro derecho de las igualdades siguientes y vale que
	$$[B,\int_\Cat T(C,C)] = \int_\Cat [B,T(C,C)] \hbox{\ \ \ y/o \ \ \ } [\int^\Cat T(C,C),B] = \int_\Cat [T(C,C),B]$$
\end{\prop}

\subsection{Nat como un end}

Veremos ahora dos ejemplos, un end y un coend, que nos servirán luego. El primero son las transformaciones naturales entre funtores.

\begin{\prop} \label{NatEnd}
	Sean $\Cat$ y $\Xat$ dos categorías, con $\Cat$ pequeña, y $U,V: \Cat \rightarrow \Xat$ dos funtores. Entonces $$nat[U,V] = \int_\Cat [UC,VC]$$
\end{\prop}
\begin{proof}
	Es estándar el hecho de que $[U-,V-]: \Cat^{op} \times \Cat \rightarrow \Ens$ es un funtor. Tenemos el di-cono $\alpha : nat[U,V] \stackrel{..}{\rightarrow} [U-,V-]$ definido según $\alpha_C(\theta) = \theta_C$. La dinaturalidad de $\alpha$ es la naturalidad de $\theta$. Falta ver la universalidad de este di-cono. Si tengo otro di-cono $\alpha' : B' \stackrel{..}{\rightarrow} [U-,V-]$, tengo entonces para cada $C$ de $\Cat$ la flecha $\alpha_C' : B' \rightarrow [UC,VC]$ tal que el cuadrado 

	\ \ \ \ \ \ \ \ \ \ \ \ 	\xymatrix{
		B' \ar[r]^{\alpha_C'} \ar[d]_{\alpha_{C'}'} & [UC,VC] \ar[d]^{V(f)} \\
		[UC',VC'] \ar[r]^{U(f)} & [UC,VC'] }

conmuta para toda flecha $f: C \rightarrow C'$ en $\Cat$. Eso quiere decir, que, para todo $b' \in B'$, el cuadrado 

	\ \ \ \ \ \ \ \ \ \ \ \ \ \ \ \ \ \ 	\xymatrix{
		UC \ar[r]^{\alpha_C'(b')} \ar[d]_{U(f)} & VC \ar[d]^{V(f)} \\
		UC' \ar[r]^{\alpha_{C'}'(b')} & VC'		}
		
conmuta. Tenemos que mostrar que existe una única $h: B' \rightarrow nat[U,V]$ tal que $\alpha_C' =  \alpha_C \circ h$ para todo $C$ en $\Cat$. Luego debe ser $$(h(b'))_C = \alpha_C(h(b')) = \alpha_C'(b')$$ 

Resta observar que el cuadrado anterior equivale con esta definición precisamente a que $h(b')$ sea una transformación natural.
\end{proof}

	Recordemos que si $\Vat$ es una categoría tensorial, una categoría enriquecida en $\Vat$ es tener
\begin{itemize}
	\item una familia $\Cat$ de objetos
	\item para cada par de objetos $C,D$ de $\Cat$ un objeto $Hom(C,D)$ de $\Vat$
	\item para cada terna de objetos $C,D,E$ de $\Cat$ una flecha en $\Vat$ asociativa $Hom(D,E) \otimes Hom(C,D) \stackrel{\circ}{\rightarrow} Hom(C,E)$
	\item para cada objeto $C$ una flecha $I \rightarrow Hom(C,C)$ en $\Vat$ neutro para $\circ$ a ambos lados.
\end{itemize}
	 
	 Dada una categoría enriquecida en $\Vat$, se obtiene una categoría subyacente aplicando el funtor $[I,-]$. Se puede pensar también que se comienza con una categoría $\Cat$, y que esta es una categoría enriquecida en $\Vat$ si su bifuntor $[-,-]$ admite un levantamiento a $\Vat$ en el siguiente sentido:

\ \ \ \ \ \ \ \ \ \ \ \ \ \ \ \ \ \ \ \ 	 \xymatrix@C=4pc{ & \Vat \ar[d]^{[I,-]} \\
	 				\Cat \otimes \Cat^{op} \ar@{.>}[ru]^{Hom(-,-)} \ar[r]^>>>>>>>{[-,-]} & \Ens}

\begin{remark} \label{Vautoenrique}
	Que una categoría tensorial $\Vat$ tenga hom internos equivale a que esté enriquecida en $\Vat$.
\end{remark}

	 	 Las categorías enriquecidas (o $\Vat$-categorías) son sólo la "`puerta de entrada"' al mundo enriquecido. Funtores, transformaciones naturales, límites, adjunciones pueden enriquecerse sobre una categoría tensorial. La sección "`terminology"' de \cite{tesisdubuc} es una buena introducción compacta al tema. Se tiene en particular la siguiente "`versión enriquecida"' de la propiedad \ref{coendaend}:
	 	 
\begin{\prop} \label{coendaendenrique}
	Sean $\Cat, \Bat$ dos categorías, con $\Bat$ enriquecida en $\Vat$, $T : \Cat^{op} \times \Cat \rightarrow \Bat$ un funtor y $B$ un objeto de $\Bat$. Si existen el end $\int_\Cat T(C,C)$ y/o el coend $\int^\Cat T(C,C)$, entonces existe también el respectivo miembro derecho de las igualdades siguientes y vale que
	$$Hom(B,\int_\Cat T(C,C)) = \int_\Cat Hom(B,T(C,C)) \hbox{\ \ \ y/o \ \ \ } $$
	$$Hom(\int^\Cat T(C,C),B) = \int_\Cat Hom(T(C,C),B)$$
	Notemos que el end y el coend de la izquierda se calculan sobre $\Bat$ y los de la derecha sobre $\Vat$, considerando para estos los bifuntores $Hom(B,T(-,-))$ y $Hom(T(-,-),B)$.
\end{\prop}	 	 
	
\begin{remark} \label{coendaendenV}
	Por la observación \ref{Vautoenrique}, la propiedad anterior también se aplica si $\Vat$ es una categoría tensorial con hom internos y $T : \Cat^{op} \times \Cat \rightarrow \Vat$ un funtor.
\end{remark}
	 
\begin{\de} \label{cocat}
	Una co-categoría enriquecida en $\Vat$ es una categoría enriquecida en $\Vat^{op}$
\end{\de}

	Sean $\Cat$ una categoría pequeña, $\Vat$ una categoría tensorial cocompleta, $\Bat$ una categoría enriquecida en $\Vat$ y $F, G: \Cat \rightarrow \Bat$ dos funtores. Podemos armar entonces el bifuntor $Hom(F-,G-): \Cat^{op} \otimes \Cat \rightarrow \Vat$ y considerar el end $\int_\Cat Hom(FC,GC)$ en $\Vat$, al que llamaremos $Nat(F,G)$ pues es un objeto de $\Vat$ que representa a las transformaciones naturales entre $F$ y $G$ en el siguiente sentido (recordemos que el funtor $[I,-]$ nos permite recuperar la categoría subyacente): usando para la segunda igualdad la propiedad \ref{coendaend}, se tiene
	
	$$[I,Nat(F,G)] = [I,\int_\Cat Hom(FC,GC)] = \int_\Cat [I,Hom(FC,GC)] = $$ $$ = \int_\Cat [FC,GC] = nat[F,G]$$

Es decir, el conjunto subyacente de $Nat(F,G)$ es $nat[F,G]$.
	
\subsection{Producto tensorial entre funtores}

El segundo ejemplo, este de un coend, es la extensión del producto tensorial entre $R$-módulos al producto tensorial entre dos funtores, uno contravariante y uno covariante, sobre cualquier categoría tensorial.

Recordemos lo siguiente:
\begin{itemize}
	\item Si $A$ es un $R$-módulo a derecha y $B$ es un $R$-módulo a izquierda, se define su producto tensorial $A \otimes_R B$ como $R$-módulos identificando en el producto tensorial entre $A$ y $B$ como grupos abelianos a $a \cdot r \otimes b$ con $a \otimes r \cdot b$.
	Queda definido también como el colímite en $Ab$ del diagrama
	\begin{equation} \label{rmodulos}
	\xymatrix { A \otimes B \ar[r]^{A \otimes (r \cdot - ) } \ar[d]^{(- \cdot r ) \otimes B } & A \otimes B \ar[ddr] \\
							A \otimes B	\ar[drr] \\
							& & A \otimes_R B }
	\end{equation}
	indexado por los elementos $r$ de $R$.
	\item Todo monoide $M$ se puede interpretar como una categoría con un solo objeto, al que también llamamos $M$, donde las flechas son los elementos $m \in M$ y la composición es el producto de $M$.
	\item Una categoría $\Cat$ es aditiva si $\forall X,Y \in \Cat$ los conjuntos $[X,Y]$ admiten cada uno una estructura de grupo abeliano tal que la composición es bilineal, es decir se tiene $$[Y,Z] \otimes [X,Y] \stackrel{\circ}{\rightarrow} [X,Z]$$
	 Un funtor $F$ entre categorías aditivas es aditivo si visto como función entre los conjuntos $[X,Y]$ y $[FX,FY]$ es un morfismo de grupos.
\end{itemize}

Luego, de forma similar a como se puede hacer para las representaciones de grupos, si a un anillo $(R,+,\cdot)$ lo miramos como monoide $(R,\cdot)$ y lo interpretamos como categoría $\Rat$, los axiomas de su suma se corresponden con que $\Rat$ sea una categoría aditiva. Una vez hecho esto, notamos que un $R$-módulo a derecha $A$ es un funtor aditivo $\Rat^{op} \rightarrow Ab$, que manda el objeto simbólico de $\Rat$ a $A$, y análogamente un $R$-módulo a izquierda $B$ es un funtor aditivo $\Rat \rightarrow Ab$, que manda el objeto simbólico de $\Rat$ a $B$ (poder multiplicar elementos de $A$ (o $B$) con escalares de $R$ se corresponde con tener para cada flecha $r$ de $\Rat$ un endomorfismo de $A$ (o $B$), y los axiomas de módulo se corresponden con la aditividad del funtor). 

El coproducto $A \otimes B$ tiene entonces una estructura de $R$-módulo a derecha inducida por la estructura de $A$ ($(a \otimes b) \cdot r = (a \cdot r) \otimes b)$ y análogamente una estructura de $R$-módulo a izquierda inducida por la estructura de $B$. Esto equivale a que $\Rat^{op} \times \Rat \rightarrow Ab$, $(\overline{R},R) \mapsto A \otimes B$ es un funtor. 

\begin{\prop}
	$\int^\Rat A \otimes B = A \otimes_R B$

\end{\prop}

\begin{proof}
	Como $\Rat$ tiene un solo objeto, la propiedad universal de ser coend coincide con la de ser el límite del diagrama \eqref{rmodulos}.
\end{proof}

Esta igualdad nos permite definir el produto tensorial entre dos funtores, uno contravariante y uno covariante, sobre cualquier categoría tensorial, coincidiendo con el producto usual de $R$-módulos en el caso recién visto. Recordemos que una categoría es cocompleta si tiene todos los colímites (y por lo tanto todos los coends) pequeños.

\begin{\de}
	Sean $\Cat$ una categoría pequeña, $\Vat$ una categoría tensorial cocompleta y $A : \Cat^{op} \rightarrow \Vat$, $B : \Cat \rightarrow \Vat$ dos funtores. Se define el producto tensorial entre $A$ y $B$ sobre $\Cat$ como el objeto de $\Vat$ $$ A \otimes_\Cat B = \int^\Cat AC \otimes BC  $$
\end{\de}

	Definimos también el producto tensorial externo entre los funtores covariantes $B: \Cat \rightarrow \Vat$ e $Y: \Dat \rightarrow \Vat$ como un nuevo funtor covariante \linebreak $B \ \underline{\otimes} \  Y : \Cat \times \Dat \rightarrow \Vat$ que se define en los objetos como \linebreak $(B \ \underline{\otimes} \  Y) (C,D) = BC \otimes YD$ y en las flechas como \linebreak $(B \ \underline{\otimes} \  Y) (f,g) = B(f) \otimes Y(g)$. Queda definido también entre funtores contravariantes. La siguiente propiedad relaciona ambos productos.

\begin{\prop} \label{dobleproducto}
	Sean $\Cat$ y $\Dat$ dos categorías pequeñas, $\Vat$ una categoría tensorial cocompleta y $A : \Cat^{op} \rightarrow \Vat$, $B : \Cat \rightarrow \Vat$, $X : \Dat^{op} \rightarrow \Vat$ e $Y : \Dat \rightarrow \Vat$ cuatro funtores. Entonces, si $\Vat$ es simétrica y los funtores $(-)\otimes V : \Vat \rightarrow \Vat$ preservan coends para todo objeto $V$ de $\Vat$, vale la igualdad
	$$ (A \otimes_\Cat B) \ \underline{\otimes} \  (X \otimes_\Dat Y) \cong (A \ \underline{\otimes} \  X) \otimes_{\Cat \times \Dat} (B \ \underline{\otimes} \  Y)$$
\end{\prop}

\begin{proof}
	Como los funtores $(-) \otimes V$, con $V \in \Vat$ preservan coends, tomando $V = \int^\Dat XD \otimes YD$ tenemos
	\begin{eqnarray*}
	 (\int^\Cat AC \otimes BC) \otimes (\int^\Dat XD \otimes YD) & = & \\
	 \int^\Cat \int^\Dat AC \otimes BC \otimes  XD \otimes YD & \cong & \\
	 \int\int^{\Cat \times \Dat} (A  \ \underline{\otimes} \  X)(C,D) \otimes (B  \ \underline{\otimes} \  Y)(C,D) & = & \\
	 (A  \ \underline{\otimes} \  X) \otimes_{\Cat \times \Dat} (B  \ \underline{\otimes} \  Y) 
	\end{eqnarray*}

Para pasar de la segunda a la tercera expresión usamos la simetría de $\Vat$ y la proposición análoga a Fubini que dice que la doble integral de coends se calcula iteradamente (ver \cite{ML}, IX, 8. Iterated Ends and Limits, p.226).	
\end {proof}

Observemos que la condición para $\Vat$ de que los funtores $(-) \otimes V : \Vat \rightarrow \Vat$ preserven coends (y colímites en general) se tiene en particular si $\Vat$ tiene hom internos, pues en ese caso estos funtores tienen adjuntos a derecha.

\begin{\prop} \label{c}
	Sean $\Cat$ una categoría pequeña, $\Vat$ una categoría tensorial cocompleta con hom internos y $A: \Cat^{op} \rightarrow \Vat$ un funtor. Entonces se tiene la adjunción
	
\ \ \ \ \ \ \ \ \ \ \ \ \ \ \ \ \ \ \ \ \ \ \ \	\ \ \ \ \ \ \ \ \ \ \ \ \xymatrix{ {\Vat}^{\Cat} \ar@/^/[r]^{(-) \otimes_\Cat A}_\bot & \Vat \ar@/^/[l]^{Hom(A,-)} }
	
	dada por la biyección entre las flechas
\begin{center}
	\underline{$B \Rightarrow Hom(A,V)$} \\
	$B \otimes_\Cat A \rightarrow V$
\end{center}	
\end{\prop}

\begin{proof}

	Aclaremos primero que $Hom(A,V): \Cat \rightarrow \Vat$ es el funtor $Hom(A,V) (C) = Hom(AC, V)$, $Hom(A,V)(C \stackrel{c}{\rightarrow} C')=A(c)^*$ dada por "`componer primero con $A(c)$ "' (no confundir esta ${}^*$ con la del dual en una categoría rígida).
	
	Veremos entonces la biyección natural entre las flechas $B \otimes_\Cat A \rightarrow V$ y las transformaciones naturales $B \Rightarrow Hom(A,V)$. En efecto, una flecha $B \otimes_\Cat A \stackrel{f}{\rightarrow} V$, como un coend es un di-cono universal, es lo mismo que tener flechas $BC \otimes AC \stackrel{f_C}{\rightarrow} V$ tales que para toda flecha $c: C \rightarrow C'$ en $\Cat$ el diagrama 
	
	\ \ \ \ \ \ \ \ \ \ \ \ \xymatrix { BC \otimes AC' \ar[r]^{BC \otimes A(c)} \ar[d]_{B(c) \otimes AC'} & BC \otimes AC \ar[d]^{f_C} \\
							BC' \otimes AC' \ar[r]^{f_{C'}} & V }
							
	conmuta. Pero por la ley exponencial, esto es lo mismo que tener flechas $BC \stackrel{\tilde{f_C}}{\rightarrow} Hom (AC,V)$. La naturalidad de la $B \stackrel{\tilde{f}}{\Rightarrow} Hom(A,V)$ inducida viene dada por el diagrama
	
	\ \ \ \ \ \ \ \ \ \ \ \ \ \ \ \ \ \ \xymatrix { BC \ar[r]^>>>>>{\tilde{f_C}} \ar[d]_{Bc} & Hom(AC,V) \ar[d]^{A(c)^*}	\\
							BC' \ar[r]^>>>>>{\tilde{f_{C'}}} & Hom(AC',V) }
													
	Debemos ver que la conmutatividad de ambos diagramas es equivalente. Ahora bien, la naturalidad en $A$ de la ley exponencial para \linebreak $(-)\otimes AC \dashv Hom(AC,-)$ nos da la equivalencia
	
\xymatrix { {}  \ar@<-4ex>@{-}[rrrr] & BC \otimes AC' \ar[r]^{BC \otimes A(c)} \ar@/^2pc/[rr]^{\triangle} & BC \otimes AC \ar[r]^{f_C} & V & {} \\
						{} & BC \ar[r]^{\tilde{f_C}} \ar@/_2pc/[rr]_{\nabla} & Hom(AC,V) \ar[r]^{A(c)^*} & Hom(AC',V) & {} }
	 
	mientras que la naturalidad en $B$ de $(-)\otimes AC' \dashv Hom(AC',-)$ nos da 
	
\xymatrix { {}  \ar@<-4ex>@{-}[rrrr] & BC \otimes AC' \ar[r]^{B(c) \otimes AC} \ar@/^2pc/[rr]^{\triangle} & BC' \otimes AC' \ar[r]^{f_{C'}} & V & {} \\
						{} & BC \ar[r]^{B(c)} \ar@/_2pc/[rr]_{\nabla} & BC' \ar[r]^{\tilde{f_{C'}}} & Hom(AC',V) & {} }	

	Luego, la conmutatividad del primer cuadrado equivale a la igualdad de las dos flechas marcadas con $\triangle$, que equivale a la igualdad de las dos flechas marcadas con $\nabla$, que equivale a la conmutatividad del segundo cuadrado.
\end{proof}

\pagebreak
\section{Nat Predual} \label{sec:NatPredual}

\subsection{Definición y predualidad}

A partir de esta sección, $\Vat$ será una categoría tensorial cocompleta con hom internos, $\Vat_0$ una subcategoría tensorial de $\Vat$ en la cual todo objeto tiene un dual a derecha y $\Cat$ será siempre pequeña.

El ejemplo motivador de $\Vat$ y $\Vat_0$, aunque simétrico, son los $K$-e.v. de dimensión finita dentro de la categoría de todos los $K$-e.v. Podemos realizar la siguiente definición (cuya idea original es de Joyal).

\begin{\de} \label{nuestradef}
	Sean $F, G: \Cat \rightarrow \Vat$ dos funtores tales que $G$ toma valores en $\Vat_0$. Definimos $Nat^{\lor}(F,G)$ como el coend $\int^\Cat FC \otimes (GC)^{\wedge}$
\end{\de}
	
	Notemos que el funtor $G$ define el funtor contravariante $G^\wedge$ según \linebreak $G^\wedge(C) = (GC)^\wedge$, $G^\wedge(f) = G(f)^\wedge$ (que se obtiene componiendo este $G$ con el funtor contravariante dado por la dualidad a derecha, también llamado $G$ en la sección \ref{subsec:DualidadGlobal}), que es el que estamos utilizando para armar el coend. 
	
\begin{remark} \label{a}
	$Nat^\lor(F,G)$ resulta igual al producto tensorial entre los funtores $F$ y $G^\wedge$: $$Nat^\lor(F,G) = F \otimes_\Cat G^\wedge$$
\end{remark}

Nos interesa comparar esta definición con la de \cite{JS}, 3, p.432. Allí se define (aunque sin hacer mención al coend) $Nat^\lor(F,G) = \int^\Cat Hom(FC,GC)^{\lor}$, y se muestra en la página 439 un isomorfismo $G^\lor \otimes_\Cat F \cong Nat^\lor(F,G)$. Nosotros realizamos una definición de $Nat^\lor(F,G)$ similar al miembro izquierdo del isomorfismo, pues este existe con mayor generalidad, y mostraremos a continuación que nuestra definición es equivalente, con una hipótesis adicional, a una definición análoga a la de \cite{JS}.

Notemos que aunque $GC$ caiga en $\Vat_0$, no necesariamente $Hom(FC,GC)$ va a tener un dual a derecha, pues si $FC$ no tiene un dual a izquierda no podemos afirmar que $Hom(FC,GC) = GC \otimes FC^\lor$. En cambio, si $FC$ tiene un dual a izquierda entonces
$$ FC \otimes GC^\wedge = {FC^\lor}^\wedge \otimes GC^\wedge = (GC \otimes FC^\lor)^\wedge = Hom(FC,GC)^\wedge $$

Luego tenemos

\begin{remark} \label{compatibJoyal}
	Sean $F, G: \Cat \rightarrow \Vat$ dos funtores tales que $G$ toma valores en $\Vat_0$ y $FC$ tiene un dual a izquierda para todo $C$. Entonces	$$Nat^{\lor}(F,G) = \int^\Cat Hom(FC,GC)^{\wedge}$$
\end{remark}

La idea de la predualidad aparece en la siguiente propiedad, que luego probaremos también para la definición \ref{nuestradef}.

\begin{remark} \label{predualidadJoyal}
	Bajo las hipótesis de la observación anterior, existe el end $Nat(F,G)$ en $\Vat$ y $$Hom(\int^\Cat Hom(FC,GC)^{\wedge},I) = Nat(F,G)$$ 
\end{remark}

\begin{proof}
	Como $Hom(FC,GC)$ tiene un dual a derecha \linebreak $Hom(FC,GC)^\wedge$, este objeto tiene como dual a izquierda a $Hom(FC,GC)$; y como toda dualidad a izquierda viene dada por hom internos tenemos 
	$$Hom(Hom(FC,GC)^{\wedge}, I) = Hom(FC,GC)^{{\wedge}^{\lor}} = Hom(FC,GC)$$
	
	Luego, por la propiedad \ref{coendaendenrique}, tenemos la siguiente cadena de igualdades que nos da además la existencia de $Nat(F,G)$ 
	$$Hom(\int^\Cat Hom(FC,GC)^{\wedge}, I) = \int_\Cat Hom(Hom(FC,GC)^{\wedge}, I) = $$
	$$ \ \ \ \ \ \ \ \ \ \ \ = \int_\Cat Hom(FC,GC) = Nat(F,G) $$
\end{proof}

Veremos ahora que la predualidad también es válida sin la hipótesis de que $FC$ tenga un dual a izquierda.

\begin{\prop} \label{predualidad}
	Sean $F, G: \Cat \rightarrow \Vat$ dos funtores tales que $G$ toma valores en $\Vat_0$. Entonces existe el end $Nat(F,G)$ en $\Vat$ y $$Hom(Nat^\lor(F,G),I) = Nat(F,G)$$ 
\end{\prop}

\begin{proof}
	Tenemos la siguiente biyección
	
\vspace{2ex}	
	
\xymatrix@C=1pc @R=0pc { & A \otimes FC \otimes GC^\wedge \ar[r] & B \\
						 \ar@{-}[rrr] & & & \hbox{adjunción propiedad \ref{CadjC*}} \\ 
						  			 & A \otimes FC \ar[r] & B \otimes GC \\
						 \ar@{-}[rrr] & & & \hbox{ley exponencial para } FC \\ 			 
						  			 & A \ar[r] & Hom(FC, B \otimes GC) }

\vspace{2ex}
	
	que nos dice que $Hom(FC \otimes GC^\wedge,-) = Hom(FC, (-) \otimes GC)$, y en particular que $Hom(FC \otimes GC^\wedge,I) = Hom(FC,GC)$. Luego, similarmente a como demostramos la observación \ref{predualidadJoyal}, tenemos
	
	$$Hom(Nat^\lor(F,G),I) = Hom(\int^\Cat FC \otimes GC^\wedge, I) = $$
	$$ = \int_\Cat Hom(FC \otimes GC^{\wedge}, I) = \int_\Cat Hom(FC,GC) = Nat(F,G) $$
\end{proof}

\begin{remark}
	La propiedad anterior \textbf{no} dice que $Nat(F,G)$ sea un dual a izquierda de $Nat^\lor(F,G)$ en el sentido de la sección \ref{sec:Dualidad}. Sin embargo sí nos dice (pues de haber dualidad esta viene dada por el hom interno) que, si $Nat^\lor(F,G)$ tiene un dual a izquierda, este debe ser $Nat(F,G)$; o equivalentemente, que si $Nat(F,G)$ tiene un dual a derecha, este debe ser $Nat^\lor(F,G)$.
\end{remark}

\begin{remark} \label{casonoenrique}
	Aplicando el funtor $[I,-]$ se obtiene de la propiedad anterior la igualdad de conjuntos $[Nat^{\lor}(F,G),I] = nat[F,G]$.
\end{remark}

	Definimos también el producto tensorial interno entre los funtores covariantes $F_1, F_2: \Cat \rightarrow \Vat$ como un nuevo funtor covariante $F_1 \otimes F_2 : \Cat \rightarrow \Vat_0$ que se define en los objetos como $(F_1 \otimes F_2) (C) = F_1(C) \otimes F_2(C)$ y en las flechas como $(F_1 \otimes F_2) (f) = F_1(f) \otimes F_2(f)$. No debe confundirse este producto con $\underline{\otimes}$ ni con $\otimes_\Cat$, si bien se tiene la igualdad $\otimes = \underline{\otimes} \circ \delta$, donde $\delta: \Cat \rightarrow \Cat \times \Cat$ es la diagonal.

Observemos ahora que la igualdad $$Hom(GC^\wedge, V) = V \otimes {GC^\wedge}^\lor = V \otimes GC$$ nos da la igualdad entre los funtores

\begin{equation} \label{b}
	Hom(G^\wedge,V) = V \otimes G
\end{equation}

Damos a continuación una adjunción para $Nat^\lor(-,G)$ análoga a la de la propiedad \ref{c}, y que es consecuencia directa de esta propiedad y de \eqref{b}:

\begin{\prop} \label{adjuncionpredual}
	Sean $\Cat$ una categoría, y $G : \Cat \rightarrow \Vat$ un funtor que toma valores en $\Vat_0$. Se tiene la siguiente adjunción
	
		\ \ \ \ \ \	\ \ \ \ \ \ \ \ \ \ \ \ \ \ \ \ \ \ \ \ \ \ \ \ \ \ \ \ \ \ \ \ \ \ \ \ \ \ \ \ \xymatrix { {\Vat}^{\Cat} \ar@/^/[r]^{Nat^{\lor}(-,G)}_\bot & \Vat \ar@/^/[l]^{(-) \otimes G} }
	
	dada por la biyección entre las flechas
	
\begin{center}
	\underline{$Nat^\lor(F,G) \rightarrow V$} \\
	$F \Rightarrow V \otimes G$
\end{center}
	
\end{\prop}

\begin{proof} 

	Verificamos primero que $Nat^{\lor}(-,G)$ es un funtor. Dada una flecha $\theta: F \Rightarrow H$ en $\Vat^\Cat$, debemos construir $$Nat^\lor(F,G) \stackrel{Nat^\lor(-,G)(\theta)}{\longrightarrow} Nat^\lor(H,G)$$ Tenemos las flechas $FC \otimes GC^\wedge \stackrel{\theta_C \otimes (GC)^\wedge}{\longrightarrow} HC \otimes (GC)^\wedge \stackrel{\lambda_C}{\longrightarrow} Nat^\lor(H,G)$, para que induzcan la flecha $Nat^\lor(F,G) \stackrel{Nat^\lor(-,G)(\theta)}{\longrightarrow} Nat^\lor(H,G)$ debemos ver que para toda $f: C \rightarrow C'$ en $\Cat$ el diagrama
	
\vspace{1ex}
	
\xymatrix @C=1.4pc { & FC \otimes (GC)^\wedge \ar[rr]^{\theta_C \otimes (GC)^\wedge} & & HC \otimes (GC)^\wedge \ar[dr]^{\lambda_C} \\
					FC \otimes (GC')^\wedge \ar[ur]^{FC \otimes G(f)^\wedge} \ar[dr]_{F(f) \otimes (GC')^\wedge \ \ \ } & & & & Nat^\lor(H,G) \\
					  & FC' \otimes (GC')^\wedge \ar[rr]^{\theta_{C'} \otimes (GC')^\wedge} & & HC' \otimes (GC')^\wedge \ar[ur]_{\lambda_{C'}} }

\vspace{1ex}
	
	conmuta. Para eso intercalamos en el diagrama $HC \otimes (GC')^\wedge$ y obte- \linebreak nemos

\vspace{1ex}	

\xymatrix @C=0pc { & FC \otimes (GC)^\wedge \ar[rr]^{\theta_C \otimes (GC)^\wedge} & & HC \otimes (GC)^\wedge \ar[dr]^{\lambda_C} \\
					FC \otimes (GC')^\wedge \ar[ur]^{FC \otimes G(f)^\wedge} \ar[dr]_{F(f) \otimes (GC')\wedge \ \ \ } \ar[rr]^{\theta_C \otimes (GC')^\wedge} & & HC \otimes (GC')^\wedge \ar[ur]_{HC \otimes G(f)^\wedge} \ar[dr]^{H(f) \otimes (GC')^\wedge} & & Nat^\lor(H,G) \\
					  & FC' \otimes (GC')^\wedge \ar[rr]^{\theta_{C'} \otimes (GC')^\wedge} & & HC' \otimes (GC')^\wedge \ar[ur]_{\lambda_{C'}} }

\vspace{1ex}

	El rombo de la derecha conmuta por definición de coend, el paralelogramo superior es el producto tensorial de dos paralelogramos trivialmente conmutativos, y el inferior es el cuadrado que expresa la naturalidad de $\theta$ en la flecha $f$ multiplicado a derecha por $(GC')^\wedge$. Dejamos como ejercicio verificar que $Nat^\lor(-,G)$ respeta composiciones e identidades.
	
	Luego, para ver la adjunción, se obtiene la biyección natural entre las flechas $Nat^\lor(F,G) \rightarrow V$ y las flechas $F \Rightarrow V \otimes G$. Por la observación \ref{a}, las flechas $Nat^\lor(F,G) \rightarrow V$ son las flechas $F \otimes_\Cat G^\wedge \rightarrow V$ , que por la propiedad \ref{c} están en biyección natural con las flechas $F \Rightarrow Hom(G^\wedge, V)$, que por \eqref{b} son las flechas $F \Rightarrow V \otimes G$.
\end{proof}

\begin{remark}
	La biyección obtenida, en el caso $V=I$, nos da la biyección entre los conjuntos $[Nat^\lor(F,G),I]$ y $nat[F,G]$ como está expresada en la observación \ref{casonoenrique}.
\end{remark}

\subsection{Coevaluación}

	Así como la counidad de la ley exponencial es la evaluación, la unidad de la adjunción de la propiedad anterior es una transformación natural \linebreak $F \stackrel{\eta}{\Rightarrow} Nat^\lor (F,G) \otimes G$ a la que llamamos coevaluación.
	
	Nos interesa entender un poco quién es $\eta$. Como ya mencionamos anteriormente, en cualquier adjunción $\eta$ es la flecha que corresponde a $id_{Nat^\lor(F,G)}$ en la biyección entre las flechas $Nat^\lor(F,G) \rightarrow Nat^\lor(F,G)$ y \linebreak $F \Rightarrow Nat^\lor(F,G) \otimes G$. Por lo tanto, para obtener una descripción más precisa de $\eta$ debemos perseguir a $id_{Nat^\lor(F,G)}$ a lo largo de todas las biyecciones entre flechas con las que hemos armado esta última adjunción, que son las de la demostración de la propiedad \ref{c} y la igualdad \eqref{b}.

\vspace{1ex}

	\underline{$Nat^\lor(F,G) \stackrel{id}{\rightarrow} Nat^\lor(F,G)$}
		
	\underline{$FC \otimes GC^\wedge \stackrel{\lambda_C}{\rightarrow} Nat^\lor(F,G)$ cumpliendo trivialmente propiedad del coend}
	
	\underline{$FC \stackrel{{\eta}_C}{\rightarrow} Hom(GC^\wedge,Nat^\lor(F,G))$ natural}
	
	\underline{$F \stackrel{\eta}{\Rightarrow} Hom(G^\wedge, Nat^\lor(F,G))$}
	
	$F \stackrel{\eta}{\Rightarrow} Nat^\lor(F,G) \otimes G$
	
\vspace{1ex}

	Aquí observamos entonces que $\eta_C$ es la flecha que corresponde a $\lambda_C$ (la inclusión en el coend) vía la ley exponencial. Ahora, como $GC^\wedge$ tiene a $GC$ como dual a izquierda, tenemos que su hom interno viene dado por esta dualidad, y que la biyección de la ley exponencial se calcula explícitamente con el $\varepsilon$ y $\eta$ de la dualidad como fue descripto en \eqref{leyexpcondual}. Luego obtenemos que ${\eta}_C$ es la flecha 
	
\begin{equation} \label{eta}
	FC \stackrel{FC \otimes \eta}{\rightarrow} FC \otimes GC^\wedge \otimes GC \stackrel{\lambda_C \otimes GC}{\rightarrow} Nat^\lor(F,G) \otimes GC
\end{equation}	

\subsection {$Nat^\lor$ da una co-categoría enriquecida en $\Vat$} \label{sec:cocat}

	Explicitemos ahora qué es tener una co-categoría enriquecida como fue dicho en la definición \ref{cocat}

\begin{remark}
	Sea $\Vat$ una categoría tensorial. Una co-categoría enriquecida en $\Vat$ es tener
\begin{itemize}
	\item una familia $\Cat$ de objetos
	\item para todo par de objetos $C,D$ de $\Cat$ un objeto $Hom^\lor(C,D)$ de $\Vat$
	\item para toda terna de objetos $C,D,E$ de $\Cat$ una flecha en $\Vat$ coasociativa $Hom^\lor(C,E) \stackrel{\triangle}{\rightarrow} Hom^\lor(C,D) \otimes Hom^\lor(D,E)$ 
	\item para todo objeto $C$ una flecha $Hom^\lor(C,C) \rightarrow I$ en $\Vat$ neutro para $\triangle$ a ambos lados.
\end{itemize}
\end{remark}

	El objetivo ahora es ver que la categoría de los funtores $F: \Cat \rightarrow \Vat_0$ obtiene con $Nat^\lor$ una estructura de co-categoría enriquecida en $\Vat$.

	Para eso, dados $F,G,H : \Cat \rightarrow \Vat_0$ funtores queremos construir una flecha $Nat^\lor (F,H) \stackrel{\triangle}{\rightarrow} Nat^\lor(F,G) \otimes Nat^\lor (G,H)$ dual a la composición (en verdad $F$ puede tomar valores en $\Vat$). 
	
	Análogamente a como se puede construir la flecha composición con la ley exponencial "`evaluando dos veces"', construimos la transformación natural
	$$\alpha: F \stackrel{\eta}{\Rightarrow} Nat^\lor (F,G) \otimes G \stackrel{Nat^\lor (F,G) \otimes \eta}{\Longrightarrow} Nat^\lor (F,G) \otimes Nat^\lor(G,H) \otimes H $$
	
	(notar que cada $\eta$ corresponde a una adjunción distinta) que por la biyección entre las flechas de la adjunción de la proposición \ref{adjuncionpredual} (usada en la otra dirección que antes) nos da la flecha $$ Nat^\lor (F,H) \stackrel{\triangle}{\rightarrow} Nat^\lor(F,G) \otimes Nat^\lor (G,H) $$	a la que llamamos cocomposición. 
	
	Para entender quién es $\triangle$, debemos recorrer las biyecciones en el otro sentido. Comenzamos con las flechas $\alpha_C$, que obtenemos de \eqref{eta}:

\vspace{1ex}
	
\xymatrix{FC \ar[r]^>>>>>{FC \otimes \eta} &
					FC \otimes GC^\wedge \otimes GC \ar[r]^>>>>>{\lambda_C \otimes GC} &
					Nat^\lor(F,G) \otimes GC \ar[rrr]^>>>>>>>>>>>>>>>{Nat^\lor(F,G) \otimes GC \otimes \eta} & & & \cdots}

\xymatrix{ \cdots Nat^\lor(F,G) \otimes GC \otimes HC^\wedge \otimes HC \ar@/_/[r]_{Nat^\lor(F,G) \otimes \lambda_C \otimes HC} &
					Nat^\lor(F,G) \otimes Nat^\lor(G,H) \otimes HC }

\vspace{1ex}
					
Y ahora, al mirar la otra $\theta$ de la ley exponencial, vemos que debemos multiplicar esta flecha a derecha por $HC^\wedge$ y componerla luego con $Nat^\lor(F,G) \otimes Nat^\lor(G,H) \otimes \varepsilon$. La forma más sencilla de trabajar con esta flecha es con el cálculo de ascensores, de la siguiente forma:

\ \ \ \ \ \ \ \ \ \ \ \ \ \ \ \ \ \ \ \ \xymatrix@C=-2pc{ FC \ar@2{-}[d] & & & \ar@{-}[rd] \ar@{-}[ld] \ \ \ \ \ \ar@{}[d]|{\eta} \ \ \ \ \ & & & & & & & HC^\wedge \ar@2{-}[d] & \\
								 FC \ar@{-}[rd] & \ar@{}[d]|{\lambda_C} & GC^\wedge \ \ \ \ar@{-}[ld] & & GC \ar@2{-}[d] & & & & & & HC^\wedge \ar@2{-}[d] & \\	
								 & Nat^\lor(F,G) \ar@2{-}[d] & & & GC \ar@2{-}[d] & & & \ar@{-}[rd] \ar@{-}[ld] \ \ \ \ \ \ar@{}[d]|{\eta} \ \ \ \ \ & & & HC^\wedge \ar@2{-}[d] & \\							 
								 & Nat^\lor(F,G) \ar@2{-}[d] & & & GC \ar@{-}[rd] & \ar@{}[d]|{\lambda_C} & HC^\wedge \ar@{-}[ld] & & HC \ar@2{-}[d] & & HC^\wedge \ar@2{-}[d] & \\				
								 & Nat^\lor(F,G) \ar@2{-}[d] & & & & Nat^\lor(G,H) \ar@2{-}[d] & & & HC \ar@{-}[rd] & \ \ \ \ \ \ar@{}[d]|{\varepsilon} \ \ \ \ \ & HC^\wedge \ar@{-}[ld] & \\							 
								 & Nat^\lor(F,G) & & & & Nat^\lor(G,H) & & & & & & }

Moviendo ahora flechas hacia arriba y hacia abajo obtenemos

\xymatrix@C=-2.5pc{ FC \ar@2{-}[d] & & & \ar@{-}[rd] \ar@{-}[ld]  \ \ \ \ \ \ \ \ \ar@{}[d]|{\eta} \ \ \ \ \ \ \ \ & & & & \ar@{-}[rd] \ar@{-}[ld] \ \ \ \ \ \ \ \ \ar@{}[d]|{\eta} \ \ \ \ \ \ \ \ & & & HC^\wedge \ar@2{-}[d] & & 				FC \ar@2{-}[d] & & & \ar@{-}[rd] \ar@{-}[ld] \ \ \ \ \ \ \ \ \ar@{}[d]|{\eta} \ \ \ \ \ \ \ \ & & & HC^\wedge \ar@2{-}[d]	\\	
								 FC \ar@2{-}[d] & & GC^\wedge \ar@2{-}[d] & & GC \ar@2{-}[d] & & HC^\wedge \ar@2{-}[d] & & HC \ar@{-}[rd] & \ \ \ \ \ \ \ \ \ar@{}[d]|{\varepsilon} \ \ \ \ \ \ \ \ & HC^\wedge \ar@{-}[ld] & \ \ \ \ \ \ \ \ \ \ = \ \ \ \ \ \ \ \ \ \ &					FC \ar@{-}[rd] & \ar@{}[d]|{\lambda_C} & GC^\wedge \ar@{-}[ld] & & GC  \ar@{-}[rd] & \ar@{}[d]|{\lambda_C} & HC^\wedge \ar@{-}[ld]	\\							 			 FC \ar@{-}[rd] & \ar@{}[d]|{\lambda_C} & GC^\wedge \ar@{-}[ld] & & GC \ar@{-}[rd] & \ar@{}[d]|{\lambda_C} & HC^\wedge \ar@{-}[ld] & & & & & & 				& Nat^\lor(F,G) & & & & Nat^\lor(G,H) 	\\						 
								 & Nat^\lor(F,G)					& 											& & & Nat^\lor(G,H) & & & & & & }

\vspace{1ex}

La igualdad es válida por la primer igualdad triangular de la dualidad $HC \dashv HC^\wedge$. Luego, obtenemos que las flechas $\triangle_C$ que inducen $\triangle$ son

\vspace{1ex}

\xymatrix{FC \otimes HC^\wedge \ar@/^/[r]^>>>{FC \otimes \eta \otimes HC^\wedge} & FC \otimes GC^\wedge \otimes GC \otimes HC^\wedge \ar[r]^{\lambda_C \otimes \lambda_C} & Nat^\lor(F,G) \otimes Nat^\lor(G,H) }

\vspace{1ex}
						
La siguiente propiedad expresa la coasociatividad de la cocomposición.

\begin{\prop}
Para todos $F,G,H,I: \Cat \rightarrow \Vat_0$, el siguiente diagrama conmuta

\xymatrix{ Nat^\lor(F,G) \otimes Nat^\lor(G,I) \otimes Nat^\lor(I,H) & Nat^\lor(F,I) \otimes Nat^\lor(I,H) \ar[l]_<<<<{\triangle \otimes 1} \\
					 Nat^\lor(F,G) \otimes Nat^\lor(G,H) \ar[u]^{1 \otimes \triangle} & Nat^\lor(F,H) \ar[l]^{\triangle} \ar[u]_{\triangle} }
\end{\prop}

\begin {proof}
Para probarla ponemos dentro de este cuadrado otro cuadri-\\látero de la siguiente forma

\tiny

\xymatrix @C=-1.7pc { Nat^\lor(F,G) \otimes Nat^\lor(G,I) \otimes Nat^\lor(I,H) & & & Nat^\lor(F,I) \otimes Nat^\lor(I,H) \ar[lll]_{\triangle \otimes 1} \\
						& FC \otimes GC^\wedge \otimes GC \otimes IC^\wedge \otimes IC \otimes HC^\wedge \ar[ul]^{\lambda_C \otimes \lambda_C \otimes \lambda_C}  \\
							& &	FC \otimes IC^\wedge \otimes IC \otimes HC^\wedge \ar[ul]_{FC \otimes \eta \otimes IC^\wedge \otimes IC \otimes HC^\wedge} \ar[uur]^{\lambda_C \otimes \lambda_C} 	\\
						& FC \otimes GC^\wedge \otimes GC \otimes HC^\wedge \ar[uu]^{FC \otimes GC^\wedge \otimes GC \otimes \eta \otimes HC^\wedge} \ar[dl]^{\lambda_C \otimes \lambda_C} & FC \otimes HC^\wedge \ar[u]^{FC \otimes \eta \otimes HC^\wedge} \ar[l]^<<<<{FC \otimes \eta \otimes HC^\wedge} \ar[dr]^{\lambda_C} \\ 
					 Nat^\lor(F,G) \otimes Nat^\lor(G,H) \ar[uuuu]^{1 \otimes \triangle} & & & Nat^\lor(F,H) \ar[lll]^{\triangle} \ar[uuuu]_{\triangle} }

\normalsize

Luego los cuatro cuadriláteros exteriores conmutan por la descripción que hicimos de $\triangle$, y el cuadrilátero interior también lo hace por la funtorialidad del producto tensorial.
\end{proof}

Buscamos ahora una counidad $Nat^\lor(F,F) \stackrel{\varepsilon}{\rightarrow} I$, la obtenemos de la siguiente forma: tenemos las flechas $FC \otimes FC^\wedge \stackrel{\varepsilon}{\rightarrow} I$ correspondientes a la dualidad de $FC$. Para obtener a partir de ellas una flecha $Nat^\lor(F,F) \stackrel{\varepsilon}{\rightarrow} I$, debemos verificar que para toda flecha $c: C \rightarrow C'$ los cuadrados

\ \ \ \ \ \ \ \ \ \ \ \ \xymatrix@C=3pc { FC \otimes FC'^\wedge \ar[r]^{FC \otimes F(c)^\wedge} \ar[d]_{F(c) \otimes FC'^\wedge} & FC \otimes FC^\wedge \ar[d]^{\varepsilon} \\
							FC' \otimes FC'^\wedge \ar[r]^{\varepsilon} & I }
							
	conmutan. Esto ocurre por la propiedad \ref{wedgeswitch}.
	
	Ver que $\varepsilon$ es neutro para $\triangle$ es verificar que el siguiente triángulo conmuta (el otro es análogo).
	
\ \ \ \ \ \ \ \ \ \ \ \ \xymatrix {  Nat^\lor(G,F) \otimes Nat^\lor(F,F) \ar[rrr]^{Nat^\lor(G,F) \otimes \varepsilon} & & & Nat^\lor(F,F) \otimes I \\
							Nat^\lor(G,F) \ar[u]^{\triangle} \ar[urrr]_{ \cong } }

\vspace{1ex}
	
	Similarmente a como venimos haciendo, metemos en el triángulo la flecha $GC \otimes \eta \otimes FC^\wedge$ y obtenemos
	
 \xymatrix@C=1pc {  Nat^\lor(G,F) \otimes Nat^\lor(F,F) \ar[rr]^{Nat^\lor(G,F) \otimes \varepsilon} & & Nat^\lor(G,F) \otimes I \\
							& GC \otimes FC^\wedge \otimes FC \otimes FC^\wedge \ar[ul]_{\lambda_C \otimes \lambda_C} \ar[ur]^{\lambda_C \otimes \varepsilon} \\
							& GC \otimes FC^\wedge \ar[u]^{GC \otimes \eta \otimes FC^\wedge}	\ar[dl]^{\lambda_C} \ar@/_2pc/[uur]^{\cong \circ \lambda_C} \\
							Nat^\lor(G,F) \ar[uuu]^{\triangle} \ar@/_6pc/[uuurr]_{\cong} }

\begin{verbatim}

\end{verbatim}

donde el trapecio de la izquierda conmuta igual que antes, los "`triángulos"' pequeños de arriba y abajo conmutan trivialmente y el de la derecha lo hace pues la composición $(\lambda_C \otimes \varepsilon) \circ (GC \otimes \eta \otimes FC^\wedge)$ es, bajando la flecha $\lambda_C$
	
	\ \ \ \ \ \ \ \ \ \ \ \xymatrix@C=-2.3pc{ 
GC \ar@2{-}[d] & 											& 											& \ar@{-}[ld] \ar@{-}[rd]  \ \ \ \ \  \  \ar@{}[d]|{\eta}    \ \ \ \ \ \ &				 &												& FC^\wedge \ar@2{-}[d]\\
GC \ar@2{-}[d] & 											& FC^\wedge \ar@2{-}[d] & 																				& FC \ar@{-}[rd] &    \ \ \ \ \ \ \ar@{}[d]|{\varepsilon}    \ \ \ \ \ \ & FC^\wedge \ar@{-}[ld]		& \ \ \ \ \ \ \ \ \ \ \ar@{}[d]|{=} \ \ \ \ \ \ \ \ \ \ &  GC \ar@{-}[rd] & \ar@{}[d]|{\lambda_C}& FC^\wedge \ar@{-}[ld] \\	
GC \ar@{-}[rd] & \ar@{}[d]|{\lambda_C}& FC^\wedge \ar@{-}[ld]	& & & &    & & 	& Nat^\lor(G,F)				\\
							   & Nat^\lor(G,F)				& 											& 																				& 							 & 											  &								}

\vspace{2ex}
									  
La igualdad es válida por la segunda igualdad triangular de la dualidad $FC \dashv FC^\wedge$.

\pagebreak	
\section {End Predual} \label{sec:EndPredual}

\subsection{$End^\lor(F)$ es una coálgebra}

	Sean $\Cat$ una categoría, y $F : \Cat \rightarrow \Vat_0$ un funtor.  Notemos $$End^\lor (F) = Nat^\lor(F,F)$$
	
		Así como en una categoría $\Cat$ (o en una categoría enriquecida en $\Vat$) los conjuntos $[C,C]$ (o los objetos $Hom(C,C)$) obtienen una estructura de monoide (respecto del producto tensorial de $\Vat$) con la composición, la cocomposición en el caso $F=G=H$ nos da una flecha \linebreak $End^\lor (F) \stackrel{\triangle}{\rightarrow} End^\lor (F) \otimes End^\lor (F)$ que enriquece a $End^\lor(F)$ con una estructura de coálgebra en $\Vat$ cuya counidad es $End^\lor (F) \stackrel{\varepsilon}{\rightarrow} I$.
	
Antes de seguir avanzando, ya podemos construir un morfismo co- \linebreak rrespondiente al del enunciado 1 de Galois.

\begin{\prop} \label{morfdecomp}
	Sea $C$ una coálgebra en $\Vat$ y $U: Comod_0C \rightarrow \Vat_0$ el funtor de olvido, donde $Comod_0C$ es la categoría de los $C$-comódulos que están en $\Vat_0$. Entonces tenemos un morfismo de coálgebras $\tilde{\rho}: End^\lor (U) \rightarrow C$
\end{\prop}

\begin{remark} \label{obs1}
	Más adelante veremos que en el caso $\Vat = Vec_K$, $\Vat_0 = Vec_K^{<\infty}$, el morfismo resulta un isomorfismo de $K$-coálgebras.
\end{remark}

\begin{proof}
	Cada $C$-comódulo $M$ tiene un coproducto escalar \linebreak $\rho_M : M \rightarrow M \otimes C$. Juntándolos obtenemos una transformación natural $\rho: U \Rightarrow U \otimes C$, interpretando a $C$ como el funtor constante $C \in \Vat_0$ (la naturalidad de $\rho$ para una $f: M \rightarrow M'$ coincide con la condición pedida para que $f$ sea morfismo de $C$-comódulos). 
	
	Vía la biyección obtenida para demostrar la propiedad \ref{adjuncionpredual}, obtenemos una flecha $\tilde{\rho}: End^\lor(U) \rightarrow C$ en $\Vat$. Debemos verificar que $\tilde{\rho}$ es un morfismo de coálgebras.
	
	Para entender qué hace $\tilde{\rho}$, debemos seguir a la flecha $\rho$ a través de las distintas biyecciones naturales entre flechas:
	
	\xymatrix @R=0pc { & U \ar@2[r]^{\rho} & U \otimes C \ar@2{~}[dd] \\
						 \ar@{-}[rrr] & & & {\eqref{b}} \\ 
						  			 & U \ar@2[r]^{\rho} & Hom(U^\lor,C) \\
						 \ar@{-}[rrr] & & & \hbox{Propiedad \ref{c} (ley exponencial)} \\ 			 
						  			 & U \otimes_\Cat U^\lor \ar[r]^{\tilde{\rho}} \ar@2{-}[dd] & C \\
						 \ar@{-}[rrr] & & & \hbox{obs \ref{a}} \\						  			 
						 				 & End^\lor(U) \ar[r]^{\tilde{\rho}} & C  }
	
	Luego, recordando cómo es la ley exponencial cuando los hom internos vienen dados por una dualidad, lo cual fue expuesto en \eqref{leyexpcondual} (en este caso tenemos que $M$ es dual a izquierda de $M^\wedge$), tenemos el diagrama

\begin{equation} \label{defderho}
	\xymatrix { & M \otimes M^\wedge \ar[dl]_{\lambda_M} \ar[r]^{\rho_M \otimes M^\wedge} & C \otimes M \otimes M^\wedge \ar[dr]^{C \otimes \varepsilon} \\
						End^\lor(U) \ar[rrr]^{\tilde{\rho}} & & & C }
\end{equation}
						
	Podemos construir entonces el diagrama
	
\xymatrix@C=0.5pc	{ C \otimes C & & & C \ar[lll]_{\triangle} \\
						& C \otimes M \otimes M^\wedge \otimes C \otimes M \otimes M^\wedge \ar[ul]^{C \otimes \varepsilon \otimes C \otimes \varepsilon} & C \otimes M \otimes M^\wedge\ar[ur]^{C \otimes \varepsilon} \\
						& M \otimes M^\wedge \otimes M \otimes M^\wedge \ar[u]^{\rho_M \otimes M^\wedge \otimes \rho_M \otimes M^\wedge} \ar[dl]^{\lambda_M \otimes \lambda_M} & M \otimes M^\wedge \ar[l]^>>>>>>>{M \otimes \eta \otimes M^\wedge} \ar[u]^{\rho_M \otimes M^\wedge} \ar[dr]^{\lambda_M} \\
						End^\lor(U) \otimes End^\lor(U) \ar[uuu]^{\tilde{\rho} \otimes \tilde{\rho}} & & & End^\lor(U) \ar[lll]_{\triangle} \ar[uuu]_{\tilde{\rho}} }
						
	del que nos resta verificar que la parte superior es conmutativa (la parte inferior conmuta por la descripción que hicimos para $\triangle$). En efecto, la composición que va hacia la izquierda y luego sube se corresponde con el diagrama
	
\ \ \ \ \ \ \xymatrix@C=0pc { & & M \ar@2{-}[d] & & & & \ar@{-}[dll] \ar@{}[d]|{\eta} \ar@{-}[drr] & & & & M^\wedge \ar@2{-}[d] & & 
											& & M \ar@{-}[d] \ar@{}[ld]|>>>>>>{\ \rho_M } \ar@{-}[dll] \\
									& & M \ar@{-}[d] \ar@{}[ld]|>>>>>>{\ \rho_M } \ar@{-}[dll] & & M^\wedge \ar@2{-}[d] & & & & M \ar@{-}[d] \ar@{}[ld]|>>>>>>{\ \rho_M } \ar@{-}[dll] & & M^\wedge \ar@2{-}[d] & & 										C \ar@2{-}[d] & & M \ar@2{-}[d] & & & \ar@{-}[dl] \ar@{}[d]|{\eta} \ar@{-}[dr] & & & M^\wedge \ar@2{-}[d] \\
									C \ar@2{-}[d] & & M \ar@{-}[dr] & \ar@{}[d]|{\varepsilon} & M^\wedge \ar@{-}[dl] & & C \ar@2{-}[d] & & M \ar@{-}[dr] & \ar@{}[d]|{\varepsilon} & M^\wedge \ar@{-}[dl] & \ \ \ \ = \ \ \ \ & 			C \ar@2{-}[d] & & M \ar@{-}[dr] & \ar@{}[d]|{\varepsilon} & M^\wedge \ar@{-}[dl] & & M \ar@2{-}[d] & & M^\wedge \ar@2{-}[d] \\
									C & & & & & & C & & & &  		& & 
											C \ar@2{-}[d] & & & & & & M \ar@{-}[d] \ar@{}[ld]|>>>>>>{\ \rho_M } \ar@{-}[dll] & & M^\wedge \ar@2{-}[d] \\
& & & & & & & & & &  		& & 				C \ar@2{-}[d] & & & & C \ar@2{-}[d] & & M \ar@{-}[dr] & \ar@{}[d]|{\varepsilon} & M^\wedge \ar@{-}[dl] \\
& & & & & & & & & &  		& & 				C & & & & C & & &  & }

\vspace{1ex}
			
	donde la igualdad se obtiene subiendo y bajando flechas. Luego, por la segunda igualdad triangular de la dualidad $M \dashv M^\wedge$, el diagrama queda
	
\ \ \ \ \ \ \ \ \ \ \ \ \xymatrix@C=0pc { & & & & M \ar@{-}[d] \ar@{}[dl]|{\rho_M} \ar@{-}[dllll] & & M^\wedge \ar@2{-}[d] & & 			& & & & M \ar@{-}[d] \ar@{}[dl]|{\rho_M} \ar@{-}[dlll] & & M^\wedge \ar@2{-}[d] \\	
									C \ar@2{-}[d] & & & & M \ar@{-}[d] \ar@{}[ld]|>>>>>>{\ \rho_M } \ar@{-}[dll] & & M^\wedge \ar@2{-}[d] & \ \ \ \ \ar@{}[d]|{=} \ \ \ \ & 		& C \ar@{-}[dr] \ar@{}[d]|{\triangle} \ar@{-}[dl] & & & M \ar@2{-}[d] & & M^\wedge \ar@2{-}[d] \\
									C \ar@2{-}[d] & & C \ar@2{-}[d] & &  M \ar@{-}[dr] & \ar@{}[d]|{\varepsilon} & M^\wedge \ar@{-}[dl] & & 			C \ar@2{-}[d] & & C \ar@2{-}[d] & &  M \ar@{-}[dr] & \ar@{}[d]|{\varepsilon} & M^\wedge \ar@{-}[dl]	\\
									C & & C & & & &  		& & C & & C & & & & & & 		}

\vspace{1ex}
									
	donde la igualdad es válida pues $\rho_M$ es morfismo de $C$-comódulos. Este último diagrama se corresponde con la otra composición.
\end{proof}

\subsection{El levantamiento de $F$}
\label{sec:levdeF}	
	
	La coevaluación en este caso $F=G=H$ es una flecha $F \stackrel{\eta}{\Rightarrow} End^\lor (F) \otimes F$ que, como veremos a continuación, para cada objeto $C \in \Cat$, le da a $FC$ una estructura de $End^\lor(F)$-comódulo a izquierda. Verifiquemos que $\eta_C$ respeta la comultiplicación $\triangle$ de $End^\lor(F)$. Para ello debemos ver que el siguiente diagrama, que ya hemos completado en su interior con la descripción de cada una de las flechas, es conmutativo.
	
\small	
\xymatrix@C=-4pc{ End^\lor(F) \otimes 	End^\lor(F) \otimes FC & & & End^\lor(F) \otimes FC \ar[lll]_{End^\lor(F) \otimes \eta_C} \ar[dl]|{End^\lor(F) \otimes FC \otimes \eta_C} \\
						& & End^\lor(F) \otimes FC \otimes FC^\wedge \otimes FC \ar[ull]|{End^\lor(F) \otimes \lambda_C \otimes FC} \\
						& FC \otimes FC^\wedge \otimes FC \otimes FC^\wedge \otimes FC \ar[uul]|{\lambda_C \otimes \lambda_C \otimes FC} \\
						& & FC \otimes FC^\wedge \otimes FC \ar[ul]|{FC \otimes \eta \otimes FC^\wedge \otimes FC} \ar@/_3pc/[uuur]^{\lambda_C \otimes FC} \ar[dll]_{\lambda_C \otimes FC} \\
						End^\lor(F) \otimes FC \ar@/^/[uuuu]^{\triangle \otimes FC} & & & FC \ar[lll]_{\eta_C} \ar@/_3pc/[uuuu]^{\eta_C} \ar[ul]^{FC \otimes \eta} }
\normalsize	

Para ello basta ver que las dos composiciones "`internas"' que comienzan en el $FC$ de abajo a la derecha y llegan al $End^\lor(F) \otimes End^\lor(F) \otimes FC$ de arriba a la izquierda son la misma, y eso se verifica escribiendo estas composiciones con el cálculo de ascensores y subiendo y bajando flechas (lo dejamos como ejercicio).
	
	Notando con $Comod_0(End^\lor (F))$ a la categoría de los $End^\lor(F)$-comódulos de $\Vat_0$, construimos así un "`levantamiento"' de $F$
	
	\ \ \ \ \ \ \ \ \ \ \ \ \ \ \ \ \ \ \ \ \ \ \ \ \xymatrix {  & Comod_0(End^\lor (F)) \ar[d]^{U} \\
							{\Cat} \ar[ur]^{\tilde{F}} \ar[r]^{F} & \Vat_0 }
							
	donde $U$ es el funtor de olvido. La definición en las flechas es \linebreak $\tilde{F}(C \stackrel{f}{\rightarrow} C') = F(f)$, que veremos a continuación que es un morfismo de $End^\lor(F)$-comódulos. Debemos verificar que el cuadrado
	
	\ \ \ \ \ \ \ \ \ \ \ \ \ \ \ \ \ \ \ \ \ \ \ \xymatrix { FC \ar[r]^>>>>>{\eta_C} \ar[d]_{F(f)} & End^\lor(F) \otimes FC \ar[d]^{End^\lor(F) \otimes F(f)} \\
							FC' \ar[r]^>>>>>{\eta_{C'}} & End^\lor(F) \otimes FC' }
							
	conmuta. Por definición de la coevaluación, esto equivale a que conmute el diagrama
	
\ \ \ \ \ \ \ \xymatrix@C=3pc { FC \ar[r]^>>>>>>>>>{FC  \otimes \eta} \ar[d]_{F(f)} & FC \otimes FC^\wedge \otimes FC \ar[r]^{\lambda_C \otimes FC} & End^\lor(F) \otimes FC \ar[d]^{End^\lor(F) \otimes F(f)} \\
							FC' \ar[r]^>>>>>>>>>{FC' \otimes \eta} & FC' \otimes (FC')^\wedge \otimes FC' \ar[r]^>>>>>>>{\lambda_{C'} \otimes FC'} & End^\lor(F) \otimes FC' }
	
	En forma gráfica, queremos ver que 
	
	\begin{equation} \label{qvq}
	\ \ \ \ \ \ \ \ \ \ \ \ \ \ \ \xymatrix@C=-1.5pc { FC \ar@2{-}[d] & & & \ \ \ \ \ \ar@{-}[dr] \ar@{-}[dl] \ar@{}[d]|{\eta} \ \ \ \ \ &									&&	
	FC \ar@<7pt>@{-}'+<0pt,-6pt>[d] \ar@<-7pt>@{-}'+<0pt,-6pt>[d]^{F(f)} \\
											 FC \ar@{-}[dr] & \ar@{}[d]|{\lambda_C} & FC^\wedge \ar@{-}[dl] & & FC \ar@2{-}[d]										& \ \ \ \ \ \ \ \ = \ \ \ \ \ \ \ \ &		
	FC' \ar@2{-}[d] & & & \ar@{-}[dr] \ar@{-}[dl] \ \ \ \ \ \ar@{}[d]|{\eta} \ \ \ \ \ \\
											 & End^\lor(F) \ar@2{-}[d] & & & FC \ar@<7pt>@{-}'+<0pt,-6pt>[d] \ar@<-7pt>@{-}'+<0pt,-6pt>[d]^{F(f)}	&&	
	FC' \ar@{-}[dr] & \ar@{}[d]|{\lambda_{C'}} & (FC')^\wedge \ar@{-}[dl] & & FC' \ar@2{-}[d] \\
											 & End^\lor(F) & & & FC'																																							&&	
	& End^\lor(F) & & & FC' \\}
	\end{equation}
	
	Para ello usaremos la conmutatividad del siguiente rombo que es válida por la definición de coend
	
	\ \ \ \ \ \ \ \ \ \ \ \ \ \ \ \xymatrix{ & FC \otimes FC^\wedge \ar[dr]_{\lambda_C} \\
						FC \otimes (FC')^\wedge \ar[ru]_{FC \otimes F(f)^\wedge} \ar[rd]^{F(f) \otimes (FC')^\wedge} & & End^\lor(F) \\
						 & FC' \otimes (FC')^\wedge \ar[ru]^{\lambda_{C'}} }
	
	En forma gráfica, recordando la definición del dual de una flecha de la sección \ref{subsec:DualidadGlobal}, el rombo nos queda
	
	\ \ \ \ \ \ \ \ \ \ \ \ \ \ \xymatrix@C=-1.5pc{ FC \ar@<7pt>@{-}'+<0pt,-6pt>[d] \ar@<-7pt>@{-}'+<0pt,-6pt>[d]^{F(f)} & & (FC')^\wedge \ar@2{-}[d] 	&& 
	FC \ar@2{-}[d] & & & \ar@{-}[dr] \ar@{-}[dl] \ \ \ \ \ \ar@{}[d]|{\eta} \ \ \ \ \ & & & (FC')^\wedge \ar@2{-}[d] \\
											FC' \ar@{-}[dr] & \ar@{}[d]|{\lambda_{C'}} & (FC')^\wedge \ar@{-}[dl]																&& 
	FC \ar@2{-}[d] & & FC^\wedge \ar@2{-}[d] & & FC \ar@<7pt>@{-}'+<0pt,-6pt>[d] \ar@<-7pt>@{-}'+<0pt,-6pt>[d]^{F(f)} & & (FC')^\wedge \ar@2{-}[d] \\
											& End^\lor(F) & 																																										& \ \ \ \ \ \ \ \ = \ \ \ \ \ \ \ \  & 
	FC \ar@2{-}[d] & & FC^\wedge \ar@2{-}[d] & & FC' \ar@{-}[dr] & \ \ \ \ \ \ar@{}[d]|{\varepsilon} \ \ \ \ \ & (FC')^\wedge \ar@{-}[dl] \\
	&&&& FC \ar@{-}[dr] & \ar@{}[d]|{\lambda_C} & FC^\wedge \ar@{-}[dl] & & & &  \\		
	&&&& & End^\lor(F)  }
								
	Luego, usando esta igualdad en el diagrama de la derecha de \eqref{qvq} (previa subida de $\eta$ y bajada de $F(f)$), este queda igual a
	
	\ \ \ \ \ \ \ \ \ \ \ \ \ \ \ \ \ \ \ \ \ \ \xymatrix@C=-1.5pc{ & & & & & & & \ar@{-}[dr] \ar@{-}[dl] \ \ \ \ \ \ar@{}[d]|{\eta} \ \ \ \ \ & \\
											FC \ar@2{-}[d] & & & \ar@{-}[dr] \ar@{-}[dl] \ \ \ \ \ \ar@{}[d]|{\eta} \ \ \ \ \ & & & (FC')^\wedge \ar@2{-}[d] & & FC' \ar@2{-}[d] \\
											FC \ar@2{-}[d] & & FC^\wedge \ar@2{-}[d] & & FC \ar@<7pt>@{-}'+<0pt,-6pt>[d] \ar@<-7pt>@{-}'+<0pt,-6pt>[d]^{F(f)} & & (FC')^\wedge \ar@2{-}[d] & & FC' \ar@2{-}[d] \\
											FC \ar@2{-}[d] & & FC^\wedge \ar@2{-}[d] & & FC' \ar@{-}[dr] & \ \ \ \ \ \ar@{}[d]|{\varepsilon} \ \ \ \ \ & (FC')^\wedge \ar@{-}[dl] & & FC' \ar@2{-}[d] \\
											FC \ar@{-}[dr] & \ar@{}[d]|{\lambda_C} & FC^\wedge \ar@{-}[dl] & & & & & & FC' \ar@2{-}[d] \\
											& End^\lor(F) & & & & & & & FC' }
	
	que, juntando la $\eta$ y $\varepsilon$ de la dualidad $FC' \dashv (FC')^\wedge$ abajo a la derecha y usando la segunda igualdad triangular de esta dualidad, queda si subimos a $\lambda_C$ y bajamos a $F(f)$ como el diagrama de la izquierda de \eqref{qvq}.	

\begin{remark} \label{obs2}
Más adelante veremos que en el caso $\Vat = Vec_K$, $\Vat_0 = Vec_K^{<\infty}$, si $\Cat$ es abeliana y enriquecida en $Vec_K$, y $F$ es enriquecido, exacto y fiel, entonces $\tilde{F}$ es una equivalencia de categorías. 
\end{remark}

	Enunciamos ahora nuestros siguientes pasos:

\begin{enumerate}
	\item Si $\Cat$ y $F$ son tensoriales, y $\Vat$ es simétrica entonces $End^{\lor}(F)$ es una biálgebra en $\Vat$.
	\item En ese caso, si todo objeto de $\Cat$ tiene un dual a derecha entonces $End^{\lor}(F)$ es una álgebra de Hopf en $\Vat$.
\end{enumerate}

\subsection{$End^\lor$ es un funtor tensorial}

	A partir de esta sección $\Vat$ será simétrica.

	Lo primero que haremos es darle a la construcción $End^\lor$ estructura funtorial. Para ello necesitamos la categoría $Cat / \Vat_0$ de categorías sobre $\Vat_0$. Recordamos que sus objetos son los funtores $\Cat \stackrel{F}{\rightarrow} \Vat_0$ y una flecha $(f,\alpha)$ entre dos objetos $\Cat \stackrel{F}{\rightarrow} \Vat_0$  y $\Dat \stackrel{G}{\rightarrow} \Vat_0$ es un funtor $\Cat \stackrel{f}{\rightarrow} \Dat$ tal que el triángulo 
	\xymatrix @C=0.5pc { {\Cat} \ar[rr]^{f} \ar[dr]_{F} & & {\Dat} \ar[dl]^{G} \\
								& {\Vat_0} }
	conmuta salvo isomorfismo, es decir existe un isomorfismo na- \linebreak tural $\alpha : F \Rightarrow G \circ f$.
	
	Notemos también que $Cat / \Vat_0$ es una categoría tensorial definiendo \linebreak $\Cat \times \Dat \stackrel{F \otimes G}{\longrightarrow} \Vat_0$ según $(F \otimes G) (C,D) = F(C) \otimes G(D)$.
	
\begin {\prop}
	$End^\lor : Cat / \Vat_0 \rightarrow Coalg_\Vat$ es un funtor tensorial.
\end {\prop}

\begin{proof}
	Debemos asignarle a una flecha $(f,\alpha)$ entre dos objetos \linebreak $\Cat \stackrel{F}{\rightarrow} \Vat_0$  y $\Dat \stackrel{G}{\rightarrow} \Vat_0$, un morfismo de coálgebras $$End^\lor(f): End^\lor(F) \rightarrow End^\lor(G)$$
	
	Tenemos las flechas $FC \otimes FC^\wedge \stackrel{\alpha_C \otimes {(\alpha^{-1})_C}^\wedge}{\longrightarrow} Gf(C) \otimes Gf(C)^\wedge \rightarrow End^\lor(G)$. Para que induzcan una flecha $End^\lor(F) \rightarrow End^\lor(G)$ tenemos que ver que el diagrama
	
	\xymatrix @C=0.7pc { & FC \otimes FC^\wedge \ar[rrrr]^<(.25){\alpha_C \otimes {(\alpha^{-1})_C}^\wedge} & & & & Gf(C) \otimes Gf(C)^\wedge \ar[dr]^{\lambda_{f(C)}} \\
						 FC \otimes FC'^\wedge \ar[ur]^{FC \otimes F^\wedge(g)} \ar[dr]_{F(h) \otimes FC'^\wedge} & & & & & & \int^\Dat GD \otimes GD^\wedge \\
						  & FC' \otimes FC'^\wedge \ar[rrrr]^<(.25){\alpha_{C'} \otimes {(\alpha^{-1})_{C'}}^\wedge} & & & & Gf(C') \otimes Gf(C')^\wedge \ar[ur]_{\lambda_{f(C')}} \\
	}
	
	conmuta. Para ello agregamos $Gf(C') \otimes Gf(C)^\wedge$ en el medio

\footnotesize	
	\xymatrix @C=0.1pc { & FC \otimes FC^\wedge \ar[rrrr]^{\alpha_C \otimes {(\alpha^{-1})_C}^\wedge} & & & & Gf(C) \otimes Gf(C)^\wedge \ar[dr]^{\lambda_{f(C)}} \\
						FC' \otimes FC^\wedge \ar[ur]^{FC \otimes F^\wedge(g)} \ar[dr]_{F(h) \otimes FC'^\wedge} \ar[rrr]^{\alpha_C \otimes {(\alpha^{-1})_{C'}}^\wedge} & & & Gf(C') \otimes Gf(C)^\wedge \ar[urr]_{\ \ \ Gf(C) \otimes (Gf)^\wedge(h)} \ar[drr]^{\ \ \ Gf(h) \otimes Gf(C)^\wedge} & & & \int^\Dat GD \otimes GD^\wedge \\
						  & FC' \otimes FC'^\wedge \ar[rrrr]^{\alpha_{C'} \otimes {(\alpha^{-1})_{C'}}^\wedge} & & & & Gf(C') \otimes Gf(C')^\wedge \ar[ur]_{\lambda_{f(C')}} \\
	}
\normalsize

	y la conmutatividad de los paralelogramos de arriba y abajo se obtienen por la naturalidad de $\alpha^{-1}$ 
	 y $\alpha$ respectivamente, mientras que el rombo de la derecha es por definición del coend.
	
	La tensorialidad se obtiene de la propiedad \ref{dobleproducto} tomando \linebreak $(A,B,X,Y)= (F,F^{\wedge},G,G^{\wedge})$, y es aquí donde se usa la simetría de $\Vat$.

\end{proof}

\subsection{$C$ y $F$ tensoriales nos dan $m$ y $u$} \label{sec:myu}

	Observemos ahora que el hecho de que $\Cat$ sea una categoría tensorial \linebreak (estricta) y que $F$ sea un funtor tensorial (no necesariamente estricto) equi- vale a tener un monoide $\Cat \stackrel{F}{\rightarrow} \Vat_0$ en la categoría $Cat / \Vat_0$.
	
		En efecto, si $\otimes_\Cat: \Cat \times \Cat \rightarrow \Cat$ es el producto tensorial con neutro $I_\Cat \in \Cat$, $s: F \otimes F \Rightarrow F \circ \otimes_\Cat$ es el isomorfismo natural que dice que $F$ respeta a $\otimes$, y $f : I \stackrel{\cong}{\rightarrow} F(I_\Cat)$, obtenemos la multiplicación
	
	\ \ \ \ \ \ \ \ \ \ \ \ \xymatrix {{\Cat} \ar[dr]_{F} & & {\Cat \times \Cat} \ar[ll]_{\otimes_\Cat} \ar[dl]^{F \otimes F} \\
								& \Vat_0 }
	
	donde el triángulo conmuta salvo el isomorfismo $s$, y la unidad
	
	\ \ \ \ \ \ \ \ \ \ \ \ \xymatrix { {\{*\}} \ar[rr]^{I_\Cat} \ar[dr]_{I} & & {\Cat} \ar[dl]^{F} \\
								& \Vat_0 }
								
	donde el triángulo conmuta salvo el isomorfismo $f$.
	
	La asociatividad y unidad del producto tensorial se corresponden con las de la multiplicación.
	
	Luego, como un funtor tensorial manda monoides en monoides pues \linebreak respeta las composiciones, las identidades y al producto tensorial, tenemos que si $\Cat$ es una categoría tensorial (estricta) y $F$ es un funtor tensorial entonces $End^{\lor}(F)$ obtiene una estructura de biálgebra en $\Vat$ (pues resulta un monoide en la categoría de las coálgebras).
	
	Nos interesa entender un poco mejor qué hacen la multiplicación $m$ y la unidad $u$ de $End^{\lor}(F)$.
	
	$m$ viene dada por aplicar el funtor $End^\lor$ a la flecha $({\otimes}_\Cat,s)$, luego está inducida por las flechas (volvemos a llamar $\otimes$ a todos los productos tensoriales)
	
	\small
	$FC \otimes FD \otimes (FC \otimes FD)^\wedge \stackrel{s_{C,D} \otimes (s_{C,D}^{-1})^\wedge}{\longrightarrow} F(C \otimes D) \otimes (F(C \otimes D))^\wedge \stackrel{\lambda_{C \otimes D}}{\rightarrow} End^\lor(F)$.
	\normalsize
	
	Vía el isomorfismo entre $End^\lor(F) \otimes End^\lor(F)$ y $End^\lor(F \otimes F)$ inducido por las propiedades \ref{dobleproducto} y \ref{dualXporY} obtenemos el diagrama
	
	\small
		\xymatrix @C=0.1pc {End^\lor(F) \otimes End^\lor(F) \ar[r]^{\cong}  \ar@/^2pc/[rr]^{m} & End^\lor(F \otimes F) \ar[r] & End^\lor(F) \\
						FC \otimes FC^\wedge \otimes FD \otimes FD^\wedge \ar[u]^{\lambda_C \otimes \lambda_D} \ar[rd]^{FC \otimes \psi_{FC^\wedge,FD \otimes FD^\wedge} } &  & F(C \otimes D) \otimes (F(C \otimes D))^\wedge \ar[u]^{\lambda_{C \otimes D}} \\
						& FC \otimes FD \otimes (FC \otimes FD)^\wedge \ar[ru]^{s_{C,D} \otimes (s_{C,D}^{-1})^\wedge} \ar[uu]^{\lambda_{C,D}}}
	\normalsize
	
	Llamamos $m_{C,D}$ a la flecha 
	
	\xymatrix {FC \otimes FC^\wedge \otimes FD \otimes FD^\wedge \ar[rrr]^{FC \otimes \psi_{FC^\wedge,FD \otimes FD^\wedge}} & & &
						FC \otimes FD \otimes (FC \otimes FD)^\wedge  ... }
	\xymatrix {& & & & & ... \ar[rr]^>>>>>>>>>{s_{C,D} \otimes ({s_{C,D}^{-1}})^\wedge} & & F(C \otimes D) \otimes (F(C \otimes D))^\wedge }
	
	En cuanto a $u$, al aplicar el funtor $End^\lor$ a la flecha ($I_\Cat,f$) obtenemos que está inducida por la flecha $I \otimes I^\wedge \stackrel{f \otimes (f^{-1})^\wedge}{\longrightarrow} F(I_\Cat) \otimes F(I_\Cat)^\wedge \stackrel{\lambda_I}{\rightarrow} End^\lor(F)$,	que junto con el isomorfismo $I \cong I \otimes I^\wedge$ nos da el diagrama
	
	\ \ \ \ \ \ \ \ \ \ \ \ \xymatrix @C=4pc { I \ar[d]_{\cong} \ar[r]^{u} &  End^\lor(F) \\
							 I \otimes I^\wedge \ar[r]^{f \otimes (f^{-1})^\wedge} & F(I_\Cat) \otimes F(I_\Cat)^\wedge \ar[u]_{\lambda_I} }

	Para obtener el resultado enunciado en el item 1, es decir que $End^\lor(F)$ es una biálgebra aunque $\Cat$ no sea estricta, debemos reemplazar $\Cat$ por su categoría tensorial estricta equivalente $st(\Cat)$ (ver sección \ref{sub:estrictas}). Para ello, a partir de $\Cat \stackrel{F}{\rightarrow} \Vat_0$ definimos $st(\Cat) \stackrel{st(F)}{\rightarrow} \Vat_0$ vía $st(F)(C_1...C_n) = F \circ s$, y obtenemos así el diagrama 
	
	\ \ \ \ \ \ \ \ \ \ \ \ \xymatrix @C=0.5pt { {\Cat} \ar@/^/[rr]^{i} \ar[dr]_{F} & & {St(\Cat)} \ar[dl]^{st(F)} \ar@/^/[ll]^{s}\\
								& \Vat_0 }
	
	que conmuta salvo los isomorfismos que permiten realizar los productos tensoriales en cualquier orden en $\Vat$. Este diagrama muestra entonces que $\Cat$ y $St(\Cat)$ son equivalentes como categorías sobre $\Vat_0$, es decir son objetos isomorfos en $Cat / \Vat_0$. Luego, las coálgebras $End^\lor(F)$ y $End^\lor(st(F))$ son isomorfas, y $End^\lor(F)$ obtiene la estructura de biálgebra que tiene $End^\lor(st(F))$.
	
\begin{remark}
	La estructura de álgebra de $End^\lor(F)$ es un caso particular de la multiplicación que siempre se tiene en $Nat^\lor(F,G)$ si $F$ y $G$ son funtores monoidales.
\end{remark}

\subsection{Cuándo $End^\lor(F)$ es un álgebra de Hopf}

Veremos ahora el segundo enunciado 

\begin{\prop} \label{EndesHopf}
	Sea $\Cat$ una categoría tensorial en la que todo objeto tiene un dual a derecha y $F: \Cat \rightarrow \Vat_0$ un funtor tensorial. Entonces $End^\lor(F)$ es un álgebra de Hopf en $\Vat$.
\end{\prop}

\begin{proof}
	
	Para todo objeto $C$ de $\Cat$ tenemos las flechas $C \otimes C^\lor \stackrel{\varepsilon}{\rightarrow} I_\Cat$ y $I_\Cat \stackrel{\eta}{\rightarrow} C^\lor \otimes C$. Aplicando el funtor tensorial $F$ obtenemos los diagramas
	
	1. \xymatrix { F(C \otimes C^\wedge) \ar[r]^>>>>>{F(\varepsilon)} & F(I_\Cat) \\
							FC \otimes F(C^\wedge) \ar[u]^{s} \ar[r]^>>>>>{\varepsilon} & I \ar[u]_{f} } \ \ \ 
	y 2. \xymatrix { F(I_\Cat) \ar[r]^>>>>>{F(\eta)} & F(C^\wedge \otimes C) \\
							I \ar[u]^{f} \ar[r]^>>>>>{\eta} & F(C^\wedge) \otimes FC \ar[u]_{s} }
	
	que dicen que $F(C^\wedge)$ es un dual (a derecha) de $F(C)$. 

	Por otro lado, $F(C \otimes C^\wedge)$ es isomorfo a $FC \otimes F(C^\wedge)$ vía $s$. Luego, por la propiedad \ref{dualXporY}, y el hecho de que como $\Vat$ es simétrica ${F(C^\wedge)}^\wedge = FC$, tenemos que $(FC \otimes F(C^\wedge))^\wedge = FC \otimes F(C^\wedge)$ y luego también \linebreak $F(C \otimes C^\wedge)^\wedge = F(C \otimes C^\wedge)$ con la unidad $\eta$ dada por 
	\footnotesize
	$$ I \stackrel{\psi \circ \eta}{\rightarrow} FC \otimes F(C^\wedge) \stackrel{FC \otimes \eta \otimes F(C^\wedge)}{\rightarrow} FC \otimes F(C^\wedge) \otimes FC \otimes F(C^\wedge) \stackrel{s \otimes s}{\rightarrow} F(C \otimes C^\wedge) \otimes F(C \otimes C^\wedge)$$
	\normalsize
	
	Debemos hallar la antípoda $a : End^\lor(F) \rightarrow End^\lor(F)$
	
	Gracias a la simetría del producto tensorial de $\Vat$, tenemos las flechas $FC \otimes F(C^\wedge) \stackrel{\psi}{\rightarrow} F(C^\wedge) \otimes FC$ que como $FC$ es dual de $F(C^\wedge)$ las podemos componer con $\lambda_{C^\wedge}$
	
	Para que induzcan a $a$, debemos verificar que para toda flecha $f:C \rightarrow C'$ en $\Cat$ el siguiente diagrama conmuta
	
	\xymatrix@C=1.5pc { & FC \otimes F(C^\wedge) \ar[rr]^{\psi} & & F(C^\wedge) \otimes FC \ar[dr]^{\lambda_{C^\wedge}} \\
					FC \otimes F(C'^\wedge) \ar[ur]^{FC \otimes F(f)^\wedge} \ar[dr]_{F(f) \otimes F(C'^\wedge)} & & & & End^\lor(F) \\
					& F(C') \otimes F(C'^\wedge) \ar[rr]^{\psi} & &  F(C'^\wedge) \otimes F(C') \ar[ur]_{\lambda_{C'^\wedge} } }

	Para ver eso, completamos el diagrama y miramos

	\xymatrix@C=0.2pc { & FC \otimes F(C^\wedge) \ar[rr]^{\psi} & & F(C^\wedge) \otimes FC \ar[dr]^{\lambda_{C^\wedge}} \\
					FC \otimes F(C'^\wedge) \ar[ur]^{FC \otimes F(f)^\wedge} \ar[dr]_{F(f) \otimes F(C'^\wedge)} \ar[rr]^{\psi} & & F(C'^\wedge) \otimes FC \ar[ur]_{F(c)^\wedge \otimes FC} \ar[dr]^{F(C'^\wedge) \otimes F(c)} & & End^\lor(F) \\
					& F(C') \otimes F(C'^\wedge) \ar[rr]^{\psi} & &  F(C'^\wedge) \otimes F(C') \ar[ur]_{\lambda_{C'^\wedge} } }
	
y las conmutatividades de los paralelogramos son triviales y la del rombo por coend (pues $F(c)={F(c)^\wedge}^\wedge$). 
	
	Debemos verificar ahora que el siguiente diagrama (y el análogo multiplicando por $a$ al otro lado) conmuta
	
	\xymatrix @C=0.5pc {	& End^\lor(F) \otimes End^\lor(F) \ar[rr]^{1 \otimes a} & & End^\lor(F) \otimes End^\lor(F) \ar[dr]^{m} \\
						End^\lor(F) \ar[ur]^{\triangle} \ar[drr]_{\varepsilon} & & & & End^\lor(F) \\
						& & I \ar[urr]_{u}  }
	
	Una última vez, gracias al trabajo que ya hicimos entendiendo qué hacen todas estas operaciones, "`completamos"' el diagrama con otro pentágono adentro de la siguiente forma
	
	\tiny
		\xymatrix @C=0.5pc {	& End^\lor(F) \otimes End^\lor(F) \ar[rr]^{End^\lor(F) \otimes a} & & End^\lor(F) \otimes End^\lor(F) \ar@/^2pc/[ddr]^{m} \\
								& FC \otimes F(C^\wedge) \otimes FC \otimes F(C^\wedge) \ar[u]^{\lambda_C \otimes \lambda_C} \ar[rr]^{FC \otimes F(C^\wedge) \otimes \psi} & & FC \otimes F(C^\wedge) \otimes F(C^\wedge)	\otimes FC \ar[u]^{\lambda_C \otimes \lambda_{C^\wedge}}	\ar[d]^{m_{C,C^\wedge}}	\\
						End^\lor(F) \ar@/^2pc/[uur]^{\triangle} \ar[ddrr]_{\varepsilon} & FC \otimes F(C^\wedge) \ar[l]^{\lambda_C} \ar[u]^{FC \otimes \eta \otimes F(C^\wedge)} \ar[rdd]^{\varepsilon} & & F(C \otimes C^\wedge) \otimes F(C \otimes C^\wedge) \ar[r]^<(0.25){\lambda_{C \otimes C^\wedge}} & End^\lor(F) \\
						& & F(I_\Cat) \otimes F(I_\Cat)^\wedge \ar[urr]^{\lambda_{I_\Cat}}					\\
						& & I \ar[uurr]_{u} \ar[u]^{\cong} }
	\normalsize
	
	Luego, la conmutatividad de los cinco diagramas pequeños que involucran a $\triangle$, $m$, $\varepsilon$, $u$ y $a$, viene dada por la construcción y descripción que hemos hecho de cada una de esas operaciones.
	
	Para verificar la conmutatividad del "`hexágono"' interior, como está llegando a un coend agregamos el objeto $F(C \otimes C^\wedge)$ y el hexágono queda
	
	\tiny
	
	\xymatrix @C=1pc { FC \otimes F(C^\wedge) \otimes FC \otimes F(C^\wedge) \ar[rr]^{FC \otimes F(C^\wedge) \otimes \psi} & & FC \otimes F(C^\wedge) \otimes F(C^\wedge)	\otimes FC \ar[d]^{m_{C,C^\wedge}}	\\
						 FC \otimes F(C^\wedge) \ar[u]^{FC \otimes \eta \otimes F(C^\wedge)} \ar[ddr]_{\varepsilon} \ar[r]^{s} & F(C \otimes C^\wedge) \ar[r]^{F(C \otimes C^\wedge) \otimes F(\varepsilon)^\wedge} \ar[d]^{F(\varepsilon)} & F(C \otimes C^\wedge) \otimes F(C \otimes C^\wedge) \ar[r]^>>>>{\lambda_{C \otimes C^\wedge}} & End^\lor(F) \\ 
						 	 & F(I_\Cat)^\wedge \otimes F(I_\Cat) \ar[urr]_{\lambda_{F(I_\Cat)}}			\\
						 	 & I \ar[u]^{\cong}		}
	
	\normalsize

	donde el triángulo inferior izquierdo es el diagrama 1 del comienzo de la demostración y el derecho conmuta por coend. Resta ver la conmutatividad del rectángulo superior. Comenzamos por la composición superior; para el cálculo de ascensores esta es, recordando la definición de $m_{C,D}$ que dimos en la sección \ref{sec:myu}:
	
	\begin{equation} \label{compsup}
\xymatrix@C=-2pc{ FC \ar@2{-}[d] & &  & \ar@{-}[ld] \ar@{-}[dr] \ \ \ \ \ \ \ar@{}[d]|{\eta} \ \ \ \ \ \ & & & F(C^\wedge) \ar@2{-}[d] & & & & & \\
								 FC \ar@2{-}[d] & & F(C^\wedge) \ar@2{-}[d] & & FC \ar@2{-}[rrd]|{\psi} & & F(C^\wedge) \ar@2{-}[lld]|{\psi} & & & & & \\	
								 FC \ar@2{-}[d] & & F(C^\wedge) \ar@2{-}[rrrrd]|{\psi} & & F(C^\wedge) \ar@2{-}[lld] & \ar@{}[dd]|>>>>>>>>{({s_{C,C^\wedge}^{-1}})^\wedge} & FC \ar@2{-}[lld] & & & & & \\							 
								 FC \ar@{-}[dr] & \ar@{}[d]|>>>>>>>>{s} & F(C^\wedge) \ar@{-}[ld] & & FC \ar@{-}[dr] & & F(C^\wedge) \ar@{-}[ld] & & & & & \\							 
								 & F(C \otimes C^\wedge) & & & & F(C \otimes C^\wedge) & & & & & & }
\end{equation}
										
	Lo primero que haremos es ver que $({s_{C,C^\wedge}^{-1}})^\wedge = s$. Por definición del dual de una flecha, teniendo en cuenta cómo son los $\eta$ y $\varepsilon$ de estas dualidades, $({s_{C,C^\wedge}^{-1}})^\wedge$ está dada por el diagrama
	
\ \ \ \ \ \ \ \ \ \ \ \ \ \ \ \xymatrix@C=-2pc{ & & & \ar@{-}[llld] \ar@{-}[rrrd] \ar@{}[d]|{\eta} & & & & & FC \ar@2{-}[d] & \ \ \ \ \ \ \ \ \ \ \ \ & F(C^\wedge) \ar@2{-}[d] & \\
								 F(C^\lor) \ar@2{-}[rrrrrrd]|{\psi} & & & & & & FC \ar@2{-}[lllllld]|{\psi} & & FC \ar@2{-}[d] & & F(C^\wedge) \ar@2{-}[d] & \\	
								 FC \ar@2{-}[d] & & & \ar@{-}[ld] \ar@{-}[rd] \ \ \ \ \ \ \ar@{}[d]|{\eta} \ \ \ \ \ \ & & & F(C^\wedge) \ar@2{-}[d] & & FC \ar@2{-}[d] & & F(C^\wedge) \ar@2{-}[d] & \\					
								 FC \ar@{-}[rd] & \ar@{}[d]|>>>>{s} & F(C^\wedge) \ar@{-}[ld] & & FC \ar@{-}[rd] & \ar@{}[d]|>>>>{s} & F(C^\wedge) \ar@{-}[ld] & & FC \ar@2{-}[d] & & F(C^\wedge) \ar@2{-}[d] & \\
								 & F(C \otimes C^\wedge) & & & & F(C \otimes C^\wedge) \ar@{-}[ld] \ar@{-}[rd] \ar@{}[d]|<<<<<<{s^{-1}} & & & FC \ar@2{-}[d] & & F(C^\wedge) \ar@2{-}[d] & \\			
								 & & & & FC \ar@2{-}[d] & & F(C^\wedge) \ar@2{-}[rrd]|{\psi} & & FC \ar@2{-}[lld]|{\psi} & & F(C^\wedge) \ar@2{-}[d] & \\							 
								 & & & & FC \ar@2{-}[d] & & FC \ar@{-}[rd] & \ \ \ \ \ \ \ar@{}[d]|{\varepsilon} \ \ \ \ \ \  & F(C^\wedge) \ar@{-}[ld] & & F(C^\wedge) \ar@2{-}[d] & \\							 
								 & & & & FC \ar@{-}[rrrd] & & & \ar@{}[d]|{\varepsilon} & & & F(C^\wedge) \ar@{-}[llld] & \\							 
								 & & & & & & & & & & & }
	
	Primero simplificamos $s$ con $s^{-1}$:

\ \ \ \ \ \ \ \ \ \ \ \ \ \ \ \xymatrix@C=-2pc{ & & & \ar@{-}[llld] \ar@{-}[rrrd] \ar@{}[d]|{\eta} & & & & & FC \ar@2{-}[d] & & F(C^\wedge) \ar@2{-}[d] & \\
								 F(C^\lor) \ar@2{-}[rrrrrrd]|{\psi} & & & & & & FC \ar@2{-}[lllllld]|{\psi} & & FC \ar@2{-}[d] & & F(C^\wedge) \ar@2{-}[d] & \\	
								 FC \ar@2{-}[d] & & & \ar@{-}[ld] \ar@{-}[rd] \ \ \ \ \ \ \ar@{}[d]|{\eta} \ \ \ \ \ \ & & & F(C^\wedge) \ar@2{-}[d] & & FC \ar@2{-}[d] & & F(C^\wedge) \ar@2{-}[d] & \\					
								 FC \ar@{-}[rd] & \ar@{}[d]|>>>>{s} & F(C^\wedge) \ar@{-}[ld] & & FC \ar@2{-}[d] & \ \ \ \ \ \ \ \ \ \ \ \ & F(C^\wedge) \ar@2{-}[rrd]|{\psi} & & FC \ar@2{-}[lld]|{\psi} & \ \ \ \ \ \ \ \ \ \ \ \ & F(C^\wedge) \ar@2{-}[d] & \\							 
								 & F(C \otimes C^\wedge) & & & FC \ar@2{-}[d] & & FC \ar@{-}[rd] & \ \ \ \ \ \ \ar@{}[d]|{\varepsilon} \ \ \ \ \ \ & F(C^\wedge) \ar@{-}[ld] & & F(C^\wedge) \ar@2{-}[d] & \\
								 & & & & FC \ar@{-}[rrrd] & & & \ar@{}[d]|{\varepsilon} & & & F(C^\wedge) \ar@{-}[llld] & \\							 
								 & & & & & & & & & & & }
										
	Luego juntamos la $\eta$ y $\varepsilon$ de la dualidad $C^\wedge \dashv C$ (ver propiedad \ref{DualEnSimetrica}).

\ \ \ \ \ \ \ \ \ \ \ \ \ \ \ \xymatrix@C=-2pc{ & & & \ar@{-}[llld] \ar@{-}[rrrd] \ar@{}[d]|{\eta} & & & & & FC \ar@2{-}[d] & & F(C^\wedge) \ar@2{-}[d] & \\
								 F(C^\lor) \ar@2{-}[rrrrrrd]|{\psi} & & & & & & FC \ar@2{-}[lllllld]|{\psi} & & FC \ar@2{-}[d] & & F(C^\wedge) \ar@2{-}[d] & \\	
								 FC \ar@2{-}[d] & & & & & & F(C^\wedge) \ar@2{-}[rrd]|{\psi} & & FC \ar@2{-}[lld]|{\psi} & & F(C^\wedge) \ar@2{-}[d] & \\							 
								 FC \ar@2{-}[d] & & & & & & FC \ar@{-}[rd] & \ \ \ \ \ \ \ar@{}[d]|{\varepsilon} \ \ \ \ \ \ & F(C^\wedge) \ar@{-}[ld] & \ \ \ \ \ \ \ \ \ \ \ \ & F(C^\wedge) \ar@2{-}[d] & \\
								 FC \ar@2{-}[d] & & & \ar@{-}[ld] \ar@{-}[rd] \ \ \ \ \ \ \ar@{}[d]|{\eta} \ \ \ \ \ \ & & & & & & & F(C^\wedge) \ar@2{-}[d] & \\
								 FC \ar@{-}[rd] & \ar@{}[d]|{s} & F(C^\wedge) \ar@{-}[ld] & & FC \ar@{-}[rrrd] & \ \ \ \ \ \ \ \ \ \ \ \ & & \ar@{}[d]|{\varepsilon} & & & F(C^\wedge) \ar@{-}[llld] & \\							 							 & F(C \otimes C^\wedge) & & & & & & & & & & }
											
	Entonces por la primer igualdad triangular de esa dualidad el diagrama se colapsa en 

\ \ \ \ \ \ \ \ \ \ \ \ \ \ \ \ \ \ \ \ \xymatrix@C=-2pc{ FC \ar@2{-}[d] & & & \ar@{-}[ld] \ar@{-}[rd] \ \ \ \ \ \ \ar@{}[d]|{\eta} \ \ \ \ \ \ & & & F(C^\wedge) \ar@2{-}[d] & & 						FC \ar@{-}[rd] & \ar@{}[d]|{s} & F(C^\wedge) \ar@{-}[ld] & & & \\
								 FC \ar@{-}[rd] & \ar@{}[d]|{s} & F(C^\wedge) \ar@{-}[ld] & & FC \ar@{-}[rd] & \ \ \ \ \ \ \ar@{}[d]|{\varepsilon} \ \ \ \ \ \ & F(C^\wedge) \ar@{-}[ld] & \ \ \ \ \ \ \ \ \ = \ \ \ \ \ \ \ \ \ & 					& F(C \otimes C^\wedge)  & & \\	
								 & F(C \otimes C^\wedge) & & & & & & & & & & }
								 
	La igualdad vale por la primer igualdad triangular de la dualidad $FC \dashv F(C^\lor)$. Volviendo entonces a \eqref{compsup}, por el lema \ref{etapsi} nos queda

\ \ \ \ \ \ \ \ \ \ \ \ \ \ \ \ \ \ \ \ \xymatrix@C=-2pc{ FC \ar@2{-}[d] & & F(C^\wedge) \ar@2{-}[d] & & & \ar@{-}[ld] \ar@{-}[dr] \ar@{}[d]|{\eta} & & & & & & \\
								 FC \ar@2{-}[d] & & F(C^\wedge) \ar@2{-}[rrd]|{\psi} & \ \ \ \ \ \ \ \ \ \ \ \ & F(C^\wedge) \ar@2{-}[lld]|{\psi} & & FC \ar@2{-}[d] & & & & & \\	
								 FC \ar@2{-}[d] & & F(C^\wedge) \ar@2{-}[rrrrd]|{\psi} & & F(C^\wedge) \ar@2{-}[lld] & & FC \ar@2{-}[lld] & & & & & \\							 
								 FC \ar@{-}[dr] & \ar@{}[d]|{s} & F(C^\wedge) \ar@{-}[ld] & & FC \ar@{-}[dr] & \ar@{}[d]|{s} & F(C^\wedge) \ar@{-}[ld] & & & & & \\							 
								 & F(C \otimes C^\wedge) & & & & F(C \otimes C^\wedge) & & & & & & }
	
	que por la coherencia para las $\psi$ se reduce a

\ \ \ \ \ \ \ \ \ \ \ \ \ \ \ \ \ \ \ \ \xymatrix@C=-2pc{ FC \ar@2{-}[d] & & F(C^\wedge) \ar@2{-}[d] & & & \ar@{-}[ld] \ar@{-}[dr] \ar@{}[d]|{\eta} & & & & & & & \\
								 FC \ar@{-}[dr] & \ar@{}[d]|{s} & F(C^\wedge) \ar@{-}[ld] & \ \ \ \ \ \ \ \ \ \ \ \ \ & F(C^\wedge) \ar@2{-}[rrd]|{\psi} & & FC \ar@2{-}[lld]|{\psi} & & & & & & \\	
								 & F(C \otimes C^\wedge) & & & FC \ar@{-}[dr] & \ar@{}[d]|{s} & F(C^\wedge) \ar@{-}[ld] & & & & & & \\							 
								 & & & & & F(C \otimes C^\wedge) & & & & & & & }

	Luego nos resta ver que $F(\varepsilon)^\wedge$ coincide con la parte derecha de este último diagrama. Recordando la unidad de la dualidad de $F(C \otimes C^\wedge)$ que describimos al principio de la demostración, la definición del dual de una flecha, y que $F(\varepsilon) = \varepsilon \circ s^{-1}$, tenemos que $F(\varepsilon)^\wedge$ es el diagrama

\ \ \ \ \ \ \ \ \ \ \ \ \ \ \ \ \ \ \ \ \xymatrix@C=-2pc{ & & & \ar@{-}[llld] \ar@{-}[rrrd] \ar@{}[d]|{\eta} & & & & & & & & \\
								 F(C^\wedge) \ar@2{-}[rrrrrrd]|{\psi} & & & & & & F(C) \ar@2{-}[lllllld]|{\psi} & & & & & \\	
								 FC \ar@2{-}[d] & & & \ar@{-}[ld] \ar@{-}[dr] \ \ \ \ \ \ \ar@{}[d]|{\eta} \ \ \ \ \ \ & & & F(C^\wedge) \ar@2{-}[d] & & & & & \\							 
								 FC \ar@{-}[dr] & \ar@{}[d]|{s} & F(C^\wedge) \ar@{-}[ld] & & FC \ar@{-}[rd] & \ar@{}[d]|{s} & F(C^\wedge) \ar@{-}[ld] & & & & & \\							 
								 & F(C \otimes C^\wedge) & & & & F(C \otimes C^\wedge) \ar@{-}[ld] \ar@{-}[dr] \ar@{}[d]|{s^{-1}} & & & & & & \\							 
								 & & & & FC \ar@{-}[rd] & \ar@{}[d]|{\varepsilon}  & F(C^\wedge) \ar@{-}[ld] & & & & & \\							 
								 & & & & & & & & & & & }
											
	Simplificamos $s$ con $s^{-1}$ y por la primer igualdad triangular de la dualidad de $FC$ obtenemos lo deseado.
\end{proof}

\pagebreak
\section{El caso $\Vat = Vec_K$} \label{sec:Vec}

En esta sección probaremos lo anticipado en las observaciones \ref{obs1} y \ref{obs2}

\subsection{Comenzando a trabajar con elementos}

En esta sección trabajaremos el caso $\Vat = Vec_K$, $\Vat_0 = Vec_K^{<\infty}$, $\otimes = \otimes_K$. Es decir, $F, G: \Cat \rightarrow Vec_K$ y $dim_K(GC)<\infty$ para todo $C$. Como esta categoría es simétrica, duales a derecha y a izquierda coinciden, y son por supuesto el dual usual de un espacio vectorial. En la sección \ref{sub:objdual} mostramos detalladamente cómo son $\eta$ y $\varepsilon$ para esta dualidad. Como ya mencionamos anteriormente, la definición $Nat^\lor(F,G)= \int^\Cat FC \otimes (GC)^\wedge$ coincidirá con $Nat^\lor(F,G)= \int^\Cat Hom(FC,GC)^\wedge$ si $F$ también toma valores en los espacios vectoriales de dimensión finita.

Dentro del $K$-espacio vectorial $Nat^\lor(F,G)$ tendremos los elementos $\lambda_C (v \otimes w^\lor)$, donde $v \in FC$ y $w^\lor \in (GC)^\lor = GC^*$. Seguiremos a veces la notación de \cite{JS}, usando $[v \otimes w^\lor]$ para referirnos a la clase de estos elementos, sin especificar el objeto $C$. Por la construcción de los coends (similar a la de los colímites) en la categoría $Vec_K$, como el coend lo que debe hacer es que para cada flecha $f: C \rightarrow C'$ en $\Cat$ el siguiente rombo

\ \ \ \ \ \ \ \ \ \ \ \ \ \ \ \xymatrix{ & FC \otimes FC^\lor \ar[dr]_{\lambda_C} \\
						FC \otimes (FC')^\lor \ar[ru]_{FC \otimes F(f)^\lor} \ar[rd]^{F(f) \otimes (FC')^\lor} & & End^\lor(F) \\
						 & FC' \otimes (FC')^\lor \ar[ru]^{\lambda_{C'}} }
	
conmute, tenemos la siguiente caracterización del $K$-e.v. $Nat^\lor(F,G)$ (comparar con \cite{JS}, p.433):

\begin{\prop}
	Sean $F,G: \Cat \rightarrow Vec_K$ dos funtores tales que $dim_K(GC)<\infty$ para todo $C$. Sean, para cada $C$, $\{v_i^C\}_{i \in I_C}$ y $\{^Cw_j^\lor \}_{j \in J_C}$ bases de $FC$ y de $GC^\lor$ respectivamente. Entonces, $Nat^\lor(F,G)$ es el $K$-e.v. generado por los elementos $\{ \lambda_C (v_i^C \otimes {}^Cw_j^\lor) \}$, variando $C$, $i$ y $j$, cuya notación se alivia escribiendo $\lambda_C (v \otimes w^\lor)$ o bien $[v \otimes w^\lor]$, y que están sujetos para cada \linebreak $f: C \rightarrow C'$ a la relación (que se obtiene observando el rombo anterior)
	$$ \lambda_C (v \otimes G(f)^\lor(w^\lor)) = \lambda_{C'} (F(f)(v) \otimes w^\lor) $$
	
	Cada inclusión $\lambda_C$ queda así definida en una base, y su linealidad nos da la forma de calcular combinaciones lineales entre $\lambda_C (v_1 \otimes w_1^\lor)$ y $\lambda_C (v_2 \otimes w_2^\lor)$ (cuando se tiene el mismo $C$ para las dos).
	
\end{\prop}

Lo primero que veremos es una fórmula para la coevaluación \linebreak $\eta: F \Rightarrow Nat^\lor(F,G) \otimes G$. Recordemos que hemos descripto a sus componentes $\eta_C$ en \eqref{eta}. Ahora, en el caso $Vec$, podemos dar el valor de $\eta_C$ en un elemento $v$ por

\begin{equation} \label{etaenvec}
	{\eta}_C (v) = \sum_{i=1}^n \lambda_C (v \otimes w_i^\lor) \otimes w_i
\end{equation}

Observemos que esta fórmula es análoga a la $\gamma_C$ definida en \cite{JS}, p.439. Ahora vamos a probar la suryectividad de $\tilde{\rho}$ definida en la propiedad \ref{morfdecomp}.

\begin{\prop} \label{rhosury}
	Sea $C$ una $K$-coálgebra y $U: Comod_{<\infty}C \rightarrow Vec_K^{<\infty}$ el funtor de olvido. Entonces $\tilde{\rho}: End^\lor(U) \rightarrow C$ es suryectivo
\end{\prop}

\begin {proof}
	Sea $v \in C$. $C$ es un $C$-comódulo con el coproducto dado por la comultiplicación $\triangle$, y en cualquier comódulo, dado un elemento de él, podemos obtener un subcomódulo de dimensión finita que contiene a ese elemento. En efecto, si $\{a_i\}_{i \in I}$ es una base de $C$, y $\triangle(v)= \sum_{j=1}^n a_{i_j} \otimes v_j$, con $a_{i_j} \in C$ consideramos el subespacio $V$ de $C$ generado por $v$ y por los $v_j$. Para ver que es un subcomódulo, debemos verificar que $\triangle(v_j) \in C \otimes V \ \forall j$.
	
	Para ello notemos que el diagrama que dice que el coproducto respeta la comultiplicación (o en este caso el diagrama que expresa la coasociatividad de la comultiplicación), aplicado al elemento $v$, dice que $$\sum_{j=1}^n \triangle(a_{i_j}) \otimes v_j = \sum_{j=1}^n a_{i_j} \otimes \triangle(v_j)$$
	
	Escribamos $\triangle(a_{i_j}) = \sum_{k=1}^m a_{i_k} \otimes \alpha_{i_j}^k$ (donde se entiende que $a_{i_j}$ no tiene por qué coincidir con $a_{i_k}$ aunque $j$ sea igual a $k$), luego 
	$$ \sum_{j=1}^n a_{i_j} \otimes \triangle(v_j) = \sum_{j=1}^n ( \sum_{k=1}^m a_{i_k} \otimes \alpha_{i_j}^k ) \otimes v_j = \sum_{k=1}^m a_{i_k} \otimes (\sum_{j=1}^n \alpha_{i_j}^k \otimes v_j) $$
	Como los $a_i$ son una base, obtenemos que 
	$$ \triangle(v_j) = \left\{ \begin{array}{rl} \sum_{l=1}^n \alpha_{i_l}^k \otimes v_l & \text{si } a_{i_j} = a_{i_k} \text{ para algún } k\\
																								0 & \hbox{ en caso contrario }\end{array} \right. $$
	
	Afirmamos ahora que $\tilde{\rho} ( \lambda_V (v \otimes \varepsilon|_V) ) = v$. Notemos primero para ver esto que el diagrama \xymatrix {& C \otimes C \ar[dl]_{C \otimes \varepsilon}  & \\
						 C \otimes K & C \ar[u]_{\triangle } \ar[l]_{\cong}  } aplicado a $v$ nos da la igualdad $\sum_{j=1}^n a_{i_j} \cdot \varepsilon(v_j) = v$.
	
	Luego, usando el diagrama \eqref{defderho} tenemos que 
	$$\tilde{\rho} ( \lambda_V (v \otimes \varepsilon|_V) ) = (C \otimes \varepsilon) \circ (\triangle \otimes V^\wedge) (v \otimes \varepsilon|_V) = $$
	$$(C \otimes \varepsilon) (\sum_{j=1}^n a_{i_j} \otimes v_j \otimes \varepsilon|_V) = \sum_{j=1}^n a_{i_j} \cdot \varepsilon(v_j) = v $$
	
	Notemos que hay dos $\varepsilon$ distintas, una la counidad $\varepsilon: C \rightarrow K$ la cual se restringe a $V$, y otra la de la dualidad de $V$ que corresponde a la evaluación $\varepsilon: V \otimes V^\lor \rightarrow K$.
\end{proof}	

\subsection{Los minimal models y algunos lemas}

Desarrollaremos en esta sección los conceptos y lemas necesarios para poder demostrar la inyectividad de $\tilde{\rho}$ y el teorema fundamental de la teoría de Tannaka como está enunciado en el item 3 (en la sección \ref{sec:levdeF}). En todo este desarrollo es donde seguiremos más de cerca a \cite{JS}. Primero recordaremos varias definiciones y construcciones que se pueden hacer en una categoría abeliana (para la definición de categoría abeliana ver \cite{ML}, VIII, 3).

\begin{itemize}
	\item Si $A$ es un objeto de $\Cat$, un subobjeto $B$ de $A$ es un monomorfismo \xymatrix { B \hbox{  } \ar@{>->}[r]^{s} & A}
	
	\item La categoría $\Cat$ se dice artiniana si cualquier cadena decreciente (o sea hacia la izquierda) de subobjetos se estabiliza.

	\item La intersección de dos subobjetos es el subobjeto más grande contenido en ambos, y se obtiene como el pull-back de las dos flechas. La unión de dos subobjetos es el subobjeto más chico que contiene a ambos y se obtiene gracias a la artinianidad.
	
	\item Si $A$ es un objeto de $\Cat$, un cociente $B$ de $A$ es un epimorfismo \xymatrix@1 { A \ar@{->>}[r]^{c} & B}
	
	\item La categoría $\Cat$ se dice noetheriana si $\Cat^{op}$ es artiniana, o sea si cualquier cadena decreciente (o sea hacia la derecha) de cocientes se estabiliza.
	
	\item Recordemos que en una categoría abeliana cualquier flecha $A \stackrel{f}{\rightarrow} B$ admite una factorización única salvo isomorfismo
	
	\ \ \ \ \ \ \ \ \ \ \ \ \ \ \ \ \ \ \ \ \ \ \ \ \ \ \ \ \ \ \xymatrix { A \ar[rr]^{f} \ar@{->>}[rd]_{e} & & B \\
							& C \hbox{  } \ar@{>->}[ru]_{m} }
							
	$f=me$ donde $m$ es mono y $e$ es epi (ver \cite{ML}, VIII, 3, proposition 1).

	\item Además, en una categoría abeliana, no sólo los pull-backs de los mono-\\morfismos son monomorfismos, sino que además los pull-backs de epimorfismos son epimorfismos. Como los axiomas de categoría abeliana son autoduales, también los push-outs de monos son monos y los de epis son epis.

\end{itemize}

De aquí en adelante supondremos en esta sección que $\Cat$ es abeliana. Consideremos ahora los funtores $F: \Cat \rightarrow Vec_K^{<\infty}$ y $F^\wedge: \Cat^{op} \rightarrow Vec_K^{<\infty}$. Observemos que si $F$ es fiel, $\Cat$ resulta artiniana y noetheriana. Asumamos que $F$ es exacto a izquierda, entonces $F$ preserva sucesiones exactas a izquierda, monomorfismos y límites. Recordemos en primer lugar a la categoría $\Gamma_F$ llamada diagrama de $F$, cuyos objetos son pares $(x,C)$ donde $C$ es un objeto de $\Cat$ y $x \in FC$, y cuyas flechas $(x,C) \stackrel{f}{\rightarrow} (x',C')$ son flechas $f: C \rightarrow C'$ de $\Cat$ tales que $F(f)(x) = x'$. Vía Yoneda, tenemos que los objetos de $\Gamma_F$ son transformaciones naturales $[C,-] \stackrel{x}{\Rightarrow} F$, y las flechas son $C \stackrel{f}{\rightarrow} C'$ que hacen conmutar al diagrama

\ \ \ \ \ \ \ \ \ \ \ \ \ \ \ \ \ \ \ \ \ \ \ \ \ \ \ \ \ \ \xymatrix { [C,-] \ar@2{->}[dr]^{x} \\
						& F \\
						[C',-] \ar@2{->}[ur]_{x'} \ar@2{->}[uu]^{f^*} }
						
$F$ es el colímite de su diagrama. Dados $(x,C) \in \Gamma_F$ y un subobjeto \xymatrix { B \hbox{  } \ar@{>->}[r]^{s} & C}, diremos que $x$ está contenido en el subobjeto $B$ si $x$ está en la imagen de $FB \stackrel{F(s)}{\rightarrow} FC$. Como $F$ preserva pull-backs, si $x$ está contenido en dos subobjetos $B$ y $B'$ entonces está contenido en su intersección (pull-back) $B \cap B'$: por construcción de los pull-backs en $Vec_K$, tenemos

\ \ \ \ \xymatrix@C=0.2pc@R=0.5pc { & & & & b \ar@{|->}[dddd] 											  	& & & & & &   {\exists b''} \ar@{|.>}[rrrr] \ar@{|.>}[dddd] & & & & b \ar@{|->}[dddd] \\
						& F(B \cap B') \ar@{>->}[rr] \ar@{>->}[dd] & & FB \ar@{>->}[dd] &   & & & & & &   & F(B \cap B') \ar@{>->}[rr] \ar@{>->}[dd] & & FB \ar@{>->}[dd]\\
						& & p.b. & & 																						& \ar@2{->}[rrr] & & & & & & & p.b.  \\
						& FB' \ar@{>->}[rr] & & FA & 																			& & & & & &   & FB' \ar@{>->}[rr] & & FA & \\
						b' \ar@{|->}[rrrr] & & & & x 																			& & & & & &   b' \ar@{|->}[rrrr] & & & & x }
						
Luego, si $\Cat$ es artiniana, tomar sucesivas intersecciones se estabiliza y obtenemos que $x$ debe estar contenido en un mínimo objeto de $\Cat$ al que llamamos el objeto generado por $x$ y notamos $< \! \! x \! \! >$. Consideramos la subcategoría plena $Span(F)$ de $\Gamma_F$ compuesta de aquellos pares $(C,x)$ donde $C= \ < \! \! x \! \! >$. Como tenemos (ver el diagrama anterior)

\ \ \ \ \ \ \ \ \ \ \ \ \ \ \ \ \ \ \ \xymatrix { [B,-] \ar@/^/@2{->}[drrr]^{b} \ar@2{->}[dr] \\
						& [B \cap B',-] \ar@2{->}[rr]^{b''} & & F \\
						[B',-] \ar@/_/@2{->}[urrr]^{b'} \ar@2{->}[ur] }
						
$Span(F)$ resulta cofinal en $\Gamma_F$ y por lo tanto $F$ es también el colímite de su diagrama sobre $Span(F)$. Este nuevo colímite es filtrante, el diagrama anterior muestra la condición de ser filtrante para los objetos de $Span(F)$, y la condición para las flechas se satisface pues entre todo par de objetos de $Span(F)$ hay a lo sumo una flecha: supongamos que tenemos \linebreak $f,g: (C,x) \rightarrow (C',x')$. Entonces $f$ y $g$ son flechas $f,g: C \rightarrow C'$ tales que $F(f)(x)=x'=F(g)(x)$. Consideramos el núcleo de $f-g$

\ \ \ \ \ \ \ \ \ \ \ \ \ \ \ \ \ \ \ \ \ \ \ \ \xymatrix {K \hbox{  } \ar@{>->}[r] & A \ar[rr]^{f-g} & & B}

Como $F$ es exacto a izquierda tenemos otro núcleo, ahora en $Vec_K$, y por construcción de los núcleos allí obtenemos

\ \ \ \ \ \ \ \ \ \ \ \ \ \ \ \ \ \ \ \ \ \ \ \ \xymatrix@R=0.5pc {FK \hbox{  } \ar@{>->}[r] & FA \ar[rr]^{F(f)-F(g)} & & FB \\
					 \exists x' \ar@{|.>}[r] & x \ar@{|->}[rr] & & 0}
					 
Luego $x$ está contenido en $K$, pero como $A= \ < \! \! x \! \! >$ debe ser $K=A$ y luego $f=g$. Estamos ahora en condiciones de probar

\begin{\prop} \label{colimfiltrante}
	Sean $\Cat$ una categoría abeliana y enriquecida en $Vec_K$, y \linebreak $F: \Cat \rightarrow Vec_K^{<\infty}$ un $Vec_K$-funtor exacto y fiel. Entonces los elementos del coend $F \otimes_\Cat F^\wedge = End^\lor(F)$ son $[x \otimes \phi]$ (recordemos que estos son $\lambda_C(x \otimes \phi)$), con $x \in FC$, $\phi \in FC^\wedge$ y $C$ un objeto de $\Cat$, y vale la igualdad $[x \otimes \phi] = [x' \otimes \phi']$ si y sólo si existen $(y,D) \in Span(F)$, $f: (y,D) \rightarrow (x,C)$ y $g: (y,D) \rightarrow (x',C')$ tales que $F^\wedge(f)(\phi) = F^\wedge(g)(\phi')$. 
\end{\prop}

\begin{proof}

Tenemos las flechas $[C,-] \stackrel{x}{\Rightarrow} F$ que nos dan el isomorfismo $\displaystyle \colim_{(x,C) \in \Gamma_F} [C,-] \stackrel{\cong}{\Rightarrow} F$. Multiplicando a esas flechas por $F^\wedge$ obtenemos flechas $[C,-] \otimes_\Cat F^\wedge \stackrel{\psi_x}{\Rightarrow} F \otimes_\Cat F^\wedge$ que están inducidas por

\ \ \ \ \ \ \ \ \ \ \ \ \ \ \ \ \ \ \ \xymatrix { {\int^{D \in \Cat} [C,D] \otimes F^\wedge D} \ar@{.>}[r]^{\psi_x} & {\int^{D \in \Cat} FD \otimes F^\wedge D} \\
						[C,D] \otimes F^\wedge D \ar[u]^{\lambda_D} \ar[r]^{x_D \otimes F^\wedge D} & FD \otimes F^\wedge D \ar[u]^{\lambda_D} }

Las flechas $\psi_x$ inducen un nuevo isomorfismo 
$$ \colim_{(x,C) \in \Gamma_F} [C,-] \otimes_\Cat F^\wedge \stackrel{\cong}{\Rightarrow} F \otimes_\Cat F^\wedge $$

Veremos que $[C,-] \otimes_\Cat F^\wedge = F^\wedge C$, y luego como $Span(F)$ es cofinal y cofiltrante se tendrá lo deseado. El siguiente diagrama muestra que $F^\wedge C$ es un di-cono, y dado otro di-cono $Z$ queremos construir la $\varphi$ que hace conmutar los triángulos. Dada una flecha $a: A \rightarrow A'$, tenemos el diagrama

\ \ \ \ \ \ \ \ \ \ \ \xymatrix { & F^\wedge A \otimes [C,A] \ar[dr]^{\lambda_A} \ar@/^0.5pc/[drrr]^{z_A} \\
					F^\wedge A' \otimes [C,A] \ar[ur]^{F^\wedge(a) \otimes [C,A]} \ar[dr]^{F^\wedge A' \otimes a_*} & & F^\wedge C \ar@{.>}[rr]^{\varphi} & & Z \\
						& F^\wedge A' \otimes [C,A'] \ar[ur]^{\lambda_{A'}} \ar@/_0.5pc/[urrr]^{z_{A'}} }

donde, por definición, $\lambda_A(x,\phi) = F^\wedge (\phi) (x)$ y $a_*$ es "`componer segundo con $a$"'. El primer rombo conmuta entonces por la funtorialidad de $F^\wedge$. Defi- nimos $\varphi(x)=z_C (x,id_C)$. Queremos verificar entonces que $\forall \ x \in F^\wedge A$ y para toda flecha $\phi: C \rightarrow A$, vale que $z_A (x, \phi) = z_C (F^\wedge(\phi)(x), id_C)$. Eso vale pues la condición de que $Z$ sea un di-cono para esta flecha se traduce a
	
\ \ \ \ \ \ \ \ \ \ \ \ \ \ \ \ \ \ \ \xymatrix { & F^\wedge C \otimes [C,C] \ar[dr]^{z_C}  \\
					F^\wedge A \otimes [C,C] \ar[ur]^{F^\wedge(\phi) \otimes [C,A]} \ar[dr]_{F^\wedge A' \otimes \phi_*} & & Z \\
						& F^\wedge A \otimes [C,A] \ar[ur]_{z_A} }
						
Persiguiendo a $(x,id_C)$ obtenemos lo deseado. Concluimos entonces, por la construcción de los colímites filtrantes, que los elementos de este colímite son clases de $(x,\phi)$, con la condición para la igualdad descripta en el enunciado de la proposición.
\end{proof}

La idea ahora es realizar la definición dual a la de $< \! \! x \! \! >$. Sean \linebreak $\phi \in FC^\wedge$, y \xymatrix@1 { C \ar@{->>}[r]^{c} & B \ } un cociente. Decimos que $B$ soporta a $\phi$ si $\phi$ está en la imagen de \xymatrix@1 { F^\wedge B \ar@{->>}[r]^{F^\wedge(c)} & F^\wedge C }. Usando la noetherianidad de $\Cat$, obtenemos que $\phi$ está soportada en un mínimo cociente \xymatrix@1 { C \ar@{->>}[r]^{c} & > \! \! \phi \! \! <} al que llamamos el cogenerado por $\phi$. Probaremos ahora los siguientes lemas que utilizaremos luego para demostrar el teorema fundamental.

\begin{lemma}
	Sean $\Cat$ una categoría abeliana y enriquecida en $Vec_K$, y \linebreak $F: \Cat \rightarrow Vec_K^{<\infty}$ un $Vec_K$-funtor exacto y fiel. Sean $C$ un objeto de $\Cat$, $x \in FC$ y $\phi \in F^\wedge C$ tales que $[x \otimes \phi] = 0$. Entonces existe un subobjeto \xymatrix@1 { B \hbox{  } \ar@{>->}[r]^{s} & C} tal que $x$ está contenido en $B$ y $F^\wedge(s)(\phi) = 0$.
\end{lemma}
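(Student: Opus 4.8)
The plan is to reduce everything to the filtered-colimit description of $End^\lor(F)$ provided by Proposition \ref{colimfiltrante}, and then to split the resulting arrow through its image using the mono--epi factorization available in the abelian category $\Cat$.

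First I would rewrite the hypothesis $[x \otimes \phi] = 0$ as an equality of classes to which Proposition \ref{colimfiltrante} applies. Since $\lambda_C$ is bilinear we have $0 = \lambda_C(x \otimes 0) = [x \otimes 0]$, where $0 \in F^\wedge C$, so the hypothesis reads $[x \otimes \phi] = [x \otimes 0]$. By Proposition \ref{colimfiltrante} there then exist $(y,D) \in Span(F)$ and arrows $f, g : (y,D) \rightarrow (x,C)$ in $\Gamma_F$ with $F^\wedge(f)(\phi) = F^\wedge(g)(0)$; but $F^\wedge(g)$ is linear, so the right-hand side is $0$. In particular I extract a single arrow $f : D \rightarrow C$ in $\Cat$ satisfying $F(f)(y) = x$ and $F^\wedge(f)(\phi) = 0$.

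Next I would factor $f$. In the abelian category $\Cat$ every arrow admits an (essentially unique) factorization $f = s \circ e$ with $e : D \twoheadrightarrow B$ epi and $s : B \rightarrowtail C$ mono (see \cite{ML}, VIII, 3, proposition 1); this mono $s$ is the desired subobject. That $x$ is contained in $B$ is immediate by applying $F$: we get $x = F(f)(y) = F(s)\bigl(F(e)(y)\bigr)$, so $x$ lies in the image of $F(s)$. It remains to verify $F^\wedge(s)(\phi) = 0$. As $F^\wedge$ is contravariant, $F^\wedge(f) = F^\wedge(e) \circ F^\wedge(s)$, whence $F^\wedge(e)\bigl(F^\wedge(s)(\phi)\bigr) = F^\wedge(f)(\phi) = 0$. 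Since $F$ is exact it preserves epimorphisms, so $F(e)$ is surjective, and its transpose $F^\wedge(e) = F(e)^\wedge$ is consequently injective. Injectivity of $F^\wedge(e)$ together with $F^\wedge(e)\bigl(F^\wedge(s)(\phi)\bigr) = 0$ forces $F^\wedge(s)(\phi) = 0$, which completes the argument.

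The only genuinely delicate point is the first step: one must feed Proposition \ref{colimfiltrante} the correct representatives (here $(x,0)$ for the zero class) so that the equation $[x\otimes\phi]=0$ yields an honest arrow $f$ of $\Gamma_F$ carrying $y$ to $x$ and annihilating $\phi$ under $F^\wedge$. Once that arrow is secured, the remainder is the factorization in $\Cat$ and the elementary observation that the transpose of a surjection of finite-dimensional spaces is injective, both of which are routine.
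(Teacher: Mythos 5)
Your proof is correct, and it takes a genuinely more direct route than the paper's. You feed Proposition \ref{colimfiltrante} the pair of representatives $(x,\phi)$ and $(x,0)$ at the \emph{same} object $C$, extract a single arrow $f:D\rightarrow C$ with $F(f)(y)=x$ and $F^\wedge(f)(\phi)=0$, and then take $B$ to be the image of $f$; killing $\phi$ along the monic part $s$ only needs $F^\wedge(e)$ injective, which holds because the exact functor $F$ sends the epi $e$ to a surjection, whose transpose is injective. The paper instead makes a long detour through minimal models before ever invoking Proposition \ref{colimfiltrante}: it forms $<\!\!x\!\!>$ and $>\!\!\phi\!\!<$, transports $(x,\phi)$ to a representative $(x',\phi')$ on the epi-mono image $C'$ of $<\!\!x\!\!> \rightarrow C \rightarrow\; >\!\!\phi\!\!<$, proves $C'=\;<\!\!x'\!\!>\;=\;>\!\!\phi'\!\!<$, and only then applies the proposition; minimality of $<\!\!x'\!\!>$ forces the resulting arrow to be epi, hence $\phi'=0$, hence $C'=0$, and the lemma follows with the canonical choice $B=\;<\!\!x\!\!>$. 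The ingredients are the same in both arguments (Proposition \ref{colimfiltrante}, epi-mono factorization, exactness of $F$, duals of surjections are injective), but your argument needs no minimality at all, which is exactly what lets you skip the paper's steps on $<\!\!x'\!\!>$ and $>\!\!\phi'\!\!<$. What the paper's detour buys is the byproduct recorded in the remark immediately after the lemma: every element of $End^\lor(F)$ admits a \emph{minimal model} $(x',\phi')$ with $C'=\;<\!\!x'\!\!>\;=\;>\!\!\phi'\!\!<$, and this fact is reused later, in the proof that $\tilde{\rho}$ is injective, where one assumes without loss of generality that $V=\;>\!\!\phi\!\!<$. So your proof establishes the lemma as stated with less machinery, producing some subobject (the image of $f$) rather than the minimal one, but it does not by itself yield the minimal-model statement that the paper extracts from its longer proof.
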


\begin{proof}

 Consideramos la flecha \xymatrix@1 {< \! \! x \! \! > \hbox{  } \ar@{>->}[r]^{s} & C \ar@{->>}[r]^{c} & > \! \! \phi \! \! <} y su facto-\linebreak rización epi-mono
 
\ \ \ \ \ \ \ \ \ \ \ \ \ \ \ \ \ \ \ \xymatrix@1 {< \! \! x \! \! > \hbox{  } \ar@{->>}[rd]_{p} \ar@{>->}[r]^{s} & C \ar@{->>}[r]^{c} & > \! \! \phi \! \! < \\
 								& C' \hbox{  } \ar@{>->}[ru]_{i} }
 								
	Como $x$ está contenido en $< \! \! x \! \! >$, existe $x_1 \in F(< \! \! x \! \! >)$ tal que \linebreak $x=F(s)(x_1)$. Definimos $x' = F(p)(x_1)$. Análogamente, como $\phi$ está soportada en $> \! \! \phi \! \! <$, existe $\phi_1 \in F^\wedge(> \! \! \phi \! \! <)$ tal que $\phi = F^\wedge(c)(\phi_1)$. Definimos $\phi'=F^\wedge(i)(\phi_1)$. Verifiquemos que $[x' \otimes \phi'] = [x \otimes \phi]$:
	
	$$ [x' \otimes \phi'] = [F(p)(x_1) \otimes \phi'] = [x_1 \otimes F^\wedge(p)(\phi')] = [x_1 \otimes F^\wedge(ip)(\phi_1)] = \\ $$
	$$ = [x_1 \otimes F^\wedge(cs)(\phi_1)] = [x_1 \otimes F^\wedge(s)(\phi)] = [F(s)(x_1) \otimes \phi] = [x \otimes \phi] $$

Queremos ver ahora que $C'= \ < \! \! x' \! \! >$. En primer lugar, notemos que $< \! \! x_1 \! \! > \ = \ < \! \! x \! \! >$, pues se tiene

\ \ \ \ \ \ \ \ \ \ \ \ \ \ \ \ \ \ \ \ \ \ \ \ \xymatrix {< \! \! x_1 \! \! > \hbox{  } \ar@{>->}[r]^{s_1} & < \! \! x \! \! > \hbox{  } \ar@{>->}[r]^{s} & C}

tal que aplicando $F$ 

\ \ \ \ \ \ \ \ \ \ \ \ \ \ \ \ \ \ \ \ \ \ \ \ \xymatrix@R=0.5pc {F(< \! \! x_1 \! \! >) \hbox{  } \ar@{>->}[r]^{F(s_1)} & F(< \! \! x \! \! >) \hbox{  } \ar[r]^{F(s)} & FC \\
					 \exists x_2 \ar@{|->}[r] & x_1 \ar@{|->}[r] & x}
					 
Es decir, x está contenido en $< \! \! x_1 \! \! >$, pero como $< \! \! x \! \! >$ era mínimo con esta propiedad se tiene la igualdad. Consideramos entonces el pull-back

\xymatrix@C=0.3pc@R=0.5pc { & & & & 	 											  						  & & & & & &   {\exists x_P} \ar@{|.>}[rrrr] \ar@{|.>}[dddd] & & & & x'_1 \ar@{|->}[dddd] \\
						& P \ar@{->>}[rr] \ar@{>->}[dd] & & < \! \! x' \! \! > \ar@{>->}[dd] &  		  & & & & & &   & FP \ar[rr] \ar[dd] & & F(< \! \! x' \! \! >) \ar[dd] \\
						& & p.b. & & 																										& \ar@2{->}[rrr]^{F} & & & & & & & p.b.  \\
						& < \! \! x_1 \! \! > \ar@{->>}[rr] & & C' & 																	& & & & & &   & F(< \! \! x_1 \! \! >) \ar[rr] & & FC' & \\
						& & & & 		 																										& & & & & &   x_1 \ar@{|->}[rrrr] & & & & x' }

que nos dice en particular que $x_1$ está contenido en $P$, pero $< \! \! x_1 \! \! >$ era mínimo con esta propiedad, luego $P = \ < \! \! x_1 \! \! >$ y la flecha $P \rightarrow < \! \! x_1 \! \! > \rightarrow C'$ es un epimorfismo. Luego por unicidad de la factorización epi-mono, la flecha $< \! \! x' \! \! > \rightarrow C'$ debe ser un isomorfismo. 

De manera dual se demuestra que $C'= \ > \! \! \phi \! \! <$. Ahora, por la propiedad \ref{colimfiltrante}, como $[x' \otimes \phi'] = 0$, existen $(y,D) \in Span(F)$ y $f: D \rightarrow C'$, $g=0: A \rightarrow 0$ tales que $F(f)(y)=x'$ y $F^\wedge(f)(\phi')=F^\wedge(g)(0)=0$. Tomando la factorización epi-mono de $f$, y aplicándole $F$,

	\xymatrix { <y> \ar[rr]^{f} \ar@{->>}[rd]_{e} & & < \! \! x' \! \! >  & &  			F(<y>) \ar[rr]^{F(f)} \ar[rd]_{F(e)} & & F(< \! \! x' \! \! >) \\
							& B \hbox{  } \ar@{>->}[ru]_{m} & 					\ar@2{->}[rr]^{F} & & 							& FB \hbox{   } \ar@{>->}[ru]_{F(m)} }
							
Luego como $F(f)(y)=x'$ debe existir $b \in FB$ tal que $F(m)(b) = x'$, o sea $x'$ está contenido en $B$, pero como $< \! \! x' \! \! >$ era el mínimo con esta propiedad, entonces $B= \ < \! \! x' \! \! >$ y luego $f$ es epi.

Luego $F^\wedge(f)$ es mono, y como $F^\wedge(f)(\phi')=0$ obtenemos que $\phi'=0$ con lo cual $C' = \ > \! \! \phi' \! \! < \ = 0$. Luego la composición  \xymatrix@1 {< \! \! x \! \! > \hbox{  } \ar@{>->}[r]^{s} & C \ar@{->>}[r]^{c} & > \! \! \phi \! \! <} tam-\linebreak bién es $0$ y, tomando como $B$ a $< \! \! x \! \! >$, 
$$ F^\wedge(s)(\phi) = F^\wedge(s)(F^\wedge(c)(\phi')) = F^\wedge(c \circ s)(\phi') = F^\wedge(0)(\phi') = 0 $$
\end{proof}

\begin{remark}
	Siguiendo la demostración de esta última propiedad, se verá que hemos probado también que dado un elemento $\lambda_C(x \otimes \phi)$ cualquiera de $Nat^\lor(F,G)$, existe un $\lambda_C'(x' \otimes \phi') = \lambda_C(x \otimes \phi)$ tal que $C'=\ > \! \! \phi' \! \! < \ = <x>$. A este $\lambda_C'(x' \otimes \phi')$ se lo llama en \cite{JS} un "`minimal model"' de $\lambda_C(x \otimes \phi)$.
\end{remark}

\begin{lemma} \label{subobjeto}
	Sean $\Cat$ una categoría abeliana y enriquecida en $Vec_K$, y \linebreak $F: \Cat \rightarrow Vec_K^{<\infty}$ un $Vec_K$-funtor exacto y fiel. Sean $C$ un objeto de $\Cat$, y $E \subset \tilde{F}C$ un subcomódulo. Entonces existe un subobjeto \xymatrix {B \hbox{  } \ar@{>->}[r]^{s} & C} tal que $E=Im (F(s))$.
\end{lemma}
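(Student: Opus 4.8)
La estrategia es reducir el enunciado a una afirmaci\'on sobre un solo elemento y luego tomar una uni\'on de subobjetos. Como $E$ es un subespacio de dimensi\'on finita de $FC$, elijo una base $x_1,\dots,x_k$ de $E$ y, para cada $x_j$, considero el subobjeto $s_j : \langle x_j \rangle \hookrightarrow C$ generado por $x_j$ (que existe pues $F$ exacto y fiel hace a $\Cat$ artiniana). La afirmaci\'on central que probar\'e es que $\mathrm{Im}(F(s_j)) \subseteq E$ para todo $j$. Una vez establecida, defino $B$ como la uni\'on de los subobjetos $\langle x_1 \rangle, \dots, \langle x_k \rangle$, con inclusi\'on $s : B \hookrightarrow C$. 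Como $F$ es aditivo y exacto, preserva esta uni\'on como suma de subespacios, esto es $\mathrm{Im}(F(s)) = \sum_j \mathrm{Im}(F(s_j))$, de donde $\mathrm{Im}(F(s)) \subseteq E$; la inclusi\'on rec\'{\i}proca es inmediata porque cada $x_j \in \mathrm{Im}(F(s_j)) \subseteq \mathrm{Im}(F(s))$ y los $x_j$ generan $E$. As\'{\i} resultar\'a $E = \mathrm{Im}(F(s))$.

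Para la afirmaci\'on central fijo $j$ y escribo $B_0 = \langle x_j \rangle$, $s = s_j$. Sea $x_0 \in FB_0$ el \'unico elemento con $F(s)(x_0) = x_j$; igual que en la demostraci\'on del lema anterior se tiene $\langle x_0 \rangle = B_0$, es decir $x_0$ genera todo $B_0$. Usar\'e la naturalidad de la coevaluaci\'on $\eta : F \Rightarrow End^\lor(F) \otimes F$ en la flecha $s$, que da $\eta_C(x_j) = (End^\lor(F) \otimes F(s))(\eta_{B_0}(x_0))$. Tomando una base $\{u_i\}$ de $FB_0$ con base dual $\{u_i^\lor\}$ y aplicando la f\'ormula \eqref{etaenvec}, esto se reescribe como $\eta_C(x_j) = \sum_i \lambda_{B_0}(x_0 \otimes u_i^\lor) \otimes F(s)(u_i)$. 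Como $x_j \in E$ y $E$ es subcom\'odulo, $\eta_C(x_j) \in End^\lor(F) \otimes E$; componiendo con $End^\lor(F) \otimes \pi$, donde $\pi : FC \to FC/E$ es la proyecci\'on, obtengo la igualdad $\sum_i \lambda_{B_0}(x_0 \otimes u_i^\lor) \otimes \pi(F(s)(u_i)) = 0$ en $End^\lor(F) \otimes (FC/E)$.

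El paso delicado, que espero sea el obst\'aculo principal, es deducir de esta anulaci\'on que cada $F(s)(u_i) \in E$. Para ello basta ver que los coeficientes $\lambda_{B_0}(x_0 \otimes u_i^\lor)$ son linealmente independientes en $End^\lor(F)$, es decir que la aplicaci\'on lineal $FB_0^\lor \to End^\lor(F)$ dada por $\phi \mapsto \lambda_{B_0}(x_0 \otimes \phi)$ es inyectiva. En efecto, si $\lambda_{B_0}(x_0 \otimes \phi) = [x_0 \otimes \phi] = 0$, el lema anterior provee un subobjeto $s' : B' \hookrightarrow B_0$ que contiene a $x_0$ y con $F^\wedge(s')(\phi) = 0$; pero $B_0 = \langle x_0 \rangle$ es m\'{\i}nimo entre los subobjetos que contienen a $x_0$, luego $B'= B_0$, $s'$ es iso y $\phi = 0$. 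Siendo im\'agenes de una base por una aplicaci\'on inyectiva, los $\lambda_{B_0}(x_0 \otimes u_i^\lor)$ son independientes, y entonces la anulaci\'on de $\sum_i \lambda_{B_0}(x_0 \otimes u_i^\lor) \otimes \pi(F(s)(u_i))$ fuerza $\pi(F(s)(u_i)) = 0$ para todo $i$, esto es $F(s)(u_i) \in E$. Como los $F(s)(u_i)$ generan $\mathrm{Im}(F(s))$, concluyo $\mathrm{Im}(F(s)) \subseteq E$, lo que termina la afirmaci\'on central y, con ella, la prueba.
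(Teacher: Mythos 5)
Tu demostraci\'on es correcta, pero sigue una ruta genuinamente distinta de la del texto. El texto toma una base $\{e_1,\dots,e_n\}$ de $FC$ adaptada a $E$ (con $\{e_1,\dots,e_k\}$ base de $E$), lee la condici\'on de subcom\'odulo directamente en $C$ como la anulaci\'on de las clases $[e_j \otimes e_i^\wedge]$ para $1 \le j \le k < i \le n$, y aplica el lema anterior a cada una de esas clases para extraer subobjetos $B_{ij}$ de $C$ que contienen a $e_j$ y cumplen $e_i^\wedge \circ F(m_{ij}) = 0$; el subobjeto buscado es $B = \bigcup_{j\le k}\bigcap_{i>k} B_{ij}$, y las dos inclusiones se siguen de que $F$ preserva uniones e intersecciones junto con $E = \bigcap_{i>k}\mathrm{Ker}(e_i^\wedge)$. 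T\'u en cambio trabajas s\'olo con una base de $E$: pasas a los subobjetos generados $\langle x_j \rangle$, transfieres la estructura de com\'odulo a $F(\langle x_j \rangle)$ v\'{\i}a la naturalidad de la coevaluaci\'on (es decir, que $F(s_j)$ es morfismo de com\'odulos, lo cual est\'a probado en la secci\'on \ref{sec:levdeF}), y usas el lema anterior de otro modo: no para extraer subobjetos de clases nulas, sino para probar que $\phi \mapsto [x_0 \otimes \phi]$ es inyectiva cuando $x_0$ genera a su objeto ambiente (en esencia, la observaci\'on de los ``minimal models''), de donde la independencia lineal en $End^\lor(F)$ fuerza $\mathrm{Im}(F(s_j)) \subseteq E$. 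Ambos caminos descansan en el mismo lema previo y en la preservaci\'on de uniones por $F$, pero tu versi\'on a\'{\i}sla un enunciado intermedio limpio y reutilizable (el subobjeto generado por cualquier elemento de un subcom\'odulo cae dentro de \'el), al precio de invocar la naturalidad de $\eta$ y el cociente $FC/E$; la del texto es m\'as corta y puramente ``de bases'', sin necesidad de esos ingredientes.
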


\begin{proof}

Que $E$ sea un subcomódulo quiere decir que si \linebreak $\eta: C \rightarrow End^\lor(F) \otimes C$ es el coproducto de $C$, entonces se restringe a $\eta: E \rightarrow End^\lor(F) \otimes E$. Sea $\{e_1,...,e_n\}$ una base de $FC$ como $K$-e.v. tal que $\{e_1,...,e_k\}$ es una base de $E$. Como $\eta$ está dada por (ver \eqref{etaenvec})
$$\eta(v) = \sum_{i=1}^n [v \otimes e_i^\wedge] \otimes e_i $$
que se restrinja correctamente quiere decir que $[e_j \otimes e_i^\wedge] = 0$ \linebreak $\forall 1 \leq j \leq k < i \leq n$. Usando el lema anterior, obtenemos que existen subobjetos \xymatrix {B_{ij} \hbox{  } \ar@{>->}[r]^{m_{ij}} & C} tales que $e_j \in Im(F(m_{ij})) = F(m_{ij})(B_{ij})$ y $F^\wedge(m_{ij})(e_i^\wedge) = 0$, es decir $e_i^\wedge (F(m_{ij})) = 0$.

Tomando $B = \bigcup_{j \leq k} \bigcap_{i > k} B_{ij}$, tenemos el diagrama

\ \ \ \ \ \ \ \ \ \ \ \ \ \ \ \ \ \ \ \xymatrix { & B_{ij} \ar@{>->}[d]^{m_{ij}}			\\
						\bigcap_{i > k} B_{ij} \hbox{  } \ar@{>->}[ru]^{p_{ij}} \ar@{>->}[r]^{m_j} \ar@{>->}[rd]_{i} & C					\\
						& B \ar@{>.>}[u]_{m} 						}

donde las flechas $p_{ij}$ y $m_j$ se obtienen del pull-back, $i$ es la inclusión en la unión, y $m$ se obtiene pues la unión es el subobjeto más chico que contiene a los uniendos. Usando que como $F$ preserva monomorfismos (subobjetos) entonces preserva uniones e intersecciones, obtenemos el diagrama

\ \ \ \ \ \ \ \ \ \ \ \ \ \ \ \ \ \ \ \xymatrix { & F(B_{ij}) \ar@{>->}[d]^{F(m_{ij})}			\\
						\bigcap_{i > k} F(B_{ij}) \hbox{  } \ar@{>->}[ru]^{F(p_{ij})} \ar@{>->}[r]^{F(m_j)} \ar@{>->}[rd]_{F(i)} & FC \ar[r]^{e_i^\wedge} & K				\\
						& FB 	= \oplus_{j \leq k} \bigcap_{i > k} F(B_{ij}) \ar@{>.>}[u]_{F(m)} 						}

Luego, como $e_j \in F(m_{ij})(B_{ij})$, $e_j \in F(m_j)(\bigcap_{i > k} F(B_{ij}))$ $\forall j \leq k$, y por lo tanto $e_j \in F(m)(FB)$ $\forall j \leq k$. Concluimos que $E \subset F(m)(FB)$.

Por otro lado, como $e_i^\wedge \circ F(m_{ij}) = 0$ $\forall 1 \leq j \leq k < i \leq n$, obtenemos que $e_i^\wedge \circ F(m_j) = 0$ $\forall j \leq k$ y por lo tanto $e_i^\wedge \circ F(m) = 0$ $\forall i > k$. Eso nos dice que $F(m)(FB) \subset \bigcap_{i>k} Ker(e_i^\wedge) = E$ y obtenemos la igualdad.
\end{proof}

\begin{\de}
	Una subcategoría plena de una categoría abeliana se dice repleta si es cerrada por sumas directas finitas, subobjetos y cocientes.
\end{\de}


\begin{lemma} \label{Dat}
	Dada una coálgebra $C$, la asignación $C' \mapsto Comod_{<\infty}C'$ es una biyección entre las subcoálgebras de $C$ y las subcategorías repletas de $Comod_{<\infty}C$.
\end{lemma}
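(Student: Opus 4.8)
The plan is to build the inverse map by hand and to reduce everything to the theory of the coefficient coalgebra of a finite-dimensional comodule. Given a finite-dimensional $C$-comodule $M$ with coaction $\rho_M\colon M\to C\otimes M$, choosing a basis $\{m_i\}$ and writing $\rho_M(m_j)=\sum_i c_{ij}\otimes m_i$, I would set $C_M\subseteq C$ to be the subspace spanned by the $c_{ij}$. First I would check the standard (routine) facts: using coassociativity and the counit axiom, $C_M$ is a finite-dimensional subcoalgebra, $\rho_M$ corestricts to a map $M\to C_M\otimes M$, and $C_M$ is the smallest subcoalgebra with this property. In particular, for a subcoalgebra $C'\subseteq C$, a finite-dimensional $M$ is a $C'$-comodule if and only if $C_M\subseteq C'$. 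I would also record that $C_{M'}\subseteq C_M$ whenever $M'$ is a subobject or a quotient of $M$, and that $C_{M_1\oplus M_2}=C_{M_1}+C_{M_2}$.

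With this tool the two maps are: forward, $C'\mapsto Comod_{<\infty}(C')$, seen as a full subcategory of $Comod_{<\infty}(C)$ through $C'\hookrightarrow C$; backward, $\Dat\mapsto C_\Dat:=\sum_{M\in\Dat}C_M$, which is a subcoalgebra since a sum of subcoalgebras is one. The three closure properties above show at once that $Comod_{<\infty}(C')$ is replete, so the forward map is well defined.

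For the round trip starting from $C'$, the inclusion $C_{Comod_{<\infty}(C')}\subseteq C'$ is immediate; for the reverse I would use that the regular comodule $C'$ (coaction $\triangle$) is the directed union of its finite-dimensional subcomodules $V$, and that for $c\in V$ the counit identity $c=(1\otimes\varepsilon)\triangle(c)$ exhibits $c$ as a linear combination of the coefficients of $V$, hence $c\in C_V$. Thus $C'\subseteq\sum_V C_V=C_{Comod_{<\infty}(C')}$, giving equality. For the round trip starting from $\Dat$, the inclusion $\Dat\subseteq Comod_{<\infty}(C_\Dat)$ is clear, and the whole content is the reverse inclusion.

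The hard part will be proving that every finite-dimensional $C_\Dat$-comodule $N$ already belongs to $\Dat$. Since $C_N$ is finite-dimensional and contained in the (possibly infinite) sum $\sum_{M\in\Dat}C_M$, it lies in a finite subsum $C_{M_1}+\dots+C_{M_r}=C_{M_1\oplus\dots\oplus M_r}$; as $\Dat$ is closed under finite direct sums I may take a single $M\in\Dat$ with $C_N\subseteq C_M$, so that $N$ is a $C_M$-comodule. It then suffices to show that any finite-dimensional comodule over $C_M$ is a subquotient of some power $M^n$, for then repleteness of $\Dat$ forces $N\in\Dat$. To prove this I would dualize: $A:=(C_M)^\wedge$ is a finite-dimensional algebra whose modules are exactly the $C_M$-comodules, and minimality of $C_M$ translates precisely into $M$ being a faithful $A$-module, so the regular action gives an injection $A\hookrightarrow \mathrm{End}_K(M)\cong M^{\dim M}$ of $A$-modules. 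Since any finite-dimensional $A$-module $N$ is a quotient of a free module $A^k$, it is a subquotient of $M^{k\dim M}$, which finishes the argument.
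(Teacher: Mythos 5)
Your proof is correct, and its skeleton coincides with the paper's: your coefficient coalgebra $C_M$ is exactly the paper's $Im(\tilde{\alpha})$ (the image of $M\otimes M^\wedge\to C$ induced by the coaction), your inverse map $\Dat\mapsto\sum_{M\in\Dat}C_M$ is the paper's $\Dat\mapsto Im(\Dat)$, and for the inclusion $C'\subseteq C_{Comod_{<\infty}(C')}$ both arguments write the regular comodule $C'$ as a union of finite-dimensional subcomodules (the fact proved inside Prop.~\ref{rhosury}) and apply the counit identity to see that each element is a combination of its own coefficients. Where you genuinely diverge is in the hard inclusion $Comod_{<\infty}(C_\Dat)\subseteq\Dat$. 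The paper never leaves comodule territory: given $(W,\beta)$ it first shows every finite-dimensional subcomodule of the regular comodule $C'$ lies in $\Dat$ (each $Im(\tilde{\alpha})$ is a sum of quotients of $V\in\Dat$), then takes a basis $\{\phi_i\}$ of $W^\wedge$, forms the subcomodules $W_i=Im\bigl((C'\otimes\phi_i)\circ\beta\bigr)\subseteq C'$, and uses the counit axiom to show $W\to\oplus_i W_i$ is mono, so repleteness gives $W\in\Dat$. You instead reduce to a single $M\in\Dat$ with $C_N\subseteq C_M$ and dualize: $A=(C_M)^\wedge$ is a finite-dimensional algebra, minimality of $C_M$ is precisely faithfulness of $M$ as an $A$-module, and the regular representation $A\hookrightarrow \mathrm{End}_K(M)\cong M^{\dim M}$ exhibits every finite-dimensional $A$-module as a subquotient of a power of $M$. (All your steps check out; in particular $C_M$-subcomodules and $C$-subcomodules of a $C_M$-comodule agree, so the subquotients are taken in the right category.) Your route buys the sharper, classical Tannakian lemma that every finite-dimensional $C_M$-comodule is a subquotient of some $M^n$, with the core reduced to transparent linear algebra over a finite-dimensional algebra; the price is importing the equivalence $Comod_{<\infty}(D)\simeq Mod_{<\infty}(D^\wedge)$ for finite-dimensional $D$, which the paper does not develop. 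The paper's route stays self-contained in its own toolkit and reuses the same counit/monomorphism trick that it employed to prove injectivity of $\tilde{\rho}$, at the cost of a less explicit structural conclusion about individual comodules.
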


\begin{proof}
	Sea $V \in Comod_{<\infty}C$, con comultiplicación $\alpha: V \rightarrow C \otimes V$. Por la ley exponencial $\alpha$ induce $V \otimes V^\wedge \stackrel{\tilde{\alpha}}{\rightarrow} C$ definida como la composición
		$$ \tilde{\alpha} : V \otimes V^\wedge \stackrel{\alpha \otimes V^\wedge}{\rightarrow} C \otimes V \otimes V^\wedge \stackrel{C \otimes \varepsilon}{\rightarrow} C $$
	
	$V \otimes V^\wedge$ es una coálgebra con counidad $\varepsilon$ y comultiplicación \linebreak $V \otimes V^\wedge \stackrel{V \otimes \eta \otimes V^\wedge}{\rightarrow} V \otimes V^\wedge \otimes V \otimes V^\wedge$ claramente asociativa. Las igualdades triangulares de la dualidad nos dicen que $\varepsilon$ es counidad. Veremos que $\tilde{\alpha}$ es un morfismo de coálgebras. $\alpha$ hacía conmutar al cuadrado
	
	\ \ \ \ \ \ \ \ \ \ \ \ \ \ \ \ \ \ \ \xymatrix {V \ar[d]_{\alpha} \ar[r]^{\alpha} & C \otimes V \ar[d]^{C \otimes \alpha} \\
						 C \otimes V \ar[r]^{\triangle \otimes V} & C \otimes C \otimes V }
						 
	Apliquemos entonces la ley exponencial a ambas composiciones. Al aplicarla a la flecha $(\triangle \otimes V) \circ \alpha$ obtenemos el diagrama
	
\ \ \ \ \ \ \ \ \ \ \ \ \ \ \ \ \ \ \ \ \ \ \ \ \ \ \ \ \ \ \xymatrix@C=-0.2pc{ & & & & V \ar@{-}[lld] \ar@{-}[d] \ar@{}[ld]|{\alpha} & & V^\wedge \ar@2{-}[d] & & & & & & \\
								 & & C \ar@{-}[lld] \ar@{-}[d] \ar@{}[ld]|{\triangle} & & V \ar@2{-}[d] & & V^\wedge \ar@2{-}[d] & & & & & & \\	
								 C \ar@2{-}[d] & & C \ar@2{-}[d] & & V \ar@{-}[rd] & \ar@{}[d]|{\varepsilon} & V^\wedge \ar@{-}[ld] & & & & & \\							 
								 C & & C & & & & & & & & & }
						
	que resulta equivalente (juntando $\alpha$ con $\varepsilon$) a $\triangle \circ \tilde{\alpha}$.
	
	Al aplicar la ley exponencial a la flecha $(C \otimes \alpha) \circ \alpha$, obtenemos
	
\ \ \ \ \ \ \ \ \ \ \ \ \ \ \ \ \ \ \ \ \ \ \ \ \ \ \ \ \ \ \xymatrix@C=-0.2pc{ & & & & V \ar@{-}[lllld] \ar@{-}[d] \ar@{}[lld]|{\alpha} & & V^\wedge \ar@2{-}[d] & & & & & \\
								 C \ar@2{-}[d] & & & & V \ar@{-}[lld] \ar@{-}[d] \ar@{}[ld]|{\alpha} & & V^\wedge \ar@2{-}[d] & & & & & \\	
								 C \ar@2{-}[d] & & C \ar@2{-}[d] & & V \ar@{-}[rd] & \ar@{}[d]|{\varepsilon} & V^\wedge \ar@{-}[ld] & & & & & \\							 
								 C & & C & & & & & & & & & }
	
	Insertamos en el diagrama la segunda igualdad triangular de la dualidad a derecha de $V$ y obtenemos (ya reemplazamos el segundo $\alpha$ y el $\varepsilon$ por $\tilde{\alpha}$)
	
\ \ \ \ \ \ \ \ \ \ \ \ \ \ \ \ \ \ \ \ \ \ \ \ \ \ \ \ \ \ \xymatrix@C=-0.2pc{ & & V \ar@{-}[lld] \ar@{-}[d] \ar@{}[ld]|{\alpha} & & & & & & V^\wedge \ar@2{-}[d] & & & \\
								 C \ar@2{-}[d] & & V \ar@2{-}[d] & & & \ar@{-}[ld] \ar@{-}[rd] \ar@{}[d]|{\eta} & & & V^\wedge \ar@2{-}[d] & & & \\	
								 C \ar@2{-}[d] & & V \ar@{-}[rd] & \ar@{}[d]|{\varepsilon} & V^\wedge \ar@{-}[ld] & & V \ar@2{-}[d] & & V^\wedge \ar@2{-}[d] & & & \\							 
								 C \ar@2{-}[d] & & & & & & V \ar@{-}[rd] & \ar@{}[d]|{\tilde{\alpha}} & V^\wedge \ar@{-}[ld] & & & \\							 
								 C & & & & & & & C & & & & }
											
	Que reemplazando nuevamente $\alpha$ y $\varepsilon$ por $\tilde{\alpha}$ nos queda
	
\ \ \ \ \ \ \ \ \ \ \ \ \ \ \ \ \ \ \ \ \ \ \ \ \ \ \ \ \ \ \xymatrix@C=-0.4pc{ V \ar@2{-}[d] & & & \ar@{-}[ld] \ar@{-}[rd] \ar@{}[d]|{\eta} & & & V^\wedge \ar@2{-}[d] & & & & & \\
								 V \ar@{-}[rd] & \ar@{}[d]|{\tilde{\alpha}} & V^\wedge \ar@{-}[ld] & & V \ar@{-}[rd] & \ar@{}[d]|{\tilde{\alpha}} & V^\wedge \ar@{-}[ld] & & & & & \\	
								 & C & & & & C & & & & & & }
																					
	Luego obtenemos la conmutatividad del siguiente cuadrado 
	
	\ \ \ \ \ \ \ \ \ \ \ \ \ \ \ \ \ \ \ \xymatrix {V \otimes V^\wedge \ar[d]_{\tilde{\alpha}} \ar[rr]^{V \otimes \eta \otimes V^\wedge} & & V \otimes V^\wedge \otimes V \otimes V^\wedge \ar[d]^{\tilde{\alpha} \otimes \tilde{\alpha}} \\
						 C  \ar[rr]^{\triangle} & & C \otimes C }
						 
	que nos dice que $\tilde{\alpha}$ respeta la comultiplicación. Dejamos por verificar que respeta la counidad.
	
	Sea ahora $\Dat$ una subcategoría repleta de $Comod_{<\infty}C$. Definimos la aplicación inversa a la del enunciado de la proposición como $\Dat \mapsto Im(\Dat)$, donde $Im(\Dat) = \oplus_{(V,\alpha) \in \Dat} Im(\tilde{\alpha})$.
	
	Si $C'$ es una subcoálgebra de $C$, $\forall (V,\alpha) \in \Dat$, se tiene \linebreak $\tilde{\alpha}(V \otimes V^\wedge) \subset C' \Longleftrightarrow \alpha: V \rightarrow C' \otimes V$, de donde se deduce que \linebreak $Im(\Dat) \subset C' \Longleftrightarrow D \subset Comod_{<\infty} C'$. Luego $Im(Comod_{<\infty}C') \subset C'$ y $\Dat \subset Comod_{<\infty} (Im(\Dat))$. Nos quedan por verificar las otras dos inclusiones.
	
	Observemos que cualquier coálgebra es la unión de sus subcoálgebras de dimensión finita: para probar esto consideramos a la coálgebra $C$ como un $C^{op} \otimes C$-comódulo, donde $C^{op}$ es la coálgebra opuesta (ver \cite{JS}, 7, p.454, Proposition 1), luego sus subcomódulos coinciden con las subcoálgebras de $C$ y ya hemos probado (al demostrar la propiedad \ref{rhosury}) que dado un elemento de un comódulo se tiene un subcomódulo de dimensión finita que lo contiene. (También se puede consultar \cite{Sweedler}, theorem 2.2.1, p.46 para una demostración "`independiente"' de ese hecho). Luego, para probar que $C' \subset Im(Comod_{<\infty}C')$, bastará con ver que cualquier subcoálgebra $V$ de dimensión finita de $C'$ está contenida en $Im(Comod_{<\infty} C')$.
	
	Supongamos que la estructura de coálgebra de $V$ viene dada por $\triangle$ y $\varepsilon_V$ (para no confundirlo con el $\varepsilon$ de la dualidad). Tomando una base $\{v_1,...,v_n\}$ de $V$, escribimos $\triangle(v)= \sum_{i,j=1}^n \alpha_{ij} \cdot v_i \otimes v_j$ y que $\varepsilon_V$ sea counidad se puede expresar como $v = \sum_{i,j=1}^n \alpha_{ij} \cdot \varepsilon_V (v_j) \cdot v_i$. Luego, si $\tilde{\triangle}: V \otimes V^\lor \rightarrow V$ se obtiene de $\triangle$ por ley exponencial,
	$$\tilde{\triangle}(v \otimes \varepsilon_V) = (id_V \otimes \varepsilon) (\sum_{i,j=1}^n \alpha_{ij} \cdot v_i \otimes v_j \otimes \varepsilon_V) = \sum_{i,j=1}^n \alpha_{ij} \cdot \varepsilon_V(v_j) \cdot v_i  = v $$
	Por lo tanto $\tilde{\triangle}$ es suryectiva y $V \subset Im(Comod_{<\infty} C')$.
	
	Veamos ahora que $Comod_{<\infty} (Im(\Dat)) \subset \Dat$. Llamemos $C' = Im(\Dat)$. Sea $(V,\alpha) \in \Dat$. Como 
	$$ \tilde{\alpha} : V \otimes V^\wedge \rightarrow C $$
	$$ v \otimes \phi \mapsto (id_C \otimes \phi)(\alpha(v)) $$
	
	$Im(\tilde{\alpha})$ está generado por las imágenes de $V \stackrel{\alpha}{\rightarrow} C \otimes V \stackrel{C \otimes \phi}{\rightarrow} C$, con $\phi \in V^\wedge$ (por ejemplo una base). Luego, como $\Dat$ es cerrado por cocientes y $V / Ker( (id_C \otimes \phi) \circ \alpha) \cong Im( (id_C \otimes \phi) \circ \alpha)$, obtenemos que $Im(\tilde{\alpha}) \in \Dat$ y como $C' = \oplus_{(V,\alpha) \in \Dat} Im (\tilde{\alpha})$ y $\Dat$ es cerrada por sumas directas finitas, cualquier subcomódulo de $C'$ de dimensión finita pertenece a $\Dat$, y eso basta para probar que $C' \in \Dat$. Como $\Dat$ es también cerrada por subobjetos, cualquier subcomódulo de $C'$ también está en $\Dat$.
	
	Sea ahora entonces $(W,\beta) \in Comod_{<\infty} C'$, y $\{ \phi_1,...,\phi_n \}$ una base de $W^\wedge$. Consideramos las imágenes $W_i$ de las flechas $W \stackrel{\beta}{\rightarrow} C' \otimes W \stackrel{C' \otimes \phi_i}{\rightarrow} C'$. Los $W_i$ pertenecen a $\Dat$ pues son subcomódulos de $C'$, factorizando estas flechas tenemos $W \stackrel{\rho_i}{\rightarrow} W_i$ las imágenes. Las juntamos todas en $W \stackrel{\oplus \rho_i}{\rightarrow} \oplus W_i$, luego como $\Dat$ es completa basta ver que $\oplus \rho_i$ es un monomorfismo. En efecto, si $w \mapsto \oplus 0$, entonces como $\{\phi_1,...\phi_n\}$ es una base de $W^\wedge$ obtenemos que $\beta(w) = 0$, pero como $\beta$ respeta la counidad de $C'$ tenemos que $\varepsilon \circ \beta$ es un isomorfismo lo que implica que $\beta$ es mono y que $w = 0$.	
\end{proof}

\subsection{Demostración de los teoremas}

En esta sección demostraremos lo que nos resta de los teoremas fundamentales en el caso $\Vat = Vec_K$, usando las herramientas y lemas de la sección anterior.

\begin{\te} 
	Sea $C$ una $K$-coálgebra y $U: Comod_{<\infty}C \rightarrow Vec_K^{<\infty}$ el funtor de olvido. Entonces $\tilde{\rho}: End^\lor(U) \rightarrow C$ es inyectivo. Como ya hemos probado que es un morfismo de $K$-coálgebras y que es suryectivo, resulta entonces un isomorfismo de $K$-coálgebras.
\end{\te}

\begin{proof}
	
	Consideremos $\lambda_V(v, \phi) \in End^\lor(U)$. Por definición, otro \linebreak $C$-comódulo $M$ soporta a $\phi$ si tenemos un epimorfismo de comódulos $V \stackrel{f}{\rightarrow} W$ tal que $\phi \in Im(M^\lor \stackrel{f^\lor}{\rightarrow}	V^\lor)$. Por definición de la adjunta, esto es si y sólo si existe $\psi \in M^\lor$ tal que $\phi = \psi \circ f$. Luego, el siguiente diagrama conmuta
	
	\ \ \ \ \ \ \ \ \ \ \ \ \xymatrix { V \ar[r]^{\rho_V} \ar@{->>}[rdd]_{f} & C \otimes V \ar[rr]^{C \otimes \phi} & & C \\
							& & C \otimes M \ar[ru]_{C \otimes \psi} \\
							& M \ar[ru]_{\rho_M} }

	Tomamos ahora factorización epi-mono de la flecha $(C \otimes \phi) \circ \rho_V$ y luego de la flecha $(C \otimes \psi) \circ \rho_W$. Por unicidad de la factorización epi-mono, el diagrama debe quedar

	\ \ \ \ \ \ \ \ \ \ \ \ \xymatrix { V \ar[r]^{\rho_V} \ar@{->>}[rdd]^{f} \ar@/_1pc/@{->>}[rddd]_{e} & C \otimes V \ar[rr]^{C \otimes \phi} & & C \\
							& & C \otimes M \ar[ru]^{C \otimes \psi} \\
							& M \ar[ru]^{\rho_M} \ar@{->>}[d] \\
							& A \ar@/_2pc/@{>->}[rruuu]_{m}  }

	Por lo tanto $A$ es más chico que cualquier subobjeto de $V$ que soporte a $\phi$. Para ver que $A = \ > \! \! \phi \! \! <$, tenemos que definir $\psi \in A^\lor$ tal que \linebreak $\psi(e(v)) = \phi(v)$ para todo $v \in V$. La única definición posible es la siguiente, dado $a \in A$, existe $v$ tal que $a = e(v)$, luego definimos $\psi(a) = \phi(v)$. Para ver la buena definición, debemos ver que si $e(v) = 0$, entonces $\phi(v) = 0$. Por la conmutatividad del diagrama, tenemos que $(C \otimes \phi) \circ \rho_V (v) = 0$. Si escribimos $\rho_V(v) = \displaystyle \sum_i e_i \otimes v_i$, entonces tenemos que $\displaystyle \sum_i \phi(v_i) \cdot e_i = 0$. Aplicando la counidad $\varepsilon$, obtenemos $\displaystyle \sum_i \phi(v_i) \cdot \varepsilon(e_i) = 0$. Pero como $\rho_V$ respeta la counidad, tenemos que $v = \displaystyle \sum_i \varepsilon(e_i) \cdot v_i$, luego $\phi(v) = \displaystyle \sum_i \varepsilon(e_i) \cdot \phi(v_i) = 0$.
	
	Luego, si $V = \ > \! \! \phi \! \! <$, la flecha $(C \otimes \phi) \circ \rho_V$ es inyectiva. Esta asunción la podemos realizar pues cada elemento de $End^\lor(U)$ tiene un minimal model cuya clase coincide con él. Supongamos entonces que $\tilde{\rho}(\lambda_V(v,\phi))=0$. Entonces, por el diagrama \eqref{defderho}, tenemos que si $\rho_V(v) = \displaystyle \sum_i e_i \otimes v_i$ al igual que antes, entonces $\displaystyle \sum_i e_i \otimes \phi(v_i) = 0$. Pero este valor coincide con $(C \otimes \phi) \circ \rho_V (v)$, luego $v = 0$ y entonces $\lambda_V(v,\phi)=0$. Por lo tanto $\tilde{\rho}$ es inyectiva.
\end{proof}

\begin{\te} \label{Teo2'}
	Sean $\Cat$ una categoría abeliana y enriquecida en $Vec_K$, y \linebreak $F: \Cat \rightarrow Vec_K^{<\infty}$ un $Vec_K$-funtor exacto y fiel. Entonces el levantamiento $\tilde{F}$ descripto al comienzo de esta sección es una equivalencia de categorías. 
\end{\te}

\begin{proof}

Hay que ver que $\tilde{F}$ es pleno (obviamente es fiel) y que es cuasisuryectivo en los objetos. Para ver que es pleno, tomemos \linebreak $f: \tilde{F}(A) = FA \rightarrow \tilde{F}(B) = FB$ un morfismo de $End^\lor(F)$-comódulos. Eso significa que el diagrama

	\ \ \ \ \ \ \ \ \ \ \ \ \ \ \ \ \ \ \ \xymatrix { FA \ar[d]_{\eta_A} \ar[rr]^{f} & & FB \ar[d]^{\eta_B} \\
							 End^\lor(F) \otimes FA \ar[rr]^{End^\lor(F) \otimes f} & & End^\lor(F) \otimes FB}
							
conmuta. El gráfico de $f$ es la imagen de la flecha \linebreak $G_f: FA \stackrel{\delta}{\rightarrow} FA \oplus FA \stackrel{FA \oplus f}{\rightarrow} FA \oplus FB$. Veamos que esta flecha es morfismo de $End^\lor(F)$-comódulos. En efecto, el diagrama anterior nos da la conmutatividad del siguiente diagrama

\footnotesize
	\xymatrix@C=0.1pc { FA \ar[r]^{\delta} \ar[dd]_{\eta_A} & FA \oplus FA \ar[d]^{\eta_A \oplus \eta_A} \ar[r]^{FA \oplus f} & FA \oplus FB \ar[d]^{\eta_A \oplus \eta_B} \\
																			& (End^\lor(F) \otimes FA) \oplus (End^\lor(F) \otimes FA) \ar@2{-}[d] & (End^\lor(F) \otimes FA) \oplus (End^\lor(F) \otimes FB) \ar@2{-}[d] \\
																			End^\lor(F) \otimes FA \ar[r]^{End^\lor(F) \otimes \delta} & End^\lor(F) \otimes (FA \oplus FA) \ar[r]^{End^\lor(F) \otimes f} & End^\lor(F) \otimes (FA \oplus FB) }
\normalsize

que muestra que $G_f$ es morfismo de $End^\lor(F)$-comódulos y por lo tanto el gráfico de $f$ es un subcomódulo de $FA \oplus FB \cong F(A \oplus B)$. Luego, por el lema \ref{subobjeto}, existe \xymatrix { C \hbox{  } \ar@{>->}[r]^{s} & A \oplus B } tal que el gráfico de $f$ es la imagen de $F(s)$. Tomemos 
$$ i: C \stackrel{s}{\rightarrow} A \oplus B \stackrel{p_1}{\rightarrow} A $$
Al aplicar $F$ obtenenemos
$$ F(i): FC \stackrel{F(s)}{\rightarrow} A \oplus B \stackrel{\pi_1}{\rightarrow} A $$
$$ c \mapsto (a,f(a)) \mapsto a $$
Como la imagen de $F(s)$ es el gráfico de $f$, $F(i)$ resulta un isomorfismo, luego $Ker(F(i))=Coker(F(i))=0$, pero como $F$ preserva núcleos y conúcleos y es fiel, $Ker(i)=Coker(i)=0$ y luego $i$ es un isomorfismo. Tomando entonces
$$ u: A \stackrel{i^{-1}}{\rightarrow} C \stackrel{s}{\rightarrow} A \oplus B \stackrel{p_2}{\rightarrow} B $$
Al aplicar $F$ nos queda
$$ F(u): FA \stackrel{F(i)^{-1}}{\rightarrow} FC \stackrel{F(s)}{\rightarrow} FA \oplus FB \stackrel{\pi_2}{\rightarrow} FB $$
$$ a \mapsto c \mapsto (a,f(a)) \mapsto f(a) $$
Por lo tanto $F(u)=f$.

Resta ver que cualquier objeto de $Comod_{<\infty}(End^\lor(F))$ es isomorfo a algún $\tilde{F}(C)$. Sea $\Dat$ la subcategoría plena de los comódulos que son isomorfos a algún $\tilde{F}(C)$. El lema \ref{subobjeto} nos dice que $\Dat$ es cerrada por subobjetos, un argumento dual muestra que es cerrada por cocientes, y como $F$ es exacto es cerrado por sumas directas. Luego $\Dat$ es repleta y entonces, por el lema \ref{Dat}, existe una subcoálgebra $C' \stackrel{i}{\hookrightarrow} End^\lor(F)$ tal que $\Dat = Comod_{<\infty} C'$. Esto implica que para todo $A \in \Cat$, como $FA \in \Dat$ entonces $FA$ es también un $C'$-comódulo, o sea tenemos el diagrama conmutativo

\ \ \ \ \ \ \ \ \ \ \ \ \ \ \ \ \ \ \ \xymatrix { FA \ar[dr]_{\eta_A} \ar[rr]^{\eta_A} & & C' \otimes FA \ar[dl]^{i \otimes FA} \\
							& End^\lor(F) \otimes FA }
							
Pero por la adjunción de la propiedad \ref{adjuncionpredual}, dada la estructura de comódulo $F \stackrel{\eta}{\rightarrow} C' \otimes F$ existe una única $End^\lor(F) \stackrel{\psi}{\rightarrow} C'$ tal que el siguiente diagrama conmuta

\ \ \ \ \ \ \ \ \ \ \ \ \ \ \ \ \ \ \ \xymatrix { F \ar@2{->}[dr]_{\eta} \ar@2{->}[rr]^{\eta} & & End^\lor(F) \otimes F \ar@2{->}[dl]^{i \otimes F} \\
							& C' \otimes F }
							
Luego $i \circ \psi$ hace conmutar al diagrama 

\ \ \ \ \ \ \ \ \ \ \ \ \ \ \ \ \ \ \ \xymatrix { F \ar@2{->}[dr]_{\eta} \ar@2{->}[rr]^{\eta} & & End^\lor(F) \otimes F \ar@2{->}[dl]^{(i \circ \psi) \otimes F} \\
							& End^\lor(F) \otimes F }

pero las t.n. que hacen esto también son únicas (por la misma adjunción considerando ahora la estructura de $End^\lor(F)$-comódulo para los $FA$) y la identidad también lo cumple. Luego $i \circ \psi = id_{End^\lor(F)}$ y luego como $i$ era una inclusión se deduce que $C' = End^\lor(F)$ y el funtor $F$ es cuasisuryectivo.

\end{proof}

\pagebreak
\section{Del Otro Lado de la Dualidad} \label{sec:OtroLadoDualidad}

\subsection{El caso de las álgebras simétricas}

Sea como antes $\Vat$ una categoría tensorial simétrica cocompleta con hom internos, y consideremos la subcategoría plena $Alg^{Sim}_\Vat$ de las álgebras conmutativas. Cuando miremos un objeto $A$ de $Alg^{Sim}_\Vat$ en ${Alg^{Sim}_\Vat}^{op}$ lo notaremos $Spec(A)$, $\bar{A}$, o bien $G$ pues veremos próximamente que en el caso que nos interesa es un objeto grupo.

Recordemos que, en una categoría $\Cat$ con objeto terminal $1$ y productos finitos, un objeto $G$ de $\Cat$ es un objeto grupo si se tienen flechas multiplicación $\mu: G \times G \rightarrow G$ asociativa, unidad $\eta: 1 \rightarrow G$ e inversa $\zeta: G \rightarrow G$ tales que los siguientes diagramas conmutan:

Asoc. \xymatrix{ G \times G \times G \ar[d]_{\mu \times G} \ar[r]^{G \times \mu} & G \times G \ar[d]^{\mu} \\
						G \times G \ar[r]^{\mu} & G} 
\ \ \ Unid. \xymatrix{ 1 \times G \ar[dr]_{\cong} \ar[r]^{\eta \times G} & G \times G \ar[d]^{\mu} & G \times 1 \ar[l]_{G \times \eta} \ar[dl]^{\cong} \\
							& G } 
							
Inv.
\xymatrix{ G \ar[d] \ar[r]^{\delta} & G \times G \ar@/^/[r]^{G \times \zeta} \ar@/_/[r]_{\zeta \times G} & G \times G \ar[d]^{\mu} \\
					 1 \ar[rr]^{\eta} & & G } 
\\ donde $\delta$ es la diagonal.

Como se vio en la propiedad \ref{AlgSimTens}, la categoría $Alg^{Sim}_\Vat$ hereda el producto tensorial $\otimes$ de $\Vat$, con la propiedad extra de que éste resulta un coproducto. Luego, el producto en ${Alg^{Sim}_\Vat}^{op}$ corresponderá al coproducto $\otimes$ de $Alg^{Sim}_\Vat$. Además la antípoda, que suele ser un antimorfismo de álgebras, es también un morfismo de álgebras si estas son conmutativas. Tenemos entonces las siguientes equivalencias:

\vspace{1ex}

		\begin{tabular}{|c|c|}
		\hline
			$Alg^{Sim}_\Vat$ &			${Alg^{Sim}_\Vat}^{op}$ \\
		\hline
			$I$ es objeto inicial, con flechas $I \stackrel{u}{\rightarrow} A$  & 			$\bar{I}$ es objeto terminal \\
		\hline
			una comultiplicación & una multiplicación \\
		$A \stackrel{\triangle}{\rightarrow} A \otimes A$ coasociativa & $\bar{A} \times \bar{A} \stackrel{\mu}{\rightarrow} \bar{A}$ asociativa \\
		\hline
			una counidad $A \stackrel{\varepsilon}{\rightarrow} I$ & una unidad $\bar{I} \stackrel{\eta}{\rightarrow} \bar{A}$ \\
		\hline
			$\therefore$ $B$ una biálgebra & $\bar{B}$ un monoide \\
		\hline
			una antípoda $B \stackrel{a}{\rightarrow} B$ & una inversa $\bar{B} \stackrel{\zeta}{\rightarrow} \bar{B}$ \\
		\hline
			$\therefore$ $B$ un álgebra de Hopf & $\bar{B}$ un objeto grupo \\
		\hline
		\end{tabular}

\vspace{1ex}

Se sugiere al lector mirar los tres diagramas anteriores dando vuelta las flechas para obtener los diagramas de coasociatividad, counidad y antípoda.

Notamos que, en caso clásico en el que $\Vat = Vec_K$ y $\Vat_0 = Vec_K^{<\infty}$, la categoría ${Alg^{Sim}_\Vat}^{op}$ es la de los esquemas afines, y un objeto grupo en esta categoría es un esquema grupo ("`group scheme"'). Remitimos al lector interesado a \cite{Hartshorne}, entre otras muchas opciones existentes.

\subsection{El objeto grupo $G=Spec(End^\lor(F))$}

El objetivo ahora es generalizar lo hecho en la sección anterior al caso en el que el álgebra de Hopf no es simétrica como álgebra. Si $\Cat$ es una categoría tensorial con una dualidad a derecha y $F: \Cat \rightarrow \Vat_0$ es un funtor tensorial, hemos visto que $End^\lor(F)$ es un álgebra de Hopf. Sin embargo, para probar que $End^\lor(F)$ es simétrica como álgebra necesitaríamos la hipótesis de que $\Cat$ sea simétrica y que $F$ respete esta simetría, es decir que $F(\psi_{C,D}) = \psi_{FC,FD}$, pues para que $m = m \circ \psi$ necesitamos la conmutatividad del siguiente rombo que se obtiene en ese caso por coend

\xymatrix{	& FC \otimes FD \otimes FD^\wedge \otimes FC^\wedge \ar[dr]_{\lambda_{C \otimes D}} \\
				FD \otimes FC \otimes FD^\wedge \otimes FC^\wedge \ar[ur]_{\psi \otimes FD^\wedge \otimes FC^\wedge} \ar[dr]^{FD \otimes FC \otimes \psi^\wedge} & & End^\lor(F) \\
						& FD \otimes FC \otimes FC^\wedge \otimes FD^\wedge \ar[ur]^{\lambda_{D \otimes C}} }

Lo que haremos para evitar agregar esas hipótesis es dar la siguiente definición "`à la Grothendieck"' de objeto grupo.

\begin{\de}
	Sea $\Cat$ una categoría. Consideramos el funtor de Yoneda $$ \Cat \stackrel{h}{\hookrightarrow} \Ens^{\Cat ^{op}} $$ $$ C \mapsto [-,C] $$
	Un objeto $C$ de $\Cat$ es un objeto grupo si $h(C)$ es un objeto grupo en la categoría $\Ens^{\Cat ^{op}}$.
\end{\de}

\begin{remark}
	Que $h(C)$ sea un objeto grupo en la categoría $\Ens^{\Cat ^{op}}$ es equivalente a que el funtor $h(C)$ se levante a $\Gr^{\Cat ^{op}}$, es decir que $[A,C]$ sea un grupo para todo $A$ y que $[A',C] \stackrel{f^*}{\rightarrow} [A,C]$ sea un morfismo de grupos para toda $A \stackrel{f}{\rightarrow} A'$.
\end{remark}

Tenemos entonces la siguiente propiedad

\begin{\prop}
	Sea $B$ un álgebra de Hopf en $\Vat$. Entonces $\bar{B}$ (al que también notaremos $Spec(B)$ o $G$) es un objeto grupo en $Alg_\Vat^{op}$.
\end{\prop}

\begin{proof}
	Debemos ver que $[B,A]$ es un grupo para cada $A$ álgebra en $\Vat$ y que $[B,A] \stackrel{f_*}{\rightarrow} [B,A']$ es un morfismo de grupos para cada $A \stackrel{f}{\rightarrow} A'$ morfismo de álgebras. La estructura de grupo que tomamos en $[B,A]$ es el producto de convolución $f*g = m \circ (f \otimes g) \circ \triangle$, que ya hemos visto en la sección \ref{BialgyHopf} que tiene a $B \stackrel{\epsilon}{\rightarrow} I \stackrel{u}{\rightarrow} A$ como unidad.

	El diagrama que expresa que $B \stackrel{a}{\rightarrow} B$ es antípoda, compuesto con un morfismo de álgebras $B \stackrel{f}{\rightarrow} A$ nos queda
	
\ \ \ \ \ \ \ \ \ \ \ \ \ \ \ \ \ \ \ \ \ \ \ \ \ \ \xymatrix @C=0.5pc {	& B \otimes B \ar[rr]^{f \otimes (f \circ a)} & & A \otimes A \ar[dr]^{m} \\
						B \ar[ur]^{\triangle} \ar[drr]_{\varepsilon} & & & & A \\
						& & I \ar[urr]_{u}  }
	
	y nos dice entonces que la inversa de $f$ para $*$ es $f \circ a$.
	
	Por último, si $A \stackrel{f}{\rightarrow} A'$ es un morfismo de álgebras, que $f$ respete la unidad $u$ nos dice que $f_*$ respeta la unidad del grupo, y que $f$ respete la multiplicación $m$ nos dice que $f_*$ respeta el producto $*$.
\end{proof}

Luego, obtenemos la siguiente versión de la propiedad \ref{EndesHopf}:

\begin{\prop}
	Sea $\Cat$ una categoría tensorial con una dualidad a derecha y \linebreak $F: \Cat \rightarrow \Vat_0$ un funtor tensorial. Entonces $G=Spec(End^\lor(F))$ es un objeto grupo en la categoría ${Alg_\Vat}^{op}$.
\end{\prop}

Nos interesa ahora interpretar el levantamiento de $F$ realizado en la sección \ref{sec:levdeF}, del lado "`geométrico"' de la dualidad, es decir en $Alg_\Vat^{op}$. 

\subsection{Representaciones de un objeto grupo}

Sea $B$ un álgebra de Hopf en $\Vat$, $V$ un objeto de $\Vat_0$ (podría ser de $\Vat$ también), y $G=Spec(B)$ el objeto grupo en ${Alg_\Vat}^{op}$. Vamos a definir las representaciones (a derecha) de $G$ en $V$.

Vía el funtor de Yoneda, tenemos $G \in \Gr^{Alg_\Vat}$ que queda definido como $G=[B,-]$. Para definir las representaciones a derecha, debemos considerar al objeto grupo $G^{op}$ que difiere de $G$ en que su multiplicación viene dada por $f\bar{*}g = m \circ (g \otimes f) \circ \triangle$.



Con respecto a $V$, para cualquier álgebra $A$ en $\Vat$ tenemos la adjunción "`extensión por escalares"'

\ \ \ \ \ \ \ \ \ \ \ \ \ \ \ \ \ \ \ \ \xymatrix@C=5pc{ \Vat \ar@/^1.5pc/[r]^{A \otimes (-)} \ar@{}[r]|{\bot} & A\hbox{-}Mod \ar@/^1.5pc/[l]^{U} }

donde el producto escalar de $A \otimes V$ viene dado por $A \otimes A \otimes V \stackrel{m}{\rightarrow} A \otimes V$

En efecto, podemos ver la biyección entre las flechas

\begin{equation} \label{biyeccionAV}
\xymatrix { {}  \ar@<-4ex>@{-}[rrr] & A \otimes V \ar[r]^>>>>{f} & M \hbox{ morfismo de } A \hbox{-}Mod & {} \ar@/^2pc/[d]^{\theta}\\
						{} \ar@/^2pc/[u]^{\theta} & V \ar[r]^>>>>>>>>>>{g} & M \hbox{ flecha en } \Vat & {} }
\end{equation}

donde $\theta(f)= V \stackrel{u \otimes V}{\rightarrow} A \otimes V \stackrel{f}{\rightarrow} M$ y $\theta(g) = A \otimes V \stackrel{A \otimes g}{\rightarrow} A \otimes M \stackrel{\gamma}{\rightarrow} M$

Se verifica que $\theta \circ \theta (f) = A \otimes V \stackrel{A \otimes u \otimes V}{\rightarrow} A \otimes A \otimes V \stackrel{A \otimes f}{\rightarrow} A \otimes M \stackrel{\gamma}{\rightarrow} M$, como $\gamma$ es morfismo de $A$-módulos, es igual a 
$$A \otimes V \stackrel{A \otimes u \otimes V}{\rightarrow} A \otimes A \otimes V \stackrel{m \otimes V}{\rightarrow} A \otimes V \stackrel{f}{\rightarrow} M$$
que es igual a $f$ pues $u$ es unidad para $m$.

También tenemos $\theta \circ \theta (g) = V \stackrel{u \otimes V}{\rightarrow} A \otimes V \stackrel{A \otimes g}{\rightarrow} A \otimes M \stackrel{\gamma}{\rightarrow} M$, que por funtorialidad del producto tensorial es igual a
$V \stackrel{g}{\rightarrow} M \stackrel{u \otimes V}{\rightarrow} A \otimes M \stackrel{\gamma}{\rightarrow} M$, que es igual a $g$ pues $u$ también es unidad para $\gamma$.

En otras palabras, tenemos que un morfismo de $A$-módulos $A \otimes V \stackrel{\phi}{\rightarrow} M$ queda determinado por su restricción a $V$, es decir

\ \ \ \ \ \ \ \ \ \ \ \ \xymatrix { V \ar[r]^{u \otimes V} \ar[dr]_{\forall} & A \otimes V \ar[d]^{\phi} & A \otimes V \ar@{.>}[d]^{\exists ! \phi}_{\Leftarrow} \\
							& M & M }



Luego podemos definir $GL_V (A) = Aut_{A-Mod} (A \otimes V)$, es decir $GL_V$ en un $A$ es el $GL$ (es decir el grupo de automorfismos) de $V$ visto en los \linebreak $A$-módulos. Así obtenemos el valor en los objetos de un funtor \linebreak $GL_V \in \Ens^{Alg_\Vat}$. Veamos su definición en las flechas: sea $f: A \rightarrow A'$ un morfismo de álgebras, dado $\phi \in Aut_A(A \otimes V)$ debemos definir \linebreak $GL_V(f)(\phi) \in Aut_{A'}(A' \otimes V)$. Como estas flechas quedan definidas por su restricción a $V$, obtenemos el diagrama

\begin{equation} \label{diagramaGL}
\xymatrix @C=4pc { V \ar[r]^{u \otimes V} \ar[dr]_{u \otimes V} & A \otimes V \ar[r]^{\phi} \ar[d]^{f \otimes V} & A \otimes V \ar[d]^{f \otimes V} \\
								& A' \otimes V \ar@{.>}[r]^{\exists ! GL_V(f)(\phi)} & A' \otimes V }
\end{equation}	
							
que define $GL_V(f)(\phi)$. Notemos que $GL_V(f)(\phi)$ está definida como la única flecha que hace conmutar al diagrama exterior en \eqref{diagramaGL} (es decir el trapecio), pero a priori no tendría por qué hacer conmutar al cuadrado de la derecha. Sin embargo el siguiente lema (usado en el caso $V=W$, $\varphi=id$) nos dice que las dos condiciones son equivalentes y por lo tanto vamos a poder pensar también que $GL_V(f)(\phi)$ está definida como la única flecha que hace conmutar al cuadrado.

\begin{lemma} \label{conmutadesdeV}
	Sean $f: A \rightarrow A'$ un morfismo de álgebras en $\Vat$, \linebreak $\varphi: V \rightarrow W$ una flecha en $\Vat$, $\phi: A \otimes V \rightarrow A \otimes V$ un morfismo de \linebreak $A$-módulos y $\phi': A' \otimes W \rightarrow A' \otimes W$ un morfismo de $A'$-módulos. Entonces, en el diagrama 
	
	\ \ \ \ \ \ \ \ \ \ \ \ \ \ \ \xymatrix @C=4pc { V \ar[r]^{u \otimes V} \ar[d]_{\varphi} & A \otimes V \ar[r]^{\phi} \ar[d]^{f \otimes \varphi} & A \otimes V \ar[d]^{f \otimes \varphi} \\
								W \ar[r]^{u \otimes W} & A' \otimes W \ar[r]^{\phi'} & A' \otimes W }
	
	si el rectángulo exterior conmuta entonces el cuadrado de la derecha conmuta. 
\end{lemma}

\begin{proof}

	Teniendo en cuenta cómo construimos la biyección en \eqref{biyeccionAV}, multiplicamos al rectángulo por $A$ a izquierda y realizamos varias composiciones para obtener
	
\footnotesize
\xymatrix @C=4pc { A \otimes V \ar@/_4pc/[dd]^{f \otimes \varphi} \ar[d]^{A \otimes \varphi} \ar[r]^{A \otimes u \otimes V} \ar@/^2pc/[rrr]^{\phi} & A \otimes A \otimes V \ar[r]^{A \otimes \phi} & A \otimes A \otimes V \ar[d]^{A \otimes f \otimes \varphi} \ar[r]^{m \otimes V} \ar@/^5pc/[dd]^{f \otimes f \otimes \varphi} & A \otimes V \ar@/^1pc/[dd]^{f \otimes \varphi} \\
								A \otimes W \ar[r]^{A \otimes u \otimes W} \ar[d]^{f \otimes W} & A \otimes A' \otimes W \ar[r]^{A \otimes \phi'} & A \otimes A' \otimes W \ar[d]^{f \otimes A' \otimes W}  \\
								A' \otimes W \ar[r]^{A' \otimes u \otimes W} \ar@/_2pc/[rrr]^{\phi'} & A' \otimes A' \otimes W \ar[r]^{A' \otimes \phi'} & A' \otimes A' \otimes W \ar[r]^{m \otimes W} & A' \otimes W }
\normalsize
								
\vspace{2ex}

	La conmutatividad de todos los diagramas pequeños nos da entonces la conmutatividad del diagrama exterior, que es el cuadrado deseado.
\end{proof}

Veamos que el funtor $GL_V$ respeta las identidades y la composición, luego tendremos gratis que $GL_V(f)(\phi)$ es un automorfismo. Ambas propiedades son válidas por la unicidad de $GL_V(f)(\phi)$ para hacer conmutativo el cuadrado de \eqref{diagramaGL}, mirando los diagramas

ident: \xymatrix { A \otimes V \ar[r]^{A \otimes V} \ar[d]^{f \otimes V} & A \otimes V \ar[d]^{f \otimes V} \\
						A' \otimes V \ar@{.>}[r]^{A' \otimes V} & A' \otimes V }
\ \ \ comp: \xymatrix @C=3pc { A \otimes V \ar[rr]^{\phi} \ar[d]^{f \otimes V} & & A \otimes V \ar[d]^{f \otimes V} \\
						A' \otimes V \ar@{.>}[rr]^{GL_V(f)(\phi)} \ar[d]^{f' \otimes V} & & A' \otimes V \ar[d]^{f' \otimes V} \\
						A'' \otimes V \ar@{.>}[rr]^{GL_V(f')(GL_V(f)(\phi))} & & A'' \otimes V 	}
						
Como $GL_V (A)$ es un grupo para todo $A$ y $GL_V(f)$ es un morfismo de grupos para toda $f$, tenemos que $GL_V \in \Gr^{Alg_\Vat}$.

Observemos también que la conmutatividad del rectángulo

\ \ \ \ \ \ \ \ \ \ \ \ \xymatrix @C=4pc {A \otimes V \ar[r]^{\phi} \ar[d]_{f \otimes V} & A \otimes V \ar[d]^{f \otimes V} \ar[r]^{\phi'} & A \otimes V \ar[d]^{f \otimes V} \\
					 A' \otimes V \ar[r]^{GL_V(f)(\phi)}  & A' \otimes V \ar[r]^{GL_V(f)(\phi')} & A' \otimes V }

nos da la igualdad

\begin{equation} \label{GLV}
	GL_V(f)(\phi' \circ \phi) = GL_V(f)(\phi') \circ GL_V(f)(\phi) 
\end{equation}

Todo esto nos permite realizar la siguiente

\begin{\de}
	Sea $B$ un álgebra de Hopf en $\Vat$, $V$ un objeto de $\Vat_0$, y $G=Spec(B)$ el objeto grupo en ${Alg_\Vat}^{op}$. Una representación a derecha de $G$ en $V$ es una tranformación natural $G^{op} \stackrel{\theta}{\Rightarrow} GL_V$ en $\Gr^{Alg_\Vat}$. 
	
Para cada álgebra $A$ en $\Vat$, que $\theta_A$ sea morfismo de grupos significa que para todo par de morfismos de álgebras $f, g: B \rightarrow A$, se tiene 
$$\theta_A(B \stackrel{\varepsilon}{\rightarrow} I \stackrel{u}{\rightarrow} A) = id_{A \otimes V} \hbox {\ \ \ \ y \ \ \ \ } \theta_A(f) \circ \theta_A(g) = \theta_A(m \circ (g \otimes f) \circ \triangle)$$

\end{\de}

Pasamos ahora a definir los morfismos de representaciones. Notemos que, dadas $\theta: G \Rightarrow GL_V$ y $\theta': G \Rightarrow GL_W$ dos representaciones de $G$, para cada álgebra $A$ en $\Vat$ lo que tenemos son dos representaciones del grupo $[B,A]$ en los $A$-módulos $A \otimes V$ y $A \otimes W$. Notemos también que una flecha $\varphi: V \rightarrow W$ en $\Vat$ nos da el morfismo de $A$-módulos $A \otimes \varphi: A \otimes V \rightarrow A \otimes W$. Realizamos entonces la siguiente definición:

\begin{\de}
	Con la notación del párrafo anterior, un morfismo entre las re- presentaciones $\theta$ y $\theta'$ es una flecha $\varphi: V \rightarrow W$ tal que para cada álgebra $A$ en $\Vat$, el morfismo de $A$-módulos $A \otimes \varphi$ es un morfismo entre las representaciones $\theta_A$ y $\theta_A'$. Es decir, tal que para todo morfismo de álgebras $a: B \rightarrow A$ el siguiente cuadrado conmuta.
	
\begin{equation} \label{morfrepr}
\xymatrix { A \otimes V \ar[r]^{\theta_A(a)} \ar[d]_{A \otimes \varphi} & A \otimes V \ar[d]^{A \otimes \varphi} \\
						A \otimes W \ar[r]^{\theta_A'(a)} & A \otimes W }
\end{equation}
\end{\de}

Observemos que por el lema \ref{conmutadesdeV} (en el caso $A=A'$, $f=id$), que el diagrama \eqref{morfrepr} conmute es equivalente a que lo haga el rectángulo exterior en el diagrama

\begin{equation} \label{morfreprdesdeV}
\xymatrix @C=4pc { V \ar[r]^{u \otimes V} \ar[d]_{\varphi} & A \otimes V \ar[r]^{\theta_A(a)} \ar[d]^{A \otimes \varphi} & A \otimes V \ar[d]^{A \otimes \varphi} \\
								W \ar[r]^{u \otimes W} & A \otimes W \ar[r]^{\theta_A'(a)} & A \otimes W }
\end{equation}

\begin{\de}
	De esta forma queda definida la categoría $Rep_0(G)$ de las representaciones del objeto grupo $G$ en los objetos de $\Vat_0$. Dejamos por verificar que los morfismos de representaciones son cerrados por composición.
\end{\de}

\subsection{El segundo teorema}

La siguiente propiedad nos permite ver un paralelo entre la teoría de \linebreak Tannaka como fue expuesta en esta tesis y la teoría de Galois de Grothendieck.

\begin{\prop}
	Sea $B$ un álgebra de Hopf en $\Vat$, $V$ un objeto de $\Vat_0$, y \linebreak $G=Spec(B)$ el objeto grupo en ${Alg_\Vat}^{op}$. Es equivalente tener una estructura de $B$-comódulo a izquierda en $V$ que tener una representación a derecha de $G$ en $V$. Es equivalente también tener un morfismo entre los comódulos que entre las representaciones. Esto nos da un isomorfismo entre las categorías $Comod_0(B)$ y $Rep_0(G)$.
\end{\prop}

\begin{proof}
	Mirando una representación $\theta: G \Rightarrow GL_V$ como tranformación natural entre los funtores subyacentes $G, GL_V \in \Ens^{Alg_\Vat}$, esta equivale por Yoneda a $\theta \in GL_V(B) = Aut_B(B \otimes V)$. $\theta$ queda entonces definida por su restricción $\rho: V \rightarrow B \otimes V$. Veremos que el hecho de que la transformación natural $\theta$ respete la unidad y la multiplicación de los objetos grupo es equivalente a que el coproducto escalar $\rho$ respete la counidad y la comultiplicación de $B$.
	
	Recordemos que $\theta$ se obtiene según $\theta = \theta_B(id_B)$, y que a partir del $\theta \in GL_V(B)$ se recupera la transformación natural según \begin{equation} \label{recuptheta} \theta_A(B \stackrel{f}{\rightarrow} A) = GL_V(f)(\theta) \end{equation}
	
	Que $\theta$ preserve la unidad es que $\theta_A(B \stackrel{\varepsilon}{\rightarrow} I \stackrel{u}{\rightarrow} A) = id_{A \otimes V}$, o bien en virtud de \eqref{recuptheta} que $GL_V(B \stackrel{\varepsilon}{\rightarrow} I \stackrel{u}{\rightarrow} A)(\theta) = id_{A \otimes V}$.
	
	Por otro lado, que $\rho$ respete la counidad es que el diagrama 
	
	\ \ \ \ \ \ \ \ \ \ \ \ \xymatrix { I \otimes V & B \otimes V \ar[l]_{\varepsilon \otimes V} \\
							I \otimes V \ar[u]^{I \otimes V} & B \otimes V \ar[u]_{\theta} \ar[l]_{\varepsilon \otimes V} \\
								& V \ar[lu]^{u \otimes V = \cong} \ar[u]_{u \otimes V} \ar@/_3pc/[uu]^{\rho} }
	
	conmute, lo cual equivale a que $GL_V (B \stackrel{\varepsilon}{\rightarrow} I) = id_{I \otimes V}$. Pero como el cuadrado 
	\xymatrix@1 { I \otimes V \ar[r]^{I \otimes V} \ar[d]_{u \otimes V} & I \otimes V \ar[d]^{u \otimes V} \\
								A \otimes V \ar[r]^{A \otimes V} & A \otimes V }
	siempre conmuta, ambas condiciones son equivalentes.
	
	Que $\theta$ preserve la multiplicación es que para todo par de flechas \linebreak $f,g: B \rightarrow A$ se verifique $\theta_A(f) \circ \theta_A(g) = \theta_A(m \circ (g \otimes f) \circ \triangle)$, o bien en virtud de \eqref{recuptheta} que $GL_V(f)(\theta) \circ GL_V(g)(\theta) = GL_V(m \circ (g \otimes f) \circ \triangle) (\theta)$.
	
	Aplicado a $g = B \otimes u$ y $f = u \otimes B$ las inclusiones de $B$ en $B \otimes B$, obtenemos que $GL_V(\triangle)(\theta) = GL_V (B \stackrel{u \otimes B}{\rightarrow} B \otimes B)(\theta) \circ GL_V (B \stackrel{B \otimes u}{\rightarrow} B \otimes B)(\theta)$. Para ver quiénes son $GL_V$ de estas inclusiones, miramos los diagramas conmutativos
	
	1. \xymatrix { B \otimes V \ar[r]^{\theta} \ar[d]_{u \otimes B \otimes V} & B \otimes V \ar[d]^{u \otimes B \otimes V} \\
							B \otimes B \otimes V \ar[r]^{B \otimes \theta} & B \otimes B \otimes V }
	\ \ y
	
	2. \xymatrix @C=2.5pc { B \otimes V \ar[rrr]^{\theta} \ar[d]_{B \otimes u \otimes V} & & & B \otimes V \ar[d]^{B \otimes u \otimes V} \\
							B \otimes B \otimes V \ar[r]^{B \otimes \psi} & B \otimes V \otimes B \ar[r]^{\theta \otimes B} & B \otimes V \otimes B \ar[r]^{B \otimes \psi} & B \otimes B \otimes V }

	La conmutatividad del segundo diagrama se justifica con las igualdades
	
\begin{equation}\label{thetatilde}
\xymatrix@C=-0.2pc{ B \ar@2{-}[d] & & \ar@{-}[ld] \ar@{-}[rd] \ar@{}[d]|{u} & & V \ar@2{-}[d]&   & B \ar@{-}[d] & \ar@{}[d]|{\theta} & V \ar@{-}[d] &    & & & & B \ar@{-}[d] & & \ar@{}[d]|{\theta} & & V \ar@{-}[d]   \\
								 	  B \ar@2{-}[d] & & B \ar@2{-}[rrd]|{\psi} & & V \ar@2{-}[lld]|{\psi} 	   & \ \ \ \ = \ \ \ \ & B \ar@2{-}[d] & & V \ar@2{-}[d] & & \ar@{-}[ld] \ar@{-}[rd] \ar@{}[d]|{u} & & \ \ \ \ = \ \ \ \ & B \ar@2{-}[d] & & \ar@{-}[ld] \ar@{-}[rd] \ar@{}[d]|{u} & & V \ar@2{-}[d]  \\							 
								    B \ar@{-}[d] & \ar@{}[d]|{\theta} & V \ar@{-}[d] & & B \ar@2{-}[d] 		   &   & B \ar@2{-}[d] & & B \ar@2{-}[rrd]|{\psi} & & V \ar@2{-}[lld]|{\psi} &    & & B & & B & & V & & \\							 
								    B \ar@2{-}[d] & & V \ar@2{-}[rrd]|{\psi} & & B \ar@2{-}[lld]|{\psi} 	   &   & B & & B & & V & & & & \\							 
								    B & & B & & V 																													 &   & & & & & }
\end{equation}
	
	que valen por la naturalidad de $\psi$. Luego obtenemos el triángulo conmutativo
	
	\ \ \ \ \ \ \ \ \ \ \ \ \ \ \ \ \ \xymatrix {B \otimes B \otimes V \ar[rr]^{GL_V (\triangle)(\theta)} \ar[dr]_{\tilde{\theta} } & & B \otimes B \otimes V \\
									& B \otimes B \otimes V \ar[ur]_{B \otimes \theta} }

	donde $\tilde{\theta}= (B \otimes \psi) \circ (\theta \otimes B) \circ (B \otimes \psi)$.
	
	Luego el diagrama
	
	\ \ \ \ \ \ \ \ \ \ \ \ \ \ \ \ \ \xymatrix { V \ar[r]^{u \otimes V} \ar@/^2pc/[rrr]^{\rho} \ar[dr]_{u \otimes u \otimes B} & B \otimes V \ar[rr]^{\theta} \ar[d]^{\triangle \otimes V} & & B \otimes V \ar[d]^{\triangle \otimes V} \\
								& B \otimes B \otimes V \ar[rr]^{GL_V (\triangle)(\theta)} \ar[dr]_{\tilde{\theta}} & & B \otimes B \otimes V \\
								& & B \otimes B \otimes V \ar[ur]_{B \otimes \theta} }

	es conmutativo. Verifiquemos que la composición $$V \stackrel{u \otimes u \otimes V}{\rightarrow} B \otimes B \otimes V \stackrel{\tilde{\theta}}{\rightarrow} B \otimes B \otimes V \stackrel{B \otimes \theta}{\rightarrow} B \otimes B \otimes V$$
coincide con $(B \otimes \rho) \circ \rho$, luego obtendremos que $\rho$ respeta la comultiplicación de $B$.
	En efecto, esta composición es

\ \ \ \ \ \ \ \ \ \ \ \ \ \ \ \ \ \xymatrix@C=-0.2pc{ & \ar@{-}[ld] \ar@{-}[rd] \ar@{}[d]|{u} & & \ar@{-}[ld] \ar@{-}[rd] \ar@{}[d]|{u} & & V \ar@2{-}[d] & & 		& \ar@{-}[ld] \ar@{-}[rd] \ar@{}[d]|{u} & & & & V \ar@2{-}[d] 		 & & & & & & V \ar@{-}[lllld] \ar@{}[lld]|{\rho} \ar@{-}[d] \\
								 & B \ar@2{-}[d] & & B \ar@2{-}[rrd]|{\psi} & & V \ar@2{-}[lld]|{\psi} & \ \ \ \ = \ \ \ \ & 			& B \ar@{-}[d] & & \ar@{}[d]|{\theta} & & V \ar@{-}[d] 			& \ \ \ \ = \ \ \ \ & B \ar@2{-}[d] & & & & V \ar@{-}[lld] \ar@{}[ld]|{\rho} \ar@{-}[d] \\							 
								 & B \ar@{-}[d] & \ar@{}[d]|{\theta} & V \ar@{-}[d] & & B \ar@2{-}[d] & & 			& B \ar@2{-}[d] & & \ar@{-}[ld] \ar@{-}[rd] \ar@{}[d]|{u} & & V \ar@2{-}[d]  		& & B & & B & & V \\							 
								 & B \ar@2{-}[d] & & V \ar@2{-}[rrd]|{\psi} & & B \ar@2{-}[lld]|{\psi} & & 			& B \ar@2{-}[d] & & B \ar@{-}[d] & \ar@{}[d]|{\theta} & V \ar@{-}[d] \\							 
								 & B \ar@2{-}[d] & & B \ar@{-}[d] & \ar@{}[d]|{\theta} & V \ar@{-}[d] & & 			& B & & B & & V  \\	
								 & B & & B & & V & & 					 }

	donde la primer igualdad vale por \eqref{thetatilde} y la segunda porque $\rho$ es la restricción de $\theta$ a $V$.
	
	
	Por último, si $\rho$ respeta la comultiplicación de $B$, realizando el camino inverso llegamos a que 
	$$GL_V(\triangle)(\theta) = GL_V (B \stackrel{u \otimes B}{\rightarrow} B \otimes B)(\theta) \circ GL_V (B \stackrel{B \otimes u}{\rightarrow} B \otimes B)(\theta)$$
	Pero como para todo par de flechas $f,g: B \rightarrow A$ vale que $f = m \circ (g \otimes f) \circ (u \otimes B)$ y $g = m \circ (g \otimes f) \circ (B \otimes u)$, usando la funtorialidad de $GL_V$ y la igualdad \eqref{GLV} obtenemos
\begin{eqnarray*}
  & GL_V(f)(\theta) \circ GL_V(g)(\theta) = & \\
  & = GL_V(m \circ (g \otimes f) \circ (u \otimes B))(\theta) \circ GL_V(m \circ (g \otimes f) \circ (B \otimes u))(\theta) = & \\
	& = GL_V(m \circ (g \otimes f))(GL_V(u \otimes B)(\theta)) \circ GL_V(m \circ (g \otimes f)) (GL_V(B \otimes u)(\theta)) = & \\
	& = GL_V(m \circ (g \otimes f)) (GL_V(u \otimes B)(\theta) \circ GL_V(B \otimes u)(\theta) ) = & \\
	& = GL_V(m \circ (g \otimes f))(GL_V(\triangle)(\theta)) = GL_V (m \circ (g \otimes f) \circ \triangle) (\theta)
\end{eqnarray*}

y obtenemos que $\theta$ preserva (de forma contravariante) la multiplicación. Pasamos ahora a los morfismos. Sean $\theta: G \Rightarrow GL_V$, $\theta': G \Rightarrow GL_W$ dos representaciones; $\rho: V \rightarrow B \otimes V$, $\rho': W \rightarrow B \otimes W$ las estructuras de $B$-comódulo correspondientes y $\varphi: V \rightarrow W$ una flecha en $\Vat$. Si $f$ es un morfismo de representaciones, entonces como $\theta = \theta_B(id_B)$ y $\theta' = \theta'_B(id_B)$ en virtud de \eqref{morfreprdesdeV} el diagrama 

\ \ \ \ \ \ \ \ \ \ \ \ \ \ \ \ \ \ \ \ \ \ \xymatrix @C=4pc { V \ar@/^2pc/[rr]_{\rho} \ar[r]^{u \otimes V} \ar[d]_{\varphi} & B \otimes V \ar[r]^{\theta} \ar[d]^{B \otimes \varphi} & B \otimes V \ar[d]^{B \otimes \varphi} \\
								W \ar@/_2pc/[rr]^{\rho} \ar[r]^{u \otimes W} & B \otimes W \ar[r]^{\theta'} & B \otimes W }

\vspace{2ex}
								
es conmutativo y por lo tanto $f$ es morfismo de $B$-comódulos. Recíprocamente, si $f$ es morfismo de $B$-comódulos sea $a: B \rightarrow A$ un morfismo de álgebras y veamos que el rectángulo exterior conmuta en el diagrama \eqref{morfreprdesdeV}. Para ello necesitamos considerar el siguiente diagrama tridimensional

\vspace{1ex}

\ \ \ \ \ \ \ \ \ \ \xymatrix { & & & A \otimes V \ar[rr]^{\theta_A(a)} \ar[dd]^>>>>{A \otimes \varphi} & & A \otimes V \ar[dd]^{A \otimes \varphi} \\
						V \ar[rr]^{u \otimes V} \ar[dd]_{\varphi} & & B \otimes V \ar@{-}[r]^{\theta} \ar[ur]^{a \otimes V} \ar[dd]_{B \otimes \varphi} & \ar@{-->}[r] & B \otimes V \ar@{--}[d] \ar@{-->}[ur]^{a \otimes V} \\
						& & & A \otimes W \ar[rr]^>>>>{\theta'_A(a)} & \ar[d]_{B \otimes \varphi} & A \otimes W \\
						W \ar[rr]^{u \otimes W} & & B \otimes W \ar[rr]^{\theta'} \ar[ur]^{a \otimes W} & & B \otimes W \ar[ur]_{B \otimes \varphi} }

\vspace{1ex}

En el cubo, las caras laterales son conmutativas: la izquierda y la derecha trivialmente y las otras dos por \eqref{recuptheta}. Es conveniente numerar las flechas del diagrama para poder hacer referecia a ellas:
	
\ \ \ \ \ \ \ \ \ \ \xymatrix { & & & A \otimes V \ar[rr]^{3} \ar[dd]^>>>>{} & & A \otimes V \ar[dd]^{4} \\
						V \ar[rr]^{1} \ar[dd]_{5} & & B \otimes V \ar@{-}[r]^{9} \ar[ur]^{2} \ar[dd]_{} & \ar@{-->}[r] & B \otimes V \ar@{--}[d] \ar@{-->}[ur]^{10} \\
						& & & A \otimes W \ar[rr]^>>>>{8} & \ar[d]_{11} & A \otimes W \\
						W \ar[rr]^{6} & & B \otimes W \ar[rr]^{13} \ar[ur]^{7} & & B \otimes W \ar[ur]_{12} }

\vspace{1ex}
						
Entonces tenemos la cadena de igualdades (que se lee en el orden opuesto al usual para la composición)
$$1\hbox{-}2\hbox{-}3\hbox{-}4 \ = \ 1\hbox{-}9\hbox{-}10\hbox{-}4 \ = \ 1\hbox{-}9\hbox{-}11\hbox{-}12 \ = \ 5\hbox{-}6\hbox{-}13\hbox{-}12 \ = \ 5\hbox{-}6\hbox{-}7\hbox{-}8$$
que nos dice que en el diagrama

\vspace{1ex}

\ \ \ \ \ \ \ \ \ \ \ \ \ \ \ \ \xymatrix { V \ar@/^2pc/[rr]_{u \otimes V} \ar[r]^{u \otimes V} \ar[d]_{\varphi} & B \otimes V \ar[r]^{a \otimes V}	& A \otimes V	\ar[r]^{\theta_A(a)} & A \otimes V \ar[d]^{A \otimes \varphi} \\
								W \ar@/_2pc/[rr]^{u \otimes W} \ar[r]^{u \otimes W} & B \otimes W \ar[r]^{a \otimes W} & A \otimes W \ar[r]^{\theta_A'(a)} & B \otimes W }

\vspace{2ex}
								
el rectángulo es conmutativo. Pero como $a$ es morfimso de álgebras también los triángulos superior e inferior son conmutativos y obtenemos que el diagrama

\ \ \ \ \ \ \ \ \ \ \ \ \ \ \ \xymatrix { V \ar[r]^{u \otimes V} \ar[d]_{\varphi} & A \otimes V	\ar[r]^{\theta_A(a)} & A \otimes V \ar[d]^{A \otimes \varphi} \\
						W \ar[r]^{u \otimes W} & A \otimes W \ar[r]^{\theta_A'(a)} & B \otimes W }

conmuta y por lo tanto $f$ es morfismo de representaciones.
\end{proof}

Hemos conseguido entonces el siguiente teorema de levantamiento para $F$ equivalente al de la sección \ref{sec:levdeF}, expresado ahora de forma análoga al enunciado 2 de Galois (ver sección \ref{sec:introGal})

\begin{\te} \label{teofinal}
	Sean $\Cat$ una categoría tensorial con una dualidad a derecha, $\Vat$ una categoría tensorial simétrica cocompleta con hom internos, $\Vat_0$ una subca- \linebreak tegoría de $\Vat$ en la cual todo objeto tiene un dual, y $F: \Cat \rightarrow \Vat_0$ un funtor tensorial. Entonces se tiene el levantamiento
	
	\ \ \ \ \ \ \ \ \ \ \ \ \xymatrix {  & Rep_0(G) \ar[d]^{U} \\
							{\Cat} \ar[ur]^{\tilde{F}} \ar[r]^{F} & \Vat_0 }
							
\hspace{1ex}

	Notar que $G=Spec(End^\lor(F))$ ocupa el lugar que $Aut(F)$ ocupa en la teoría de Galois.
	
	Además, en el caso $\Vat = Vec_K$, y $\Cat$ y $F$ enriquecidos, si $\Cat$ es abeliana y $F$ es exacto y fiel entonces $\tilde{F}$ es una equivalencia de categorías.
\end{\te}


\nocite{*}

\pagebreak
\bibliographystyle{unsrt}
\bibliography{Tesis}

\begin{thebibliography}{10}

\bibitem{JS}
A.~Joyal and R.~Street.
\newblock {An Introduction to Tannaka Duality and Quantum Groups}.
\newblock {\em Category Theory}, pages 413--492, 1990.

\bibitem{Saavedra}
N.~Saavedra Rivano.
\newblock {\em Catégories Tannakiennes}.
\newblock Springer Lecture Notes in Mathematics, volume 265, 1972.

\bibitem{DM}
P.~Deligne and J.S. Milne.
\newblock {Tannakian Categories}.
\newblock {\em Hodge Cocycles Motives and Shimura Varieties}, pages 101--228,
  1982.

\bibitem{SGA}
A.~Grothendieck.
\newblock {\em SGA I (1960-1961)}.
\newblock Springer Lecture Notes in Mathematics, volume 224, 1971.

\bibitem{JS1992}
A.~Joyal and R.~Street.
\newblock {Braided Tensor Categories}.
\newblock {\em Macquarie Mathematics Reports, Report No. 92-091}, 1992.

\bibitem{Dubuc}
E.~Dubuc and C.~S. de~la Vega.
\newblock {On the Galois Theory of Grothendieck}.
\newblock {\em Bol. Acad. Nac. Cienc. Cordoba}, pages 111--136, 2000.

\bibitem{Tannaka}
T.~Tannaka.
\newblock {Über den Dualitätssatz der nichtkommutativen topologischen Gruppen}.
\newblock {\em Tôhoku Math. J. 45}, pages 1--12, 1939.

\bibitem{ML}
S.~Mac Lane.
\newblock {\em Categories for the Working Mathematician}.
\newblock Springer-Verlag, New York, 1998.

\bibitem{sch}
P.~Schauenburg.
\newblock {Hopf Algebra Extensions and Monoidal Categories}.
\newblock {\em New Directions in Hopf Algebras, editors S. Montgomery and H.-J.
  Schneider}, pages 321--381, 2002.

\bibitem{tesisdubuc}
E.~Dubuc.
\newblock {\em Kan Extensions in Enriched Category Theory}.
\newblock Springer Lecture Notes in Mathematics, volume 145, 1970.

\bibitem{Sweedler}
M.~Sweedler.
\newblock {\em Hopf Algebras}.
\newblock W. A. Benjamin, inc. Publishers, 1969.

\bibitem{Hartshorne}
R.~Hartshorne.
\newblock {\em Algebraic Geometry}.
\newblock Springer Graduate Texts in Mathematics, volume 52, 1977.

\bibitem{Broker}
T.~Bröcker and T.~tom Dieck.
\newblock {\em Representations of Compact Lie Groups}.
\newblock Springer Graduate Texts in Mathematics, volume 98, 1985.

\bibitem{Chevalley}
C.~Chevalley.
\newblock {\em Theory of Lie Groups}.
\newblock Princeton University Press, 1946.

\end{thebibliography}
\end{document}